\theoremstyle{plain}
\newtheorem{theorem}{Theorem}[section]
\newtheorem{corollary}[theorem]{Corollary}
\newtheorem{proposition}[theorem]{Proposition}
\newtheorem{lemma}[theorem]{Lemma}
\theoremstyle{definition}
\newtheorem{definition}[theorem]{Definition}
\newtheorem{example}[theorem]{Example}
\newtheorem{notation}[theorem]{Notation}
\newtheorem{remark}[theorem]{Remark}
\newtheorem{question}[theorem]{Question}
\newcommand{{\vp}}{{\varphi}}
\newcommand{{\x}}{{\mathbf{x}}}
\newcommand{{\ii}}{{\mathbf{i}}}
\newcommand{{\jj}}{{\mathbf{j}}}
\newcommand{{\ki}}{{\mathbf{k}}}
\newcommand{{\kk}}{{\mathbb{K}}}
\newcommand{{\hh}}{{\mathbb{H}}}
\newcommand{{\rr}}{{\mathbb{R}}}
\newcommand{{\rx}}{{\mathbb{R}[\mathbf{x}]}}
\newcommand{{\kx}}{{\mathbb{K}[\mathbf{x}]}}
\newcommand{{\hx}}{{\mathbb{H}[\mathbf{x}]}}
\newcommand{{\auton}}{{automorphically normalizable}}
\begin{document}

\begin{frontmatter}

\title{Noether's normalization in skew polynomial rings}

\author[OPENU]{Elad Paran}
\ead{paran@openu.ac.il}

\author[TDTU]{Thieu N. Vo\corref{mycorrespondingauthor}}
\cortext[mycorrespondingauthor]{Corresponding author}
\ead{vongocthieu@tdtu.edu.vn}


\address[OPENU]{The Open University of Israel}

\address[TDTU]{Fractional Calculus, Optimization and Algebra Research Group, Faculty of Mathematics and Statistics, Ton Duc Thang University, Ho Chi Minh City,
Vietnam}


\begin{abstract}
We study Noether's normalization lemma for finitely generated algebras over a division algebra.
In its classical form, the lemma states that if $I$ is a proper ideal of the ring $R=F[t_1,\ldots,t_n]$ of polynomials over a field $F$, then the quotient ring $R/I$ is a finite extension of a polynomial ring over $F$.
We prove that the lemma holds when $R=D[t_1,\ldots,t_n]$ is the ring of polynomials in $n$ central variables over a division algebra $D$. We provide examples demonstrating that Noether's normalization may fail for the skew polynomial ring $D[t_1,\ldots,t_n;\sigma_1,\ldots,\sigma_n]$ with respect to commuting automorphisms $\sigma_1,\ldots,\sigma_n$ of $D$.
We give a sufficient condition for $\sigma_1,\ldots,\sigma_n$ under which the normalization lemma holds for such ring.  In the case where $D=F$ is a field, this sufficient condition is proved to be necessary.

\end{abstract}

\end{frontmatter}

\section{Introduction}\label{sec:intro}

Let $D$ be a division algebra, and suppose that $R$ is a quotient ring of the polynomial ring $D[x_1,\ldots,x_n]$ in $n$ central variables over $D$ by a proper two-sided ideal $I$. We shall call such a ring $R$ a {\it centrally finitely generated} extension of $D$. The motivating question for this paper is the following: What can be said of the structure of such rings? In the case where $D$ is a field, this is a classical question, and the answer is given by Noether's normalization lemma \cite[Theorem 13.3]{Eisenbud2013}, which states that $R$ itself must be a finite extension of a polynomial ring over $D$.

If $D$ is an arbitrary division algebra and $R$ is a simple (left) $D[x_1,\ldots,x_n]$-module (equivalently, $R$ is a quotient module of $D[x_1,\ldots,x_n]$ by a maximal left ideal $I$), a theorem of Amitsur and Small \cite[Theorem 1]{AmitsurSmall78} states that $R$ is finite over a polynomial ring in central variables over $D$. Our first observation is closely related, stating the following generalization of Noether's lemma:

\begin{theorem}\label{main1} Let {\color{black} $D$ be a division algebra and} $R = D[x_1,\ldots,x_n]/I$ be a centrally finitely generated extension of $D$. Then $R$ is finite as a left module over a subring which is isomorphic to a polynomial ring in central variables over $D$. \end{theorem}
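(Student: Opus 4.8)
The plan is to reduce the statement to the classical Noether normalization over the center of $D$ and then transport the result to $R$, isolating the only genuinely noncommutative input: a description of the two-sided ideals of $D[x_1,\ldots,x_n]$.

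First I would set $Z=Z(D)$, which is a field. Since the variables are central, $D[x_1,\ldots,x_n]=D\otimes_Z Z[x_1,\ldots,x_n]$ and its center is the commutative polynomial ring $Z[x_1,\ldots,x_n]$. As $I$ is a proper two-sided ideal, $J:=I\cap Z[x_1,\ldots,x_n]$ is a proper ideal of $Z[x_1,\ldots,x_n]$, so the commutative Noether normalization lemma supplies elements $y_1,\ldots,y_d\in Z[x_1,\ldots,x_n]$ whose images $\bar y_i$ are algebraically independent over $Z$ and over which $\bar A:=Z[x_1,\ldots,x_n]/J$ is a finitely generated module, say generated by the classes of monomials $m_1,\ldots,m_s$. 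Because the $x_i$ remain central in $R$, each $\bar y_i$ lies in the center of $R$, and $\bar A$ embeds into $R$ through $Z[x_1,\ldots,x_n]/(I\cap Z[x_1,\ldots,x_n])\hookrightarrow D[x_1,\ldots,x_n]/I$.

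Next I would transfer finiteness. For each monomial $x^\alpha$ there are $p_{\alpha j}\in Z[y_1,\ldots,y_d]$ with $x^\alpha-\sum_j p_{\alpha j}(y)\,m_j\in J\subseteq I$; reducing modulo $I$ and collecting the left $D$-coefficients, which commute past the central $\bar y_i$, shows that $R=\sum_{j} S\,\bar m_j$ is finite as a left module over the subring $S:=D[\bar y_1,\ldots,\bar y_d]$ generated by $D$ and the $\bar y_i$. This step uses only $J\subseteq I$ and the centrality of the $y_i$, so it is unconditional.

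The crux is to prove that $S$ is genuinely a polynomial ring, i.e. that $\bar y_1,\ldots,\bar y_d$ are algebraically independent over $D$. Expanding a putative relation in a $Z$-basis $\{e_k\}$ of $D$ and using the algebraic independence of the $\bar y_i$ over $Z$ reduces this, I expect, to the main obstacle: the ideal lemma that every two-sided ideal of $D[x_1,\ldots,x_n]$ is extended from its center, $I=D\cdot(I\cap Z[x_1,\ldots,x_n])$. I would prove it by viewing $D$ as a module over $D\otimes_Z D^{\mathrm{op}}$ via $(a\otimes b)\cdot d=adb$; its submodules are exactly the two-sided ideals of $D$, so $D$ is simple with endomorphism ring precisely $Z$. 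Given $f=\sum_\alpha d_\alpha x^\alpha\in I$ with finitely many nonzero coefficients, the Jacobson density theorem produces, for each basis index $k$, elements $a_i,b_i\in D$ realizing the coordinate projection onto $e_k$ on the finite-dimensional $Z$-span of the $d_\alpha$; then $\sum_i a_i f b_i=\sum_\alpha(\sum_i a_i d_\alpha b_i)x^\alpha$ has central coefficients, equals the $k$-th coordinate $c_k$ of $f$, and lies in $I\cap Z[x_1,\ldots,x_n]=J$. Hence every coordinate of $f$ lies in $J$, giving $I=D\cdot J$. Feeding this back shows $\ker\!\big(D[Y_1,\ldots,Y_d]\to R\big)=0$, so $S\cong D[Y_1,\ldots,Y_d]$ and $R$ is finite as a left $S$-module, as required.
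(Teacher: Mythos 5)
Your proposal is correct, but it takes a genuinely different route from the paper. The paper proves this statement (its Theorem~\ref{prop:normalization_central}) by a direct induction on the number of generators, mimicking Nagata's proof of the classical lemma inside the noncommutative ring itself: given a relation $f(z_1,\ldots,z_n)=0$, the substitution $t_i \mapsto t_i + t_n^{d^{n-i}}$ with $d = 1+\deg f$ (Lemma~\ref{lem:F-finite}) makes $f$ monic in $t_n$ up to a left unit, because the $d$-adic uniqueness of exponents forces the top coefficient to be a \emph{single} coefficient of $f$, hence invertible in $D$; finiteness then propagates through Lemma~\ref{lem:finite-over-normalizable}. You instead reduce to the commutative Noether normalization over the center $Z=\mathcal{Z}(D)$ and transport it to $R$ via the ideal-correspondence lemma $I = D\cdot\big(I\cap Z[x_1,\ldots,x_n]\big)$, which you prove by Jacobson density applied to $D$ as a simple $D\otimes_Z D^{\mathrm{op}}$-module with endomorphism ring $Z$. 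Your density argument is valid (it does not require $D$ to be finite-dimensional over $Z$), the finiteness transfer is unconditional as you say, and the algebraic independence of the $\bar y_i$ over $D$ does follow, since $I = D\cdot J$ together with the freeness of $D$ over $Z$ gives $R \cong D\otimes_Z \big(Z[x_1,\ldots,x_n]/J\big)$, so a left $D$-relation among the $\bar y_i$ decomposes along a $Z$-basis of $D$ into relations over $Z$. What your approach buys is a stronger structural conclusion -- every centrally finitely generated extension of $D$ is a scalar extension of a commutative affine $Z$-algebra, with the normalization inherited verbatim from the commutative case; what the paper's approach buys is self-containedness (no density theorem, no structure theory of ideals) and, more importantly, a template that survives when centrality is dropped: the same induction with Lemma~\ref{lem:poly_f} in place of Lemma~\ref{lem:F-finite} yields Theorem~\ref{thm:normalization_one_auto} for skew polynomial rings $D[t_1,\ldots,t_n;\sigma]$, a setting where your reduction to the center breaks down because the variables are no longer central and the center of the skew ring is too small to carry the argument.
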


Let us emphasize that in the mentioned theorem of Amitsur-Small, the ideal $I$ is one-sided, but maximal, while in Theorem \ref{main1} the ideal $I$ is an arbitrary two-sided ideal.



Next, we consider a more general situation: Given commuting\footnote{Throughout this work, whenever we refer to a sequence of objects as ``commuting," we mean ``pairwise commuting".} automorphisms $\sigma_1,\ldots,\sigma_n$ of a division algebra $D$, we denote by $D[t_1,\ldots,t_n ; \sigma_1,\ldots,\sigma_n]$ the corresponding multivariate skew polynomial ring (see \S\ref{sec:prelim} below for a brief survey of basic facts concerning such rings). We say that a ring extension $S$ of $D$ is {\it automorphically finitely generated} if $S$ is a quotient ring of $D[t_1,\ldots,t_n ; \sigma_1,\ldots,\sigma_n]$ by a proper two-sided ideal for some commuting automorphisms $\sigma_1,\ldots,\sigma_n$ of $D$. The main goal of this paper is to study the structure of such rings. To that end, we introduce the following terminology:

\begin{definition} Let $D$ be a division algebra.
\begin{enumerate}
    
\item We say that a ring extension $S$ of $D$ is \textbf{automorphically normalizable} over $D$ if $S$ is finite as a left module over a subring which is isomorphic to $D[t_1,\ldots,t_n;\sigma_1,\ldots,\sigma_n]$, for some commuting automorphisms $\sigma_1,\ldots,\sigma_n$ of $D$. If these automorphisms can all be chosen as the identity, we say that $S$ is \textbf{centrally normalizable} over $D$.

\item A tuple $(\sigma_1,\ldots,\sigma_n)$ of commuting automorphisms in $\mathbf{Aut}(D)$ is called \textbf{automorphically} (resp. \textbf{centrally}) \textbf{normalizable} over $D$ if every quotient ring of $D[t_1,\ldots,t_n;\sigma_1,\ldots,\sigma_n]$ is automorphically (resp. centrally) normalizable over $D$.
\end{enumerate}
\end{definition}

Using this terminology, Theorem \ref{main1} above states that every centrally finitely generated extension over a division algebra $D$ is centrally normalizable over $D$. In the case where $D$ is centrally finite (that is, of finite dimension over its center), we deduce from Proposition~\ref{main} that Noether's normalization holds for arbitrary subrings of polynomial rings that are finitely generated over $D$:

\begin{theorem}[{{\color{black} See Proposition~\ref{main} below}}]\label{main_centrally_finite} Let $D$ be a centrally finite division algebra and
let $R=D[t_1,\ldots,t_n]$ be the ring of polynomials in $n$ central variables over $D$.
Then for every finite subset $A$ of $R$, the ring $S=D[A]$ generated by $A$ over $D$ is centrally normalizable over $D$. 
\end{theorem}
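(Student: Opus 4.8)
The plan is to exploit the fact that, since $D$ is centrally finite, the ambient ring $R$ is module-finite over its center, and to reduce the statement to the structure theory of affine PI-algebras. Write $F=Z(D)$ and $d=[D:F]<\infty$, fix an $F$-basis $e_1=1,e_2,\dots,e_d$ of $D$, and set $Z:=Z(R)$. A direct computation shows $Z=F[t_1,\dots,t_n]$ and that $R$ is a free $Z$-module with basis $e_1,\dots,e_d$; in particular $R$ is a Noetherian domain that is finite over the commutative affine (finitely generated) $F$-algebra $Z$, hence $R$ satisfies a polynomial identity. The subring $S=D[A]$ is then an \emph{affine} $F$-algebra (generated by the finite set $\{e_1,\dots,e_d\}\cup A$), it is a domain (being a subring of the domain $R$), and it satisfies a polynomial identity inherited from $R$. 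These three properties are what I will feed into the machinery.

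The key reduction is a small but decisive lemma: $Z(S)=S\cap Z$, and in particular $Z(S)\subseteq Z=F[t_1,\dots,t_n]$. Indeed, since the $t_i$ are central in $R$, an element $r=\sum_\alpha d_\alpha t^\alpha\in R$ commutes with all of $D$ if and only if every coefficient $d_\alpha$ lies in $Z(D)=F$, that is, if and only if $r\in Z$; as every element of $Z(S)$ commutes with $D\subseteq S$, it must lie in $Z$, and the reverse inclusion $S\cap Z\subseteq Z(S)$ is trivial. This lemma guarantees that any element of $S$ which is central in $S$ is automatically central in all of $R$, which is exactly what is needed later to recognize the normalizing subring as a genuine polynomial ring.

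Next I would establish the crucial finiteness statement: \emph{$S$ is a finitely generated left module over an affine subalgebra $C\subseteq Z(S)$}. This is where I expect the main difficulty to lie, and it is the precise point at which the centrally-finite hypothesis is used. Every element of $R$, in particular each generator of $S$, is integral over $Z$, since $R$ is free of finite rank over $Z$ (one may take the characteristic polynomial of the left-regular representation, which is monic over $Z$). Because the generators of $S$ do not commute, arbitrarily long words in them must be controlled; this is handled by Shirshov's Height Theorem, which bounds the number of distinct ``blocks'' that can occur, while integrality over the center bounds the exponents within each block. Combining this with the Artin--Tate lemma lets one descend the finitely many structure constants into an affine subalgebra of the center, producing an affine $C\subseteq Z(S)$ over which $S$ is module-finite. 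Equivalently, one invokes the standard structure theorem for prime affine PI-algebras: such an algebra is a finite module over an affine subalgebra of its center. (Note that this step genuinely fails without the centrally-finite hypothesis, as then $R$ is not PI; this is consistent with the examples in the paper where normalization breaks down.)

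Finally I would assemble the conclusion. Applying classical Noether normalization to the affine commutative domain $C$ over $F$ yields elements $z_1,\dots,z_r\in C$, algebraically independent over $F$, with $C$ finite over $F[z_1,\dots,z_r]$. Then $S$ is finite over $F[z_1,\dots,z_r]$, and since $D\subseteq S$ and each $z_i\in C\subseteq Z(S)\subseteq S$, we have $D[z_1,\dots,z_r]\subseteq S$ and the same finite family of module generators shows $S$ is finite as a left module over $D[z_1,\dots,z_r]$. By the lemma the $z_i$ lie in $Z$, so they commute with $D$ and with one another; using the freeness of $R$ over $Z$ together with the algebraic independence of the $z_i$ over $F$, one checks that the surjection $D[Y_1,\dots,Y_r]\to D[z_1,\dots,z_r]$ sending the central variables $Y_i\mapsto z_i$ has trivial kernel, so that $D[z_1,\dots,z_r]$ is isomorphic to a polynomial ring in $r$ central variables over $D$. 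Thus $S$ is finite as a left module over a subring isomorphic to $D[z_1,\dots,z_r]$, i.e. $S$ is centrally normalizable over $D$, which is the desired conclusion.
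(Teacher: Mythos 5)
Your reduction has a genuine gap at its pivotal step. The claim you invoke --- that a prime affine PI-algebra over a field is a finite module over an affine subalgebra of its center --- is not a theorem: the algebra of two generic $2\times 2$ matrices is a prime affine PI domain that is \emph{not} a finite module over (any affine subalgebra of) its center; one must pass to its trace ring to obtain such finiteness. Your Shirshov/Artin--Tate sketch does not repair this, because it conflates integrality over $Z=\mathcal{Z}(R)=F[t_1,\ldots,t_n]$ with integrality over $\mathcal{Z}(S)=S\cap Z$. The characteristic polynomials of words in the generators of $S$ are indeed monic with coefficients in $Z$, but nothing in your argument places those coefficients inside $S$. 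Adjoining them produces an affine commutative $C_0\subseteq Z$ such that $C_0[S]$ is module-finite over $C_0$, but since $C_0\not\subseteq S$, the ring $S$ is not even a $C_0$-module, and module-finiteness does not descend to the subalgebra $S$ (a result such as Schelter's on integral affine PI extensions would apply only if one already knew $S$ integral over a Noetherian subring of $\mathcal{Z}(S)$, which is the point at issue). So the existence of an affine $C\subseteq \mathcal{Z}(S)$ with $S$ finite over $C$ --- the hard content of the theorem --- is assumed rather than proved.

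This is exactly the difficulty the paper resolves with Lemma~\ref{center}: writing $p=\sum_i p_i v_i$ with respect to an $F$-basis $v_1,\ldots,v_m$ of $D$, the theorem of Wilczy{\'n}ski (see also Alon--Paran) provides constants $b^i_{st}\in F$ with $p_i=\sum_{s,t}b^i_{st}\,v_s\,p\,v_t\in D[p]$; in essence, since $D$ is finite-dimensional central simple over $F$, the two-sided multiplications $v_s(\cdot)v_t$ span $\mathrm{End}_F(D)$, so the $F$-coordinate functionals can be realized inside $D[p]$. Hence the central coordinates of each generator already lie in $S$, giving $S=D[q_1,\ldots,q_m]$ with all $q_j\in F[t_1,\ldots,t_n]$; thus $S$ is \emph{centrally} finitely generated (Corollary~\ref{cor_center}), and the paper concludes by Theorem~\ref{prop:normalization_central}. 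Once this lemma is in hand, your own assembly would in fact work: applying classical Noether normalization to $C=F[q_1,\ldots,q_m]$ yields the $z_i$, and your closing verifications --- that $\mathcal{Z}(S)=S\cap Z$, that the monomials in the $z_i$ are left $D$-independent via the freeness of $R$ over $Z$, and that $S$ is module-finite over $D[z_1,\ldots,z_r]$ --- are all correct. The missing ingredient is precisely the coordinate-extraction lemma, which is where the hypothesis that $D$ is centrally finite does its real work.
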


The key point in the proof of this result is that such a finitely generated subring $S$ must be {\bf centrally} finitely generated, which a priori is not obvious -- we obtain this a consequence of a theorem of Wilczy{\' n}ski \cite{Wil2014}.

Noether's normalization lemma, in its classical form, can be stated as follows: \textit{The tuple $(\mathbf{id}_F,\ldots,\mathbf{id}_F)$ of finitely many identity maps of a field $F$ is centrally normalizable over $F$}. We aim to identify those tuples of automorphisms of a division algebra $D$ that are automorphically normalizable over $D$.

Given a division algebra $D$, we denote its center by $\mathcal{Z}(D)$, and given an automorphism $\sigma$ of $D$, we denote by $D_{\sigma}$ the fixed ring of $D$. We prove:

\begin{theorem}[{{\color{black} See Theorem~\ref{thm:normalization_one_auto} below}}]\label{constant_tuple}
Let $D$ be a division algebra and let $\sigma$ be an automorphism of $D$ such that the field $F=\mathcal{Z}(D) \cap D_{\sigma}$ is infinite. Then the constant tuple $(\sigma,\ldots,\sigma)$, of any finite length, is automorphically normalizable over $D$.\end{theorem}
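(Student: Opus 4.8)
\emph{Proof plan.} The plan is to argue by induction on the length $n$ of the tuple, mimicking the classical Noether normalization but working with the skew structure throughout. Write $R=D[t_1,\ldots,t_n;\sigma,\ldots,\sigma]$ and let $I\subsetneq R$ be a proper two-sided ideal; the goal is to show that $R/I$ is automorphically normalizable. The base case $n=0$ is immediate, since then $R=D$, the only proper two-sided ideal is $0$, and $D$ is trivially finite over itself. For $n\geq 1$ the case $I=0$ is also trivial, as $R/I=R$ is already the required skew polynomial ring, so I may assume $I\neq 0$ and fix a nonzero $f\in I$ of total degree $d$, with top-degree homogeneous part $f_d=\sum_{|\alpha|=d}a_\alpha t^\alpha$.

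The main device is a change of variables $t_i\mapsto t_i+c_it_n$ for $i<n$, with scalars $c_i\in F=\mathcal{Z}(D)\cap D_\sigma$. First I would check that this is a $D$-fixing automorphism of $R$: because each $c_i$ is central in $D$ and fixed by $\sigma$, it commutes with every element of $D$ and with every $t_j$, so the substitution respects both relations $t_id=\sigma(d)t_i$ and $t_it_j=t_jt_i$, and the inverse substitution $t_i\mapsto t_i-c_it_n$ shows it is bijective. Under this substitution the coefficient of $t_n^d$ becomes $g(c)=\sum_{|\alpha|=d}a_\alpha c_1^{\alpha_1}\cdots c_{n-1}^{\alpha_{n-1}}$, and I claim one can choose $c\in F^{n-1}$ with $g(c)\neq 0$: expanding each $a_\alpha$ in a basis of $D$ over $F$ reduces this to the non-vanishing of a nonzero polynomial over the infinite field $F$, which is standard. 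This is precisely the point where the hypothesis that $F$ is infinite is used, and it is the only real obstacle; once the substitution is known to be an automorphism, everything else is formal.

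After applying this automorphism, which preserves the isomorphism type of $R/I$, I may assume $f=ut_n^d+(\text{lower }t_n\text{-degree})$ with $u=g(c)\in D^\times$. Then $u^{-1}f\in I$ is monic, say $\tilde f=t_n^d+\sum_{k<d}b_kt_n^k\in I$ with $b_k\in R'$, where $R'=D[t_1,\ldots,t_{n-1};\sigma,\ldots,\sigma]$. Viewing $R=R'[t_n;\bar\sigma]$ as an Ore extension, with $\bar\sigma$ extending $\sigma$ and fixing $t_1,\ldots,t_{n-1}$, the relation $t_n^d\equiv-\sum_{k<d}b_kt_n^k\pmod I$ together with $t_n^k b=\bar\sigma^k(b)t_n^k$ for $b\in R'$ lets me rewrite any element of $R/I$ in $t_n$-degree $<d$. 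Hence $1,t_n,\ldots,t_n^{d-1}$ generate $R/I$ as a left module over the image of $R'$, so $R/I$ is finite over $R'/(R'\cap I)$.

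Finally, $R'\cap I$ is a proper two-sided ideal of $R'$, which is the skew polynomial ring on the constant tuple $(\sigma,\ldots,\sigma)$ of length $n-1$ over the same infinite field $F$. By the induction hypothesis $R'/(R'\cap I)$ is automorphically normalizable, i.e.\ finite as a left module over a subring $T$ isomorphic to some $D[s_1,\ldots,s_m;\tau_1,\ldots,\tau_m]$. Since finiteness of left modules is transitive along $T\subseteq R'/(R'\cap I)\subseteq R/I$, the quotient $R/I$ is finite over $T$ and therefore automorphically normalizable, which completes the induction.
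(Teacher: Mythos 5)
Your proof is correct, and its skeleton is the same as the paper's proof of Theorem~\ref{thm:normalization_one_auto}: induction on $n$; the linear substitution $t_i \mapsto t_i + c_i t_n$ with $c_i \in F=\mathcal{Z}(D)\cap D_\sigma$, whose compatibility with the skew relations rests exactly on the $c_i$ being central and $\sigma$-fixed (the paper's Lemma~\ref{lem:ring_linear_subsituting}); a choice of $c$ making the transformed polynomial monic in $t_n$ up to a left unit (the paper's Lemma~\ref{lem:poly_f}); module-finiteness of the quotient over the $(n-1)$-variable subring; and the induction hypothesis combined with transitivity of finiteness (the paper's Lemma~\ref{lem:finite-over-normalizable}). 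There are two points of divergence. The minor one: you work with the two-sided ideal $I$ and a $D$-fixing automorphism of $R$, replacing $I$ by its image, whereas the paper works with generators $z_i$ of the quotient and passes to $\tilde z_i = z_i - b_i z_n$, needing the substitution only as a homomorphism; these formulations are interchangeable. The more substantive one: to get non-vanishing of the leading coefficient $g(c)=\sum_{|\alpha|=d} a_\alpha c_1^{\alpha_1}\cdots c_{n-1}^{\alpha_{n-1}}$ at some $c \in F^{n-1}$, the paper dehomogenizes and invokes the combinatorial Nullstellensatz over division rings \cite{Paran2023} (via Lemmas~\ref{comb}--\ref{lem:proj_f}), while you expand the coefficients $a_\alpha$ in an $F$-basis of $D$ (legitimate, since $F \subseteq \mathcal{Z}(D)$ makes $D$ an $F$-vector space) and reduce to the standard fact that a nonzero commutative polynomial over an infinite field has a non-vanishing point. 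Your reduction is self-contained and more elementary, and it also works directly with the field $F$, sidestepping a small imprecision in the paper, where Lemmas~\ref{lem:affine_f} and~\ref{lem:proj_f} are stated for points of $\mathcal{Z}(D)$ but applied to points of the possibly smaller field $\mathcal{Z}(D)\cap D_\sigma$ (harmless, since Lemma~\ref{comb} permits arbitrary finite subsets of the center, but still worth noting); what the paper's route buys in exchange is quantitative control on how large the evaluation sets must be, which this theorem does not need.
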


As consequence of Theorem \ref{constant_tuple}, we have the following corollary:

\begin{corollary}[{{\color{black} See Corollary~\ref{cor:normalization_cyclic_positive} below}}]\label{finite_orders} Let $D$ be a division algebra and let $\sigma_1,\ldots,\sigma_n$ be commuting automorphisms of $D$ such that $F=\mathcal{Z}(D) \cap \big( \cap_{i=1}^n D_{\sigma_i} \big)$ is infinite. If $\sigma_1^{d_1}=\ldots=\sigma_n^{d_n}$ for some positive integers $d_1,\ldots,d_n$, then the tuple $(\sigma_1,\ldots,\sigma_n)$ is automorphically normalizable over $D$. In particular, if $\sigma_1,\ldots,\sigma_n$ are of finite orders, then the tuple $(\sigma_1,\ldots,\sigma_n)$ is centrally normalizable over $D$.
\end{corollary}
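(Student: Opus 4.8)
The plan is to deduce the statement from Theorem~\ref{constant_tuple} by replacing the skew variables $t_1,\ldots,t_n$ with suitable powers, so that a single repeated automorphism governs the resulting subring. Set $R := D[t_1,\ldots,t_n;\sigma_1,\ldots,\sigma_n]$, write $\tau := \sigma_1^{d_1} = \cdots = \sigma_n^{d_n}$ for the common value, and put $s_i := t_i^{d_i}$. From $t_i a = \sigma_i(a)t_i$ one gets $s_i a = \sigma_i^{d_i}(a) s_i = \tau(a) s_i$ for all $a \in D$, and since the $t_i$ commute pairwise, the $s_i$ commute with one another and with every $t_j$. First I would verify that the subring $R' := D[s_1,\ldots,s_n]$ of $R$ is isomorphic to the constant-tuple ring $D[u_1,\ldots,u_n;\tau,\ldots,\tau]$: the displayed relations exhibit $R'$ as a homomorphic image of that ring, and the monomials $s_1^{k_1}\cdots s_n^{k_n} = t_1^{d_1 k_1}\cdots t_n^{d_n k_n}$ are $D$-linearly independent in $R$, so the natural surjection is injective.

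Next I would establish that $R$ is finite as a left $R'$-module. Writing $a_i = d_i q_i + e_i$ with $0 \le e_i < d_i$ and using $t_i^{a_i} = s_i^{q_i} t_i^{e_i}$ together with the commutativity of the $s_i$ with the $t_j$, every monomial rearranges as $t_1^{a_1}\cdots t_n^{a_n} = (s_1^{q_1}\cdots s_n^{q_n})(t_1^{e_1}\cdots t_n^{e_n})$. Hence the finitely many monomials $m_e := t_1^{e_1}\cdots t_n^{e_n}$ with $0 \le e_i < d_i$ satisfy $R = \sum_e R' m_e$. Given an arbitrary proper two-sided ideal $I$ of $R$, I would then pass to $S := R/I$ and $I' := I \cap R'$: the latter is a proper two-sided ideal of $R'$, the image of $R'$ in $S$ is the subring $R'/I'$, and reducing $R = \sum_e R' m_e$ modulo $I$ shows that $S = \sum_e (R'/I')\,\overline{m_e}$ is finite as a left $(R'/I')$-module.

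It then remains to normalize $R'/I'$ and use transitivity of finiteness. Because $\sigma_i^{d_i} = \tau$ forces $D_{\sigma_i} \subseteq D_\tau$, we have $F \subseteq \mathcal{Z}(D) \cap D_\tau$, so $\mathcal{Z}(D) \cap D_\tau$ is infinite and Theorem~\ref{constant_tuple} applies to the constant tuple $(\tau,\ldots,\tau)$. Thus the quotient $R'/I'$ of $D[u_1,\ldots,u_n;\tau,\ldots,\tau]$ is automorphically normalizable, i.e.\ finite as a left module over a subring $P$ isomorphic to a skew polynomial ring with commuting automorphisms; transitivity along $P \subseteq R'/I' \subseteq S$ then makes $S$ finite over $P$, proving that $(\sigma_1,\ldots,\sigma_n)$ is automorphically normalizable. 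For the final sentence, finite orders give $\tau = \mathbf{id}_D$, so $R'$ is an ordinary central polynomial ring and $R'/I'$ is centrally finitely generated; here I would invoke Theorem~\ref{main1} instead, obtaining a central polynomial subring $P$ over which $R'/I'$, and hence $S$, is finite, which yields central normalizability. The one step demanding genuine care is this descent to quotients — checking that $I \cap R'$ stays proper and that the generating set $\{m_e\}$ survives reduction modulo $I$, so that module-finiteness really transfers through the chain $P \subseteq R'/I' \subseteq S$; the remainder is the explicit substitution $s_i = t_i^{d_i}$ feeding into the already-granted Theorems~\ref{constant_tuple} and \ref{main1}.
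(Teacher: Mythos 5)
Your proof is correct and takes essentially the same route as the paper: pass to $d_i$-th powers so that the single automorphism $\tau=\sigma_1^{d_1}$ governs the resulting subring, apply Theorem~\ref{constant_tuple} to the constant tuple $(\tau,\ldots,\tau)$ (or Theorem~\ref{main1} when all orders are finite, so one may take $\tau=\mathbf{id}_D$), and transfer normalizability through a finite extension via Lemma~\ref{lem:finite-over-normalizable}. The only difference is presentational: the paper takes the powers $w_i=z_i^{d_i}$ of the generators directly inside the quotient $S=D[z_1,\ldots,z_n;\sigma_1,\ldots,\sigma_n]$, where $D[w_1,\ldots,w_n;\tau]$ is automorphically finitely generated with respect to the constant tuple and $S$ is visibly finite over it, so your upstairs construction and descent (properness of $I\cap R'$, reduction of $R=\sum_e R'm_e$ modulo $I$) are not needed; your $R'/I'$ is precisely the paper's subring $D[w_1,\ldots,w_n;\tau]$ of $S$.
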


This result gives a sufficient condition for a tuple of automorphisms of a division algebra $D$ to be automorphically normalizable over $D$. The authors do not know whether this condition is generally also a necessary one, see Question \ref{q3} below. However, in the case where $D$ is a field, we show that it is, provided that the condition that $\mathcal{Z}(D) \cap \big( \cap_{i=1}^n D_{\sigma_i} \big)$ is infinite holds. That is, we prove:

\begin{theorem}[{{\color{black} See Theorem~\ref{thm:notrivial_case} below}}]\label{fields_iff}
Let $F$ be a field and let $\sigma_1,\ldots,\sigma_n$ be commuting automorphisms of $F$ such that $\cap_{i=1}^n F_{\sigma_i}$ is infinite.
Then the tuple $(\sigma_1,\ldots,\sigma_n)$ is automorphically normalizable over $F$ if and only if $\sigma_1^{d_1}=\ldots=\sigma_n^{d_n}$ for some positive integers $d_1,\ldots,d_n$. \end{theorem}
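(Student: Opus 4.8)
My plan is to prove the two implications separately, the reverse one being essentially free and all the work lying in the forward one. The reverse implication follows directly from Corollary~\ref{finite_orders}: since $D=F$ is a field we have $\mathcal{Z}(F)=F$, so the intersection $\mathcal{Z}(F)\cap\big(\cap_{i=1}^n F_{\sigma_i}\big)$ is just $\cap_{i=1}^n F_{\sigma_i}$, which is infinite by hypothesis; hence whenever $\sigma_1^{d_1}=\cdots=\sigma_n^{d_n}$ for some positive integers, that corollary already gives automorphic normalizability. The remaining content is the forward implication, which I would establish in contrapositive form: assuming that no positive integers $d_1,\ldots,d_n$ satisfy $\sigma_1^{d_1}=\cdots=\sigma_n^{d_n}$, I would produce a single quotient of $S_0:=F[t_1,\ldots,t_n;\sigma_1,\ldots,\sigma_n]$ that is not finite over any skew polynomial subring.

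The first step is a reduction to two variables. Writing $g_i$ for the image of $\sigma_i$ in the finitely generated abelian group $G=\langle\sigma_1,\ldots,\sigma_n\rangle\le\mathbf{Aut}(F)$, I would argue that the failure of the condition, together with the fact that not all $\sigma_i$ can have finite order (else $\sigma_i^{\mathrm{ord}(\sigma_i)}=\mathbf{id}$ would give a common power), forces a \emph{bad pair} $(\sigma_i,\sigma_j)$ admitting no common positive power, i.e. with $\sigma_i^a\neq\sigma_j^b$ for all $a,b\geq 1$. Indeed, if some $\sigma_i$ has finite and some $\sigma_j$ infinite order, that pair is already bad; otherwise all $g_i$ are nonzero in $G\otimes\mathbb{R}$, and if every pair had a common positive power the $g_i$ would be pairwise positively proportional, hence all on one ray, from which clearing denominators and multiplying by the torsion exponent of $G$ would recover a genuine common power, contradicting the hypothesis. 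I would then pass to the proper two-sided ideal $I=(t_k:k\neq i,j)+(t_it_j)$ and the quotient $\bar S=S_0/I\cong F[t_i,t_j;\sigma_i,\sigma_j]/(t_it_j)$, the ``two-axes'' ring whose $F$-basis is $\{1\}\cup\{t_i^a:a\geq 1\}\cup\{t_j^b:b\geq 1\}$, all mixed products vanishing.

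The heart of the argument is a uniform computation in $\bar S$. For $\tau\in\mathbf{Aut}(F)$ and a nonzero $\tau$-twisting element $u\in\bar S$ (meaning $u\xi=\tau(\xi)u$ for all $\xi\in F$), comparing coordinates in the monomial basis forces $\sigma_i^a=\tau$, respectively $\sigma_j^b=\tau$, for each axis monomial occurring in $u$; since the pair is bad, once $u$ has no constant term its support must lie on a single axis. Given a subring $C\cong F[u_1,\ldots,u_m;\tau_1,\ldots,\tau_m]$ of $\bar S$, each generator is a twisting element, and after subtracting its constant term (which lies in $F\subseteq C$ and does not change $C$) I may assume all generators have zero constant term, hence are supported in positive degree on one axis. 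As $C$ is a domain while the product of positive-degree elements on different axes vanishes in $\bar S$, all generators must lie on a common axis, so $C\subseteq F[t_{i_0};\sigma_{i_0}]$ for a single index $i_0$. But then the positive part of $C$ annihilates the other axis, so $\bar S$ contains the infinite-dimensional $F$-space $\bigoplus_{b\geq 1}F\,t_{j_0}^{b}$ on which $C$ acts only through $F$; this cannot be finitely generated over $C$, so $\bar S$ is not automorphically normalizable, and neither is the tuple.

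I expect the main obstacle to be the clean packaging of the group-theoretic dichotomy and, with it, the correct handling of automorphisms of finite order. Finite order is precisely the feature that makes the condition \emph{hold} and thereby rescues normalizability: for the identity tuple, for instance, the non-monomial twisting element $t_i+t_j$ (of twist $\mathbf{id}$) generates a subring over which $\bar S$ is finite, which is exactly the classical normalization of $F[x,y]/(xy)$ over $F[x+y]$, and the coefficient argument above correctly collapses in that case because $\sigma_i$ and $\sigma_j$ then share the common power $\mathbf{id}$. The remaining verifications — that $I$ is proper with the stated quotient, that the isomorphism $C\cong F[u_1,\ldots,u_m;\tau_1,\ldots,\tau_m]$ lets one take the generators to be twisting elements over $F$, and the coordinate bookkeeping confining them to one axis — are routine once the bad pair has been isolated.
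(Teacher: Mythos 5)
Your proposal is correct, and while it targets the same witness as the paper --- the ``two-axes'' ring $F[t_i,t_j;\sigma_i,\sigma_j]/(t_it_j)$ --- the internals differ genuinely from the paper's argument. The paper proves the ``only if'' direction head-on: it invokes Proposition~\ref{prop:sigma-prolong} and Remark~\ref{rem:normalizability} to pass to the pairs $(\sigma_1,\sigma_i)$, applies Proposition~\ref{prop:counter-example2} to each pair to get $\sigma_1^{k_{i,1}}=\sigma_i^{k_{i,2}}$ (inner $=$ identity on a field), and combines these with $d_1=\prod_i k_{i,1}$; your contrapositive route with the $G\otimes\mathbb{R}$ ray argument is logically the same arithmetic, just heavier than needed (pairs through $\sigma_1$ alone suffice). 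The real divergence is in showing the two-axes quotient is not normalizable. The paper's Lemma~\ref{lem:counter-example2} first uses a monomial-counting lemma (Lemma~\ref{lem:counter-example2-pre}) to force the normalizing subring to have a \emph{single} generator $D[x]$, confines $x$ to one axis via Lemma~\ref{lem:linearly_separable}, and then derives a contradiction through integrality (Noetherianity plus \cite{ParanVo2023}). You instead handle arbitrarily many generators at once: after stripping constant terms each generator sits on one axis, the domain property of a skew polynomial ring forbids generators on different axes (this replaces the counting lemma), and the final contradiction is your projection/annihilation argument --- the positive part of $C$ kills the opposite axis, so that axis is an infinite-dimensional $F$-space on which $C$ acts through constant terms (this replaces the integrality step; to close it cleanly, either project a putative finite generating set onto that axis, or use that $C$ is left Noetherian so the axis would be a finitely generated $C$-module with action factoring through $F$). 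Your version is more elementary and self-contained; the paper's lemmas are stated over an arbitrary division algebra $D$ and yield the full if-and-only-if characterization of Lemma~\ref{lem:counter-example2}, which also informs Question~\ref{q2}, whereas your field-specific coordinate comparison would need the inner-automorphism bookkeeping ($\mathbf{in}_{b}\circ\sigma^k$) to generalize.
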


For general division algebras, we give the following example where automorphic normalizability fails: If $\sigma \in \mathbf{Aut}(D)$ is of infinite inner order, then the tuple $(\sigma,\sigma^{-1})$ is not automorphically normalizable over $D$ (Proposition \ref{prop:counter-example-1} below). Another negative example is given by Proposition \ref{prop:counter-example2} below. These examples lead to some natural open questions, see Question \ref{q1} and Question \ref{q2}.

Finally, we note that the notion of automorphic normalizability can be used to formulate the following generalization of Zariski's lemma:

\begin{proposition}[{{\color{black} See Proposition~\ref{Zari} below}}]\label{ZariMain} Let $S$ be a ring extension of a division algebra $D$ such that $S$ is automorphically normalizable over $D$. 
If $S$ is a division algebra, then $S$ is finite as a left module over $D$. 
\end{proposition}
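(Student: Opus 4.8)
The plan is to show that the skew polynomial subring over which $S$ is finite must in fact have no variables, i.e. must coincide with $D$ itself. Write $D \subseteq T \subseteq S$, where $T$ is a $D$-subalgebra isomorphic to $D[t_1,\ldots,t_n;\sigma_1,\ldots,\sigma_n]$ and $S$ is finite as a left $T$-module. Being a division ring is an isomorphism invariant, and the skew polynomial ring in $n \geq 1$ variables is never a division ring: the variable $t_1$ is a nonzero nonunit, because the total degree is additive under multiplication (the leading coefficient of a product of nonzero elements is nonzero, since $D$ is a domain and the $\sigma_i$ are automorphisms), so no element can have $t_1$ as a factor of its inverse. Hence it suffices to prove that $T$ is a division ring; this forces $n=0$, whence $T=D$ and $S$ is finite as a left $D$-module.

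To prove $T$ is a division ring, fix $a \in T$ with $a \neq 0$. Since $S$ is a division algebra and $a \in S$, the inverse $a^{-1}$ exists in $S$, and I would exploit the one relation that survives noncommutativity: $a$ and $a^{-1}$ commute, so the powers $\{a^{j} : j \in \mathbb{Z}\}$ form a commutative family. Consider the ascending chain of left $T$-submodules of $S$ given by $M_N = \sum_{k=0}^{N} T a^{-k}$. The ring $T$ is left Noetherian (by the skew Hilbert basis theorem for iterated Ore extensions of the Noetherian ring $D$), and $S$ is a finitely generated, hence Noetherian, left $T$-module. Therefore the chain $(M_N)$ stabilizes, so for some $N$ we have $a^{-(N+1)} \in M_N$, say $a^{-(N+1)} = \sum_{k=0}^{N} c_k a^{-k}$ with $c_k \in T$.

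The key manipulation is to right-multiply this identity by $a^{N}$. Because $a$ and $a^{-1}$ commute, $a^{-(N+1)} a^{N} = a^{-1}$ and $a^{-k} a^{N} = a^{N-k}$, and since $N-k \geq 0$ for $0 \leq k \leq N$, each $a^{N-k}$ lies in $T$. Thus
\[
a^{-1} = \sum_{k=0}^{N} c_k \, a^{N-k} \in T,
\]
so every nonzero element of $T$ is invertible in $T$, i.e. $T$ is a division ring. Combined with the first paragraph, this yields $n=0$ and $T=D$, and the conclusion follows.

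I expect the main obstacle to be precisely the noncommutativity. The classical Zariski argument asserts that $a^{-1}$ is integral over $T$ and expresses it through a monic relation, which in a noncommutative ring would require $a^{-1}$ to commute with the coefficients, or else a Cayley--Hamilton/adjugate argument that is unavailable since $a^{-1}$ need not be central in $T$. The device that circumvents this is to run the finiteness argument using only the commutative family of powers of $a$, so that the single commutation relation $a a^{-1} = a^{-1} a = 1$ suffices; the Noetherian property of $S$ as a left $T$-module then supplies the finite relation in place of integral dependence.
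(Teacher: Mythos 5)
Your proof is correct and follows essentially the same route as the paper: in both arguments, left Noetherianity of the skew polynomial subring (via the iterated skew Hilbert basis theorem) together with finiteness of $S$ produces a relation expressing a negative power of $a$ as a left combination $\sum_k c_k a^{-k}$ with coefficients in the subring, and right-multiplication by a suitable positive power of $a$ --- using only that $a$ commutes with $a^{-1}$ --- places $a^{-1}$ inside the subring. The only differences are cosmetic: the paper cites \cite[Lemma~2.3]{ParanVo2023} for left integrality of $a_1^{-1}$ and derives the contradiction directly from $a_1^{-1}\in R$ with the variable $a_1$ a nonunit, whereas you inline that step as a chain-stabilization argument and prove the slightly stronger statement that the whole subring $T$ is a division ring before invoking the degree argument to force $n=0$.
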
 
The results discussed above concerning automorphic normalizability yield various situations to which this version of Zariski's lemma applies.

The paper is organized as follows: In \S\ref{sec:prelim} we provide basic facts concerning skew polynomials rings, automorphically finitely generated extensions, normalizability, and the proof of Proposition \ref{ZariMain}. In \S\ref{sec_noether} we study normalizability of constant tuples of automorphisms, and prove Theorem \ref{constant_tuple} and Corollary \ref{finite_orders}. In \S\ref{sec_center} we study central normalizability and prove Theorem \ref{main1} and Theorem \ref{main_centrally_finite}. Finally, in \S\ref{sec:general_tuples} we study general tuples of automorphisms, introduce the above mentioned negative examples where normalizability does not hold, and prove Theorem \ref{fields_iff}.

\section{Preliminaries}\label{sec:prelim}

Throughout this paper all rings are assumed to be associative with unity. 
By a {\bf division algebra} or a {\bf division ring}, we mean a ring in which every nonzero element has a multiplicative inverse.  If $D$ is a division algebra of finite dimension over its center, one says that $D$ is a {\bf centrally finite} division algebra \cite[Definition~14.1]{Lam1991}.

\subsection{Automorphically finitely generated ring extensions}\label{subsect:a.f.g.}

Let $R$ be a (possibly non-commutative) ring and let $\sigma$ be an automorphism of $R$. The skew polynomial ring $R[t,\sigma]$ consists of all polynomials in the variable $t$, with the usual addition, and with multiplication determined by the rule $ta = a^\sigma t$ for all $a \in R$ \cite[Chapter 2]{Goodearl2004}, \cite[Chapter 1]{Ore33}. The study of skew polynomial rings is prolific, both from a theoretical point of view, and for their various applications, see for example 
\cite{AP20b, BU09, Goo92, GL94b, LaL04}.

More generally, if $\sigma_1,\ldots,\sigma_n$ are automorphisms of $R$, one forms the multivariate skew polynomial ring $R[t_1,\ldots,t_n ; \sigma_1,\ldots,\sigma_n]$, in which the variables $t_1,\ldots,t_n$ commute pairwise, and multiplication is determined by the rule $x_i a = a^{\sigma_i}x_i$ for all $1 \leq i \leq n$ and $a \in R$. If one further assumes that the $\sigma_i$ commute pairwise, then one may view $R[t_1,\ldots,t_n ; \sigma_1,\ldots,\sigma_n]$ as an iterated skew polynomial ring: 
$$R[t_1,\ldots,t_n ; \sigma_1,\ldots,\sigma_n] = R[t_1,\ldots,t_{n-1} ; \sigma_1,\ldots,\sigma_{n-1}][t_n,\sigma_n],$$
by extending $\sigma_n$ trivially to an automorphism of $R[t_1,\ldots,t_{n-1} ; \sigma_1,\ldots,\sigma_{n-1}]$, given by $\sigma_n(t_i) = t_i$ for all $1 \leq i \leq n-1$ (see \cite[Theorem 2.4]{Vos86}). If the ring $R$ is (left) Noetherian then so is $S=R[t_1,\ldots,t_n;\sigma_1,\ldots,\sigma_n]$. Indeed, one can view $S$ as $R[t_1,\ldots,t_{n-1};\sigma_1,\ldots,\sigma_{n-1}][t_n;\sigma_{n}]$, by extending $\sigma_n$ to an automorphism of $R[t_1,\ldots,t_{n-1};\sigma_1,\ldots,\sigma_{n-1}]$ given by $\sigma_n(t_i) = t_i$ for each $1 \leq i < n$. Then, by applying inductively the skew Hilbert basis theorem given in \cite[page 17]{Goodearl2004}, we get that $S$ is also (left) Noetherian. As a consequence, if a ring extension of $S$ is finite over $S$ (i.e. finitely generated as a left $S$-module) then it is left integral over $S$ (see \cite[Lemma~2.2]{ParanVo2023}).

In the case where $\sigma_1,\ldots,\sigma_n$ are the identity automorphism, the ring $R[t_1,\ldots,t_n ; \sigma_1,\ldots,\sigma_n]$ is the usual polynomial ring $R[t_1,\ldots,t_n]$ in $n$ central variables over $R$.



\begin{definition}\label{def:f.g.}
Let $S/R$ be a ring extension. We say that $S$ is {\bf automorphically finitely generated} over $R$ if $S$ is isomorphic to a quotient ring $R[t_1,\ldots,t_n ; \sigma_1,\ldots,\sigma_n]/I$, where:
\begin{enumerate}
    \item $\sigma_1,\ldots,\sigma_n$ are commuting automorphisms of $R$,
    \item $I$ is a two-sided ideal of $R[t_1,\ldots,t_n ; \sigma_1,\ldots,\sigma_n]$, satisfying $R \cap I = \{0\}$. 
\end{enumerate}

We say that $S$ is {\bf centrally finitely generated} over $R$ if $S$ is automorphically finitely generated, and the automorphisms $\sigma_1,\ldots,\sigma_n$ can be chosen to be the identity automorphism.
\end{definition}

Note that in the case where $R$ is a division algebra, the condition that $R \cap I = \{0\}$ is equivalent to $I$ being a proper two-sided ideal. 
Thus in the case where $R$ is a field, a centrally finitely generated extension of $R$ is simply a finitely generated extension of $R$, in the classical sense. 

\begin{definition}[see {\cite[10.1.3,~p.~366]{McConnell2001}}]
Let $S/R$ be a ring extension. An element $a \in S$ is \textbf{automorphic} 
over $R$ if there exists an automorphism $\sigma \in \mathbf{Aut}(R)$ such that $ab=b^\sigma a$ for all $b \in R$. In this case, we say that $a$ is automorphic over $R$ with respect to $\sigma$. If, furthermore,  $\sigma=\mathbf{id}_R$, we say that $a$ \textbf{centralizes} $R$. 
\end{definition}

\begin{remark}
In general, if $S/R$ is a ring extension, an element $a \in S$ can be automorphic over $R$ with respect to several different automorphisms in $\mathbf{Aut}(R)$. However, if $\mathbf{Ann}_R(a):=\{r \in R \,|\, ra=0\}$ is equal to $\{0\}$, then the automorphism $\sigma$ is uniquely determined by $a$. Indeed, if $a$ is automorphic over $R$ with respect to two automorphisms $\sigma,\tau$, then for all $b \in R$ we have $b^\sigma a = ab= b^\tau a$, hence $(b^\sigma - b^\tau)a = 0$, hence $b^\sigma - b^\tau \in \mathbf{Ann}_R(a)$, hence $b^\sigma = b^\tau$.   
\end{remark}

In the case where $S=R=D$ is a division algebra, every element of $D$ is automorphic over $D$:

\begin{example}\label{ex:inner_automorphic}
	Let $D$ be a division algebra and $a$ an arbitrary nonzero element of $D$.
	We denote by $\mathbf{in}_a$ the \textbf{inner automorphism} of $D$ with respect to $a$ which is defined as 
	$\mathbf{in}_a(r)=ara^{-1}$	for all $r \in D$. Then $a$ is automorphic over $D$ with respect to $\mathbf{in}_a$, since $ar=ara^{-1} \cdot a=\mathbf{in}_a(r)a$. 
\end{example}

{\color{black}
The following lemma can be found in \cite{Goodearl2004}.
}

\begin{lemma}[{See also \cite[Proposition~2.4,~p.~37]{Goodearl2004}}]\label{isom} 
Let $S/R$ be a ring extension. Let $a_1,\ldots,a_n$ be $n$ commuting elements of $S$, automorphic over $R$ with respect to commuting automorphisms $\sigma_1,\ldots,\sigma_n \in \mathbf{Aut}(R)$. Then the natural embedding $R \xhookrightarrow{} S$ can be  uniquely extended to a ring homomorphism $\phi \colon R[t_1,\ldots,t_n; \sigma_1,\ldots,\sigma_n] \to S$ such that $\phi(t_i)=a_i$ for all $1 \leq i \leq n$.
\end{lemma}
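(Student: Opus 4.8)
The plan is to realize $\phi$ explicitly via the standard left-module normal form of the skew polynomial ring, and then to verify multiplicativity by matching the twisting rules on the two sides. Recall from the iterated construction recalled in \S\ref{sec:prelim} that $P := R[t_1,\ldots,t_n;\sigma_1,\ldots,\sigma_n]$ is a free left $R$-module with basis the commuting monomials $t^{\mathbf{e}} := t_1^{e_1}\cdots t_n^{e_n}$, $\mathbf{e}=(e_1,\ldots,e_n)\in\mathbb{N}^n$; that is, every element of $P$ is uniquely a finite sum $\sum_{\mathbf{e}} r_{\mathbf{e}} t^{\mathbf{e}}$ with $r_{\mathbf{e}}\in R$. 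I will use multi-index notation $a^{\mathbf{e}} := a_1^{e_1}\cdots a_n^{e_n}$ in $S$ and $\sigma^{\mathbf{e}} := \sigma_1^{e_1}\circ\cdots\circ\sigma_n^{e_n}$ in $\mathbf{Aut}(R)$; since the $a_i$ commute and the $\sigma_i$ commute, both are independent of the order in which the factors are taken.

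First I would define $\phi$ on the basis by $\phi(t^{\mathbf{e}}) = a^{\mathbf{e}}$ and extend it to a map of left $R$-modules, i.e. $\phi\big(\sum_{\mathbf{e}} r_{\mathbf{e}} t^{\mathbf{e}}\big) = \sum_{\mathbf{e}} r_{\mathbf{e}} a^{\mathbf{e}}$. This is well defined precisely because of the uniqueness of the normal form, and it restricts to the identity on $R$ (the term $\mathbf{e}=0$), so it extends the given embedding; additivity is then automatic and $\phi(1)=1$.

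The crux is multiplicativity, and this is the step I expect to require the most care. The key identity on each side is a rule for moving a scalar past a monomial. On $P$, iterating $t_i r = r^{\sigma_i} t_i$ yields $t^{\mathbf{e}} r = r^{\sigma^{\mathbf{e}}} t^{\mathbf{e}}$ for all $r\in R$, and combined with $t^{\mathbf{e}} t^{\mathbf{f}} = t^{\mathbf{e}+\mathbf{f}}$ this gives, for $r,s\in R$,
\[
(r\,t^{\mathbf{e}})(s\,t^{\mathbf{f}}) = r\,s^{\sigma^{\mathbf{e}}}\,t^{\mathbf{e}+\mathbf{f}}.
\]
On $S$, the hypothesis that $a_i$ is automorphic over $R$ with respect to $\sigma_i$ gives $a_i r = r^{\sigma_i} a_i$, and iterating this (moving $r$ leftward through the fixed product $a_1^{e_1}\cdots a_n^{e_n}$) yields $a^{\mathbf{e}} r = r^{\sigma^{\mathbf{e}}} a^{\mathbf{e}}$, while the commutativity of the $a_i$ gives $a^{\mathbf{e}} a^{\mathbf{f}} = a^{\mathbf{e}+\mathbf{f}}$. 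Hence
\[
\phi(r\,t^{\mathbf{e}})\,\phi(s\,t^{\mathbf{f}}) = (r\,a^{\mathbf{e}})(s\,a^{\mathbf{f}}) = r\,s^{\sigma^{\mathbf{e}}}\,a^{\mathbf{e}}a^{\mathbf{f}} = r\,s^{\sigma^{\mathbf{e}}}\,a^{\mathbf{e}+\mathbf{f}} = \phi\big(r\,s^{\sigma^{\mathbf{e}}}\,t^{\mathbf{e}+\mathbf{f}}\big) = \phi\big((r\,t^{\mathbf{e}})(s\,t^{\mathbf{f}})\big).
\]
By biadditivity of multiplication and of $\phi$, this identity on basis terms extends to arbitrary elements, so $\phi$ is a ring homomorphism. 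What makes the argument go through is exactly the compatibility of the two twisting actions: the same composite automorphism $\sigma^{\mathbf{e}}$ governs the commutation of $t^{\mathbf{e}}$ with $R$ inside $P$ and of $a^{\mathbf{e}}$ with $R$ inside $S$, and this is where the commuting hypotheses on both the $\sigma_i$ and the $a_i$ are genuinely used.

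Finally, uniqueness is immediate: $R$ together with $t_1,\ldots,t_n$ generates $P$ as a ring, so any ring homomorphism extending the embedding of $R$ and sending $t_i\mapsto a_i$ agrees with $\phi$ on these generators, hence everywhere. One could alternatively induct on $n$ using the single-variable universal property of \S\ref{sec:prelim}, writing $P$ as an iterated skew extension; but since the intermediate map $R[t_1,\ldots,t_{n-1};\sigma_1,\ldots,\sigma_{n-1}]\to S$ need not be injective, I find the direct normal-form argument cleaner, as it avoids having to treat $S$ as an extension of a ring in fewer variables.
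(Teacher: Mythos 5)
Your proof is correct. Note, though, that the paper does not actually prove this lemma: it is quoted from the literature (the remark preceding it says it ``can be found in'' Goodearl--Warfield, and the statement carries the citation to Proposition~2.4 there). So there is no internal proof to compare against. The route implicit in the cited source is the one-variable universal property of $R[t;\sigma]$ --- stated there for an \emph{arbitrary} ring homomorphism $R \to S$, not merely an embedding --- iterated $n$ times after extending $\sigma_n$ trivially to the ring in the first $n-1$ variables, exactly the iterated-skew-extension structure the paper recalls in \S 2.1. Your direct normal-form argument is a legitimate, self-contained alternative: it trades the induction for an explicit check of multiplicativity on basis elements, and matching $(r\,t^{\mathbf{e}})(s\,t^{\mathbf{f}}) = r\,s^{\sigma^{\mathbf{e}}}\,t^{\mathbf{e}+\mathbf{f}}$ against $a^{\mathbf{e}} r = r^{\sigma^{\mathbf{e}}} a^{\mathbf{e}}$ and $a^{\mathbf{e}}a^{\mathbf{f}} = a^{\mathbf{e}+\mathbf{f}}$ is exactly the right content; the well-definedness of $\phi$ from the freeness of the left $R$-module structure and the uniqueness from generation by $R \cup \{t_1,\ldots,t_n\}$ are both sound. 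One small correction: your stated reason for avoiding the induction (that the intermediate map $R[t_1,\ldots,t_{n-1};\sigma_1,\ldots,\sigma_{n-1}] \to S$ need not be injective) is only an obstacle if one insists on invoking the lemma in the form stated here, for extensions and embeddings; the universal property in Goodearl--Warfield applies to arbitrary homomorphisms, so the induction does go through --- one verifies $a_n\,\phi'(p) = \phi'(\sigma_n(p))\,a_n$ for all $p$ in the smaller ring using the commutation hypotheses, just as in your basis computation. Still, your choice keeps the argument elementary and self-contained, which is a virtue.
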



Let $S/R$ be a ring extension and $A$ a subset of $S$.  The smallest subring of $S$ containing $R \cup A$, which will be denoted by $R[A]$, is the ring generated by $R\cup A$ inside $S$. In case $A=\{a_1,\ldots,a_n\}$ is a finite set, we shall usually write $R[a_1,\ldots,a_n]$ instead of $R[\{a_1,\ldots,a_n\}]$.

\begin{proposition}\label{equivalent} A ring extension $S/R$ is automorphically (centrally) finitely generated if and only if there exist commuting elements $a_1,\ldots,a_n$ in $S$, automorphic over $R$ (which centralize $R$) with respect to commuting automorphisms $\sigma_1,\ldots,\sigma_n$, such that $S = R[a_1,\ldots,a_n]$.
\end{proposition}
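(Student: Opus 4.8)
The plan is to prove the two implications separately, handling the automorphic and central cases in parallel, since the central case is simply the automorphic one with all $\sigma_i = \mathbf{id}_R$.

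For the forward direction ($\Rightarrow$), I would start from Definition \ref{def:f.g.}: if $S$ is automorphically finitely generated, there is an isomorphism $\psi \colon R[t_1,\ldots,t_n;\sigma_1,\ldots,\sigma_n]/I \xrightarrow{\sim} S$ with $I$ a two-sided ideal satisfying $R \cap I = \{0\}$. Composing the quotient map with $\psi$ gives a surjection $\pi \colon R[t_1,\ldots,t_n;\sigma_1,\ldots,\sigma_n] \twoheadrightarrow S$ whose restriction to $R$ is the natural embedding, the injectivity here being exactly what the condition $R \cap I = \{0\}$ guarantees. Setting $a_i := \pi(t_i)$, I would then verify the three required properties directly: the $a_i$ commute because the $t_i$ commute pairwise in the multivariate skew polynomial ring; each $a_i$ is automorphic over $R$ with respect to $\sigma_i$ because the defining relation $t_i b = b^{\sigma_i} t_i$ descends under $\pi$ to $a_i b = b^{\sigma_i} a_i$ for all $b \in R$; and $S = R[a_1,\ldots,a_n]$ because $\pi$ is surjective and its domain is generated as a ring by $R \cup \{t_1,\ldots,t_n\}$. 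In the central case every $\sigma_i$ is the identity, so the relation reads $a_i b = b a_i$, i.e., each $a_i$ centralizes $R$.

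For the reverse direction ($\Leftarrow$), I would invoke Lemma \ref{isom}: given commuting $a_1,\ldots,a_n$ automorphic with respect to commuting $\sigma_1,\ldots,\sigma_n$, the embedding $R \hookrightarrow S$ extends uniquely to a homomorphism $\phi \colon R[t_1,\ldots,t_n;\sigma_1,\ldots,\sigma_n] \to S$ with $\phi(t_i) = a_i$. Its image is a subring of $S$ containing $R$ and all the $a_i$, hence contains $R[a_1,\ldots,a_n] = S$, so $\phi$ is surjective. Taking $I := \ker \phi$, the first isomorphism theorem yields $S \cong R[t_1,\ldots,t_n;\sigma_1,\ldots,\sigma_n]/I$. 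Finally, since $\phi|_R$ is the injective natural embedding, $R \cap I = \{0\}$, which is precisely the remaining condition of Definition \ref{def:f.g.}. In the central case all $\sigma_i = \mathbf{id}_R$ makes the domain the ordinary polynomial ring $R[t_1,\ldots,t_n]$, yielding central finite generation.

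I expect the only genuinely delicate point to be the bookkeeping around the condition $R \cap I = \{0\}$: in each direction one must keep careful track of the identification of $R$ with its image under the embedding and confirm that this embedding is the one being preserved. Everything else is a routine transport of the skew-commutation relations across the quotient map together with a direct application of Lemma \ref{isom}.
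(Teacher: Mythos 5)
Your proposal is correct and follows essentially the same route as the paper: Lemma \ref{isom} plus a surjectivity argument for the direction producing the quotient presentation, and taking $a_i$ to be the image of $t_i$ (verifying commutation, automorphicity, and generation) for the other. The only cosmetic difference is that you establish surjectivity of $\phi$ via the ``smallest subring containing $R \cup \{a_1,\ldots,a_n\}$'' characterization, whereas the paper writes out the set of expressions $\sum r_{i_1,\ldots,i_n} a_1^{i_1} \cdots a_n^{i_n}$ explicitly; both are valid and amount to the same observation.
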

\begin{proof}
Suppose first that $a_1,\ldots,a_n \in S$ are automorphic commuting elements, with respect to commuting automorphisms $\sigma_1,\ldots,\sigma_n$, such that $S = R[a_1,\ldots,a_n]$.  Apply Lemma \ref{isom} to construct a homomorphism $\phi \colon R[t_1,\ldots,t_n; \sigma_1,\ldots,\sigma_n] \to S$ such that $\phi(t_i)=a_i$ for all $i$ and $\phi(r) = r$ for all $r \in R$. Consider the set of all elements of $S$ of the form:

$$\sum_{i_1,\ldots,i_n \geq 0} r_{i_1,\ldots,i_n}a_1^{i_1}\cdot \ldots \cdot a_n^{i_n},$$
with finitely many non-zero $r_{i_1,\ldots,i_n} \in R$. Since the $a_i$ commute, this set is clearly a subring of $S$ containing $R$ and $a_1,\ldots,a_n$, hence it equals $S = R[a_1,\ldots,a_n]$. Thus every element of $S$ is of the presented form, hence it is the image via $\phi$ of a corresponding element 
$$\sum_{i_1,\ldots,i_n \geq 0} r_{i_1,\ldots,i_n}t_1^{i_1}\cdot \ldots \cdot t_n^{i_n}$$
of $R[t_1,\ldots,t_n; \sigma_1,\ldots,\sigma_n]$. Thus $\phi$ is an epimorphism onto $S$, hence $$S = R[t_1,\ldots,t_n; \sigma_1,\ldots,\sigma_n]/\ker(\phi).$$
Since $\phi$ fixes $R$ point-wise, we have $R \cap \ker(\phi) = \{0\}$. Thus $S$ is an automorphically finitely generated extension of $R$. Moreover, if $a_1,\ldots,a_n$ centralize $R$, then we may assume that $\sigma_1,\ldots,\sigma_n$ are the identity automorphism, hence $S = R[t_1,\ldots,t_n]/\ker(\phi)$, which means that $S$ is a centrally finitely generated extension of $R$.

Conversely, if $S$ is an automorphically finitely generated extension of $R$, then by definition there exist commuting automorphisms $\sigma_1,\ldots,\sigma_n$ of $R$, such that $S = R[t_1,\ldots,t_n; \sigma_1,\ldots,\sigma_n]/I$, for some ideal $I$ satisfying $R \cap I = \{0\}$ . Then we may view $R$ as a subring of $S$, via the canonical map $r \mapsto r+I$. For each $1 \leq i \leq n$, let $a_i = t_i+I$. Then $a_1,\ldots,a_n$ generates $S$ over $R$ and are commuting, since $t_1,\ldots,t_n$ are commuting. We have 

$$a_ir = (t_i+I)(r+I) = t_ir+I = r^{\sigma_i}t_i+I = r^{\sigma_i}a_i$$
for all $1 \leq i \leq n$ and $r \in R$. Thus $a_1,\ldots,a_n$ are automorphic elements over $R$, with respect to automorphisms $\sigma_1,\ldots,\sigma_n$. Moreover, if $\sigma_1,\ldots,\sigma_n$ are the identity automorphism, then $a_1,\ldots,a_n$ centralize $R$, by the presented equality. \end{proof}

\begin{notation}\label{not:extensions}
Assume $S/R$ is an automorphically finitely generated ring extension and let $a_1,\ldots,a_n \in S$ be commuting elements, automorphic over $R$, such that $S=R[a_1,\ldots,a_n]$, as in Proposition \ref{equivalent}, with $\sigma_1,\ldots,\sigma_n$ being corresponding commuting automorphisms of $R$. Then we write $S=R[a_1,\ldots,a_n;\sigma_1,\ldots,\sigma_n]$. If in addition $\sigma_1=\ldots=\sigma_n$, then we write $S=R[a_1,\ldots,a_n;\sigma_1]$. If furthermore, $\sigma_1=\ldots=\sigma_n=\mathbf{id}_R$ (the case where $S$ is centrally finitely generated over $R$), then we write $S=R[a_1,\ldots,a_n]$.
\end{notation}

Note that, with the above notation, we have $S=R[a_1,\ldots,a_n;\sigma_1,\ldots,\sigma_n] = R[a_1,\ldots,a_n]$, that is, the ring $S$ is generated over $R$ by the set $\{a_1,\ldots,a_n\}$. In particular, if $S$ is the skew polynomial ring $R[t_1,\ldots,t_n;\sigma_1,\ldots,\sigma_n]$, then $S = R[t_1,\ldots,t_n]$ (indeed, as a ring, the skew polynomial ring $S$ is generated over $R$ by the variables $t_1,\ldots,t_n$). The notation $S=R[a_1,\ldots,a_n;\sigma_1,\ldots,\sigma_n]$ includes the extra information concerning the automorphisms that correspond to the automorphic elements $a_1,\ldots,a_n$.

\begin{remark}
Let $S/R$ be a ring extension. If two non zero-divisors $a_1,\,a_2 \in S$ are automorphic over $R$ with respect to automorphisms $\sigma_1,\,\sigma_2$, respectively, and if $a_1$ commutes with $a_2$, then $\sigma_1$ also commutes with $\sigma_2$.
Indeed, for every $b \in R$, we have $$b^{\sigma_1 \circ \sigma_2}a_1a_2 = (b^{\sigma_2})^{\sigma_1}a_1a_2 = a_1a_2b = a_2a_1b = (b^{\sigma_1})^{\sigma_2}a_2a_1 = b^{\sigma_2 \circ \sigma_1}a_2a_1 = b^{\sigma_2 \circ \sigma_1}a_1a_2,$$
which implies that $b^{\sigma_1 \circ \sigma_2} = b^{\sigma_2 \circ \sigma_1}$, since $a_1a_2$ is also a non-zero divisor.


\end{remark}

\subsection{Substitution maps and algebraic independence}

\begin{definition}\label{def:substituion} Let $S/R$ be a ring extension. Let $a_1,\ldots,a_n$ be $n$ commuting elements of $S$, automorphic over $R$ with respect to commuting automorphisms $\sigma_1,\ldots,\sigma_n \in \mathbf{Aut}(R)$. Let $$\phi \colon R[t_1,\ldots,t_n; \sigma_1,\ldots,\sigma_n] \to S$$ be the homomorphism given by Lemma~\ref{isom}. We call $\phi$ a \textbf{substitution map} at the point $a = (a_1,\ldots,a_n)$. For a polynomial $f \in R[t_1,\ldots,t_n;\sigma_1,\ldots,\sigma_n]$, we define the substitution of $f$ at $a=(a_1,\ldots,a_n)$, denoted by $f(a)$ or $f(a_1,\ldots,a_n)$, to be the image $\phi(f)$ in $S$.
\end{definition}


\begin{example}
Let $S = R = D$ be a division algebra and let $a_1,\ldots,a_n$ be commuting elements of $D^\times$.
Then each $a_i$ is automorphic over $D$ with respect to the inner automorphism $\sigma_i = \mathbf{in}_{a_i}$ (see Example~\ref{ex:inner_automorphic}), and these inner automorphisms are commuting.
Thus we have a substitution homomorphism $\phi \colon D[t_1,\ldots,t_n;\sigma_1,\ldots,\sigma_n] \to D$, given by $\phi(f) = f(a_1,\ldots,a_n) = \sum r_{\mathbf{i}} a_1^{i_1} \cdot \ldots \cdot a_n^{i_n}$ for all $f=\sum r_{\mathbf{i}} t_1^{i_1} \cdot \ldots \cdot t_n^{i_n} \in D[t_1,\ldots,t_n;\sigma_1,\ldots,\sigma_n]$. 
We note that in the case of one variable, i.e. $n=1$, the substitution here coincides with the $\sigma$-substitution $D[t_1,\sigma_1] \to D$ studied in \cite[\S2]{LL88}. Indeed, this follows immediately from the fact that $\sigma_1(a_1) = a_1$. We note, however, that the $\sigma$-substitution considered in \cite{LL88} is generally not a homomorphism, while in our situation it is.
\end{example}


\begin{example}\label{ex:substitution_poly}
    Let $\sigma_1,\ldots,\sigma_n$ be commuting automorphisms of a division algebra $D$ and let $R=S=D[t_1,\ldots,t_n;\sigma_1,\ldots,\sigma_n]$. We extend each $\sigma_i$ to an automorphism of $R$, determined by $\sigma_i(t_j)=t_j$ for all $i,j$. If $f_1,\ldots,f_n \in R$ are commuting and automorphic over $D$ with respect to automorphisms $\sigma_1,\ldots,\sigma_n$, then they induce the substitution map $f \mapsto f(f_1,\ldots,f_n)$ from $R$ to $R$. 
    
\end{example}

\begin{remark}
    In Definition~\ref{def:substituion}, the requirement that $a_1,\ldots,a_n \in S$ are commuting and automorphic over $R$ with respect to commuting automorphisms $\sigma_1,\ldots,\sigma_n$, is essential.
    Indeed, suppose that $a_1,\ldots,a_n$ are some elements of $S$.  
    Assume that there is a ring homomorphism $\phi$ from $R[t_1,\ldots,t_n;\sigma_1,\ldots,\sigma_n]$ to $S$ such that $\phi(r)=r$ for all $r \in R$ and $\phi(t_i)=a_i$ for all $i$. Then one verifies directly that the elements $a_1,\ldots,a_n \in S$ are commuting and automorphic over $R$ with respect to commuting automorphisms $\sigma_1,\ldots,\sigma_n$. 
\end{remark}

\begin{definition}\label{alg_ind}
Let $S/R$ be a ring extension and let $a_1,\ldots,a_n \in S$ be commuting elements. 
We say that $a_1,\ldots,a_n$ are {\bf (left) algebraically independent over $R$} if the set $$\left\{a_1^{i_1} \cdot \ldots \cdot a_n^{i_n} \right\}_{\mathbf{i}=(i_1,\ldots,i_n) \in (\mathbb{N}\cup\{0\})^n}$$
of monomials in $a_1,\ldots,a_n$ is (left) linearly independent over $R$. Otherwise, we say that $a_1,\ldots,a_n$ are {\bf (left) algebraically dependent over $R$}.
\end{definition}

If $S$ is a commutative ring, then the above definition coincides with the usual definition of algebraic independence. 


\begin{lemma} 
Let $S$ be a ring extension of a division algebra $D$.
Let $a_1,\ldots,a_n$ be $n$ commuting elements of $S$, automorphic over $D$, with respect to commuting automorphisms $\sigma_1,\ldots,\sigma_n \in \mathbf{Aut}(D)$. Then $a_1,\ldots,a_n$ are algebraically dependent over $D$ if and only if there exists a nonzero polynomial $f \in D[t_1,\ldots,t_n;\sigma_1,\ldots,\sigma_n]$ such that $f(a_1,\ldots,a_n)=0$.
\end{lemma}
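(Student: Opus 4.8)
The plan is to prove the equivalence directly from the definitions, using the substitution map $\phi \colon D[t_1,\ldots,t_n;\sigma_1,\ldots,\sigma_n] \to S$ supplied by Lemma~\ref{isom}. The key observation is that $\phi$ sends each monomial $t_1^{i_1}\cdots t_n^{i_n}$ to the corresponding monomial $a_1^{i_1}\cdots a_n^{i_n}$, and is $D$-linear on the left in the sense that $\phi(r \cdot t_1^{i_1}\cdots t_n^{i_n}) = r \cdot a_1^{i_1}\cdots a_n^{i_n}$ for $r \in D$. The crux is therefore to make precise the connection between a left-linear dependence among the monomials $\{a_1^{i_1}\cdots a_n^{i_n}\}$ and a nonzero polynomial in the kernel of $\phi$.

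First I would unwind the definition of algebraic dependence (Definition~\ref{alg_ind}): the elements $a_1,\ldots,a_n$ are algebraically dependent over $D$ precisely when there exist finitely many $r_{\mathbf{i}} \in D$, not all zero, with $\sum_{\mathbf{i}} r_{\mathbf{i}} a_1^{i_1}\cdots a_n^{i_n} = 0$. For the forward direction, given such a relation I would set $f = \sum_{\mathbf{i}} r_{\mathbf{i}} t_1^{i_1}\cdots t_n^{i_n} \in D[t_1,\ldots,t_n;\sigma_1,\ldots,\sigma_n]$; this $f$ is nonzero because the monomials $t_1^{i_1}\cdots t_n^{i_n}$ form a free left $D$-basis of the skew polynomial ring, and by the defining property of $\phi$ we have $f(a_1,\ldots,a_n) = \phi(f) = \sum_{\mathbf{i}} r_{\mathbf{i}} a_1^{i_1}\cdots a_n^{i_n} = 0$. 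Conversely, given a nonzero $f \in D[t_1,\ldots,t_n;\sigma_1,\ldots,\sigma_n]$ with $f(a_1,\ldots,a_n)=0$, I would write $f$ in its canonical left-normal form $f = \sum_{\mathbf{i}} r_{\mathbf{i}} t_1^{i_1}\cdots t_n^{i_n}$ with not all $r_{\mathbf{i}}$ zero (such a form exists and is unique by the free-module structure of the skew polynomial ring), and observe that applying $\phi$ yields the nontrivial left-linear dependence $\sum_{\mathbf{i}} r_{\mathbf{i}} a_1^{i_1}\cdots a_n^{i_n}=0$, exhibiting algebraic dependence.

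The only genuinely subtle point — the part I would flag as the main obstacle — is ensuring that the left-coefficient normal form $f=\sum_{\mathbf{i}} r_{\mathbf{i}} t_1^{i_1}\cdots t_n^{i_n}$ is well-defined and faithful to the monomial structure, that is, that the coefficients $r_{\mathbf{i}}$ on the polynomial side and the coefficients in the dependence relation on the $S$ side correspond bijectively under $\phi$. This hinges on the skew polynomial ring being a free left $D$-module on the standard monomials, which is exactly the structural fact recorded in the preliminaries (the iterated skew polynomial ring construction). Because each $a_i$ is automorphic over $D$ with respect to $\sigma_i$, the relations $a_i r = r^{\sigma_i} a_i$ mirror the commutation rules $t_i r = r^{\sigma_i} t_i$ in the skew polynomial ring, so moving all scalars to the left before applying $\phi$ introduces no discrepancy; this is precisely what guarantees that $\phi$ respects left-normal forms and that the two notions of a ``nontrivial relation'' match up. Once this correspondence is in place, both implications follow immediately, and no further computation is required.
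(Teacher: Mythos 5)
Your proof is correct and takes essentially the same route as the paper's: both directions pass through the substitution homomorphism $\phi$ of Lemma~\ref{isom}, converting a nontrivial left $D$-linear relation among the monomials $a_1^{i_1}\cdots a_n^{i_n}$ into a nonzero polynomial in $\ker(\phi)$ and back, with the free left $D$-module structure of $D[t_1,\ldots,t_n;\sigma_1,\ldots,\sigma_n]$ on its standard monomials guaranteeing that the lifted polynomial is nonzero. The point you flag as subtle (faithfulness of the left-normal form under $\phi$) is exactly what the paper uses implicitly when it notes that distinct monomials $M_i$ lift to distinct monomials $N_i$, so your write-up is simply a more explicit version of the same argument.
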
 
\begin{proof}
The claim follows from the definitions:  If $a_1,\ldots,a_n$ are algebraically dependent, then there are monomials  $M_1,\ldots,M_k$ in $a_1,\ldots,a_n$ and elements $b_1,\ldots,b_k \in D$, not all zero, such that $b_1M_1+\ldots+b_kM_k = 0$. Consider now the homomorphism $\phi \colon D[t_1,\ldots,t_n; \sigma_1,\ldots,\sigma_n] \to S$ given by Lemma~\ref{isom}. Since the $a_i$ commute, the monomials $M_1,\ldots,M_k$ are images via $\phi$ of respective monomials $N_1,\ldots,N_k$ in $t_1,\ldots,t_n$, and the $N_i$ are distinct since the $M_i$ are distinct. Thus $f = b_1N_1+\ldots+b_kN_k$ is a non-zero polynomial in $ D[t_1,\ldots,t_n;\sigma_1,\ldots,\sigma_n]$, and we have $\phi(f) = \sum b_i M_i = 0$. 

Conversely, if $f \in  D[t_1,\ldots,t_n;\sigma_1,\ldots,\sigma_n]$ is a non-zero polynomial then we may write it as $b_1N_1+\ldots+b_kN_k$, where the $N_i$ are monomials in $t_1,\ldots,t_k$ and the $b_i$ are not all zero. If $\phi(f) =0$ then  $\sum b_i \phi(N_i) = 0$, and so the $\phi(N_i)$ are left linearly dependent over $D$. But each $\phi(N_i)$ is a monomial expression in $a_1,\ldots,a_n$, by the definition of $\phi$, hence the $a_i$ are algebraically dependent. 
%
 \end{proof}



\begin{lemma}\label{lem:isom_to_skew}
    Let $S$ be a ring extension of a division algebra $D$.
    Let $a_1,\ldots,a_n$ be $n$ commuting elements of $S$, automorphic over $D$, with respect to commuting automorphisms $\sigma_1,\ldots,\sigma_n \in \mathbf{Aut}(D)$. If $a_1,\ldots,a_n$ are algebraically independent over $D$, then the subring $D[a_1,\ldots,a_n]$ of $S$ is isomorphic to the skew polynomial ring $D[t_1,\ldots,t_n; \sigma_1,\ldots,\sigma_n]$.
\end{lemma}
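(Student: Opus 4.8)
The plan is to show that the substitution map attached to $a = (a_1,\ldots,a_n)$ is a ring isomorphism onto $D[a_1,\ldots,a_n]$. First I would invoke Lemma~\ref{isom} to obtain the substitution homomorphism $\phi \colon D[t_1,\ldots,t_n;\sigma_1,\ldots,\sigma_n] \to S$ determined by $\phi(t_i)=a_i$ for all $i$ and $\phi(r)=r$ for all $r \in D$. The two things to verify are then surjectivity onto $D[a_1,\ldots,a_n]$ and injectivity, and both are already supplied by the preceding results.

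For surjectivity I would simply rerun the computation from the proof of Proposition~\ref{equivalent}: since the $a_i$ commute, every element of the subring $D[a_1,\ldots,a_n]$ can be written as a finite sum $\sum_{\mathbf{i}} r_{\mathbf{i}} a_1^{i_1}\cdots a_n^{i_n}$ with $r_{\mathbf{i}} \in D$, which is exactly the image under $\phi$ of the polynomial $\sum_{\mathbf{i}} r_{\mathbf{i}} t_1^{i_1}\cdots t_n^{i_n}$. Hence the image of $\phi$ is precisely $D[a_1,\ldots,a_n]$, and not some larger or smaller subring, because $\phi$ fixes $D$ pointwise and carries the generators $t_i$ to the generators $a_i$. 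For injectivity I would argue $\ker\phi = \{0\}$: by definition $f \in \ker\phi$ if and only if $f(a_1,\ldots,a_n)=0$, and the lemma immediately preceding this one asserts that a \emph{nonzero} polynomial $f$ with $f(a_1,\ldots,a_n)=0$ exists if and only if $a_1,\ldots,a_n$ are algebraically dependent over $D$. Under the standing hypothesis of algebraic independence, no such nonzero $f$ exists, so $\ker\phi=\{0\}$.

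Combining the two steps, $\phi$ restricts to a ring isomorphism $D[t_1,\ldots,t_n;\sigma_1,\ldots,\sigma_n] \xrightarrow{\sim} D[a_1,\ldots,a_n]$, which is the assertion. I do not anticipate any genuine obstacle in this argument: the entire content is carried by the surjectivity computation of Proposition~\ref{equivalent} together with the algebraic-dependence lemma just proved, so the proof amounts to packaging these into the statement that $\phi$ is bijective. The only point deserving a word of care is confirming that the image equals $D[a_1,\ldots,a_n]$ exactly, which is the surjectivity remark above.
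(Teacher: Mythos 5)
Your proposal is correct and follows essentially the same route as the paper: invoke Lemma~\ref{isom} for the substitution homomorphism $\phi$, use the algebraic-dependence lemma to conclude $\ker\phi=\{0\}$ from algebraic independence, and identify the image with $D[a_1,\ldots,a_n]$. The only difference is that you spell out the surjectivity computation (via the argument of Proposition~\ref{equivalent}), which the paper simply asserts; this is a harmless and slightly more complete packaging of the same proof.
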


\begin{proof}
    It follows from Lemma~\ref{isom} and Definition~\ref{def:substituion} that the substitution map $\phi(f) = f(a_1,\ldots,a_n)$ from the skew polynomial ring $R=D[t_1,\ldots,t_n; \sigma_1,\ldots,\sigma_n]$ to $S$ is a ring homomorphism. Since $a_1,\ldots,a_n$ are algebraically independent over $D$, $\mathbf{ker}(f)=0$.
    Thus, $\phi$ is injective. Hence, $\phi$ induces an isomorphism from $R$ to its image $\mathbf{Im}(\phi)=D[a_1,\ldots,a_n]$, which is a subring of $S$.
\end{proof}

\subsection{Normalizability}

We now begin our study of the ``normalizability" property for automorphically finitely generated extensions of division algebras.

\begin{definition}\label{def:auto_normalizability}
Let $S$ be a ring extension of a division algebra $D$. We say that $S$ is \textbf{automorphically normalizable} over $D$ if there exists a non-negative integer $n$ and commuting elements $a_1,\ldots,a_n \in S$, automorphic over $D$ with respect to commuting automorphisms $\sigma_1,\ldots,\sigma_n \in \mathbf{Aut}(D)$, such that:
\begin{enumerate}
    \item $a_1,\ldots,a_n$ are left algebraically independent over $D$, and
    \item $S$ is finite as a left module over the subring $D[a_1,\ldots,a_n]$ of $S$.
\end{enumerate}
If in addition $\sigma_1=\ldots=\sigma_n=\mathbf{id}_D$, 
that is if $a_1,\ldots,a_n$ centralize $D$, then we say that $S$ is \textbf{centrally normalizable} over $D$.
\end{definition}

Note that, in the above definition, the subring $D[a_1,\ldots,a_n]$ of $S$ is isomorphic to the skew polynomial ring $D[a_1,\ldots,a_n;\sigma_1,\ldots,\sigma_n]$ (see Lemma~\ref{lem:isom_to_skew}). Thus, the automorphically normalizable extensions of $D$ are precisely those which are finite extensions of skew polynomial rings of the form $D[t_1,\ldots,t_n;\sigma_1,\ldots,\sigma_n]$.

\begin{definition}\label{def:sigma-normalizability}
Let $D$ be a division algebra. A tuple $(\sigma_1,\ldots,\sigma_n)$ of commuting automorphisms of $D$ is called \textbf{automorphically} (resp. \textbf{centrally}) \textbf{normalizable} over $D$ if every automorphically finitely generated extension of the form $S=D[a_1,\ldots,a_n;\sigma_1,\ldots,\sigma_n]$ of $D$ is automorphically (resp. centrally) normalizable over $D$. Equivalently, $(\sigma_1,\ldots,\sigma_n)$ is automorphically normalizable over $D$ if every quotient ring of $D[t_1,\ldots,t_n; \sigma_1,\ldots,\sigma_n]$ by a proper two-sided ideal is automorphically (resp. centrally) normalizable over $D$. 
\end{definition}

\begin{remark}\label{rem:normalizability}
In the above definition, the normalizability of a tuple is clearly independent of the order of the automorphisms. That is, if $\pi$ is a permutation in $S_n$, then the tuple $(\sigma_1,\ldots,\sigma_n)$ is automorphically (centrally) normalizable over $D$ if and only if the tuple $(\sigma_{\pi(1)},\ldots,\sigma_{\pi(n)})$ is automorphically (centrally) normalizable over $D$.
\end{remark}


We concludes this section with the following generalization of Zariski's lemma (Proposition \ref{ZariMain} of the introduction):

\begin{proposition}\label{Zari} 
Let $S$ be a ring extension of a division algebra $D$ such that $S$ is automorphically normalizable over $D$. 
If $S$ is a division algebra, then $S$ is finite over $D$. 
\end{proposition}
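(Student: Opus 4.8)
The plan is to prove that if $S$ is automorphically normalizable over $D$ and $S$ is itself a division algebra, then $S$ is finite as a left $D$-module. By Definition~\ref{def:auto_normalizability}, there exist commuting elements $a_1,\ldots,a_n \in S$, automorphic over $D$ with respect to commuting automorphisms $\sigma_1,\ldots,\sigma_n$, which are left algebraically independent over $D$, and such that $S$ is finite as a left module over the subring $T := D[a_1,\ldots,a_n]$. By Lemma~\ref{lem:isom_to_skew}, $T \cong D[t_1,\ldots,t_n;\sigma_1,\ldots,\sigma_n]$. The goal reduces to showing that $n = 0$, for then $T = D$ and finiteness of $S$ over $T$ is exactly finiteness over $D$.

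So the heart of the matter is to rule out $n \geq 1$, using crucially that $S$ is a division algebra. First I would recall from the preliminaries in \S\ref{subsect:a.f.g.} that the skew polynomial ring $T$ is left Noetherian, so that finiteness of $S$ over $T$ implies that $S$ is left integral over $T$ (via \cite[Lemma~2.2]{ParanVo2023}). In particular, since $a_1 \in T$ is a nonzero element of the division algebra $S$, its inverse $a_1^{-1} \in S$ is integral over $T$: there is a monic relation
\begin{equation*}
(a_1^{-1})^m + c_{m-1}(a_1^{-1})^{m-1} + \cdots + c_1 a_1^{-1} + c_0 = 0,
\end{equation*}
with coefficients $c_j \in T$, written on the appropriate side. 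The strategy is then to multiply through by a suitable power of $a_1$ to clear denominators and derive an algebraic relation among $a_1,\ldots,a_n$ over $D$ that contradicts their algebraic independence. Concretely, multiplying on the right by $a_1^{m}$ should produce an equation expressing $1$ (or a unit) as an element of the ideal of $T$ generated by $a_1$, which is impossible in a skew polynomial ring because $a_1 = t_1$ lies in the augmentation-type ideal: reducing modulo the two-sided ideal $(t_1,\ldots,t_n)$ leaves a nonzero element of $D$ on one side and $0$ on the other.

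The main obstacle I anticipate is bookkeeping with the skew structure: because $T$ is noncommutative, the integral-dependence relation must be handled with care regarding left versus right coefficients, and clearing powers of $a_1$ will twist the coefficients by the automorphisms $\sigma_i$. I would therefore work inside the isomorphic model $D[t_1,\ldots,t_n;\sigma_1,\ldots,\sigma_n]$ and exploit the monomial basis $\{t_1^{i_1}\cdots t_n^{i_n}\}$: the cleanest route is to observe that $a_1^{-1}$ integral over $T$ forces a relation that, after multiplying by $a_1^{m}$, shows $a_1^{m}$ lies in the left $T$-submodule generated by lower powers, i.e. yields $g(a_1,\ldots,a_n) = 0$ for a nonzero $g \in T$; algebraic independence (the Lemma preceding Lemma~\ref{lem:isom_to_skew}) then gives the contradiction. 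Once $n = 0$ is forced, the conclusion $S$ finite over $D$ is immediate, completing the proof.
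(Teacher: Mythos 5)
Your proposal is correct and follows essentially the same route as the paper's proof: reduce to showing $n=0$, use left Noetherianity of the skew polynomial subring to upgrade finiteness of $S$ to left integrality, write a monic integral relation for $a_1^{-1}$, and clear denominators by right-multiplying by a power of $a_1$. The only cosmetic difference is the endgame: the paper multiplies by $a_1^{k-1}$ to conclude $a_1^{-1} \in T$, contradicting that $t_1$ is not invertible in a skew polynomial ring, whereas you multiply by $a_1^{m}$ to conclude $1 \in Ta_1$, contradicting algebraic independence via the constant-term (augmentation) argument --- the two contradictions are interchangeable.
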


\begin{proof} 
Since $S$ is automorphically normalizable over $D$, 
there exist a non-negative integer $n$ and 
commuting elements $a_1,\ldots,a_n$ of $S$, automorphic over $D$ with respect to commuting automorphisms $\sigma_1,\ldots,\sigma_n \in \mathbf{Aut}(D)$, such that: the subring $R=D[a_1,\ldots,a_n]$ of $S$ is isomorphic to the skew polynomial ring $D[t_1,\ldots,t_n;\sigma_1,\ldots,\sigma_n]$, and $S$ is finite over $R$.

We claim that $n=0$.
Assume to the contrary, that $n>0$, then $a_1$ is not invertible in $R$ (but $a_1$ is invertible in $S$).
By a noncommutative form of Hilbert's Basis Theorem \cite[Theorem~1.17,~page~17]{Goodearl2004}, it follows that $R$ is left Noetherian.
Therefore, by \cite[Lemma~2.3]{ParanVo2023}, $S$ is integral over $R$.
In particular, the element $a_1^{-1} \in S$ is integral over $R$.
Thus, there exists a positive integer $k$ and elements $r_0,\ldots,r_{k-1} \in R$ such that
$$a_1^{-k}+r_{k-1}a_1^{-(k-1)}+\ldots+r_1a_1^{-1}+r_0=0.$$
By multiplying both sides by $a_1^{k-1}$, we obtain
$$a_1^{-1}=-\left(r_{k-1}+\ldots+r_1a_1^{k-2}+r_0a^{k-1}\right),$$
which is an element in $R$.
This contradicts to the fact that $a_1$ is not invertible in $R$.
Thus $n=0$ and $S$ is finite over $R=D$.
\end{proof}


\section{Normalizability of constant tuples}\label{sec_noether}


In the following, given a division algebra $D$, we denote by $D[t_1,\ldots,t_d;\sigma]$ the ring of skew polynomials, $D[t_1,\ldots,t_d;\sigma,\ldots,\sigma]$, for any $\sigma \in \mathbf{Aut}(D)$. 
In the case where $\sigma=\mathbf{id}_D$, we will drop $\sigma$ and write $D[t_1,\ldots,t_d]$ to denote the ring of polynomials in $d$ central variables over $D$ (see Notation~\ref{not:extensions}).
Let $D_{\sigma}$ be the fixed subring of $D$ via $\sigma$, i.e.
$D_{\sigma} = \{r \in D \,|\, \sigma(r)=r\}$.
We also denote $F=\mathcal{Z}(D) \cap D_{\sigma}$, where $\mathcal{Z}(D)$ the center of $D$.
The next result is a special case of \cite[Theorem 1.2]{Paran2023} and the combinatorial Nullstellensatz \cite{Michalek2010}. 


\begin{lemma}\label{comb} 
Let $D$ be a division algebra. Let $f \in D[t_1,\ldots,t_d]$ be a nonzero polynomial of total degree $m=\sum_{i=1}^d {k_i}$, where each $k_i$ is a non-negative integer, such that the coefficient of $t_1^{k_1}\cdot\ldots\cdot t_d^{k_d}$ in $f$ is non-zero. 
Let $A_1,\ldots,A_d$ be finite subsets of $F=\mathcal{Z}(D)$ such that $|A_i| > k_i$ for $i = 1,2,\ldots,d$. 
Then there is a point in $A_1 \times \dots \times A_d$ at which $f$ does not vanish. 
\end{lemma}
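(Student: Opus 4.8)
The plan is to prove this by induction on the number of variables $d$, reducing the multivariate statement to the one-variable case, which is essentially the combinatorial Nullstellensatz.

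For the base case $d=1$, I would argue as follows. We have a nonzero polynomial $f \in D[t_1]$ of degree $m = k_1$, whose leading coefficient (the coefficient of $t_1^{k_1}$) is nonzero, and a subset $A_1 \subseteq F = \mathcal{Z}(D)$ with $|A_1| > k_1$. Suppose for contradiction that $f$ vanishes at every point of $A_1$. Since the elements of $A_1$ lie in the center $\mathcal{Z}(D)$, the usual theory of polynomial division applies: each $a \in A_1$ is a root, and because $a$ is central we can factor out $(t_1 - a)$ on the right. The key point I need is that a polynomial of degree $k_1$ over $D$ (a division ring) cannot have more than $k_1$ \emph{central} roots; more precisely, if $a_1, \ldots, a_r \in A_1$ are distinct central roots, then $f$ is divisible by $\prod (t_1 - a_i)$, forcing $\deg f \geq r$. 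Having more than $k_1$ roots in $A_1$ then contradicts $\deg f = k_1$. Since the roots lie in the center, the subtleties of noncommutative root-counting (e.g. conjugacy classes in Gordon--Motzkin) do not arise, and the argument is as clean as in the commutative case.

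For the inductive step, I would write $f \in D[t_1, \ldots, t_d]$ as a polynomial in $t_d$ with coefficients in $D[t_1, \ldots, t_{d-1}]$:
\[
f = \sum_{j=0}^{k_d} g_j(t_1,\ldots,t_{d-1}) \, t_d^{\,j},
\]
where I choose the top index so that $g_{k_d}$ contains the monomial $t_1^{k_1} \cdots t_{d-1}^{k_{d-1}}$ with nonzero coefficient; this is exactly the hypothesis that the coefficient of $t_1^{k_1}\cdots t_d^{k_d}$ in $f$ is nonzero. By the inductive hypothesis applied to $g_{k_d}$ with the subsets $A_1, \ldots, A_{d-1}$ (whose sizes exceed $k_1, \ldots, k_{d-1}$ respectively), there is a point $(a_1, \ldots, a_{d-1}) \in A_1 \times \cdots \times A_{d-1}$ at which $g_{k_d}$ does not vanish. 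Now specialize $f$ at this point to obtain a polynomial $\tilde f(t_d) = f(a_1, \ldots, a_{d-1}, t_d) \in D[t_d]$. The crucial observation is that since $a_1, \ldots, a_{d-1} \in F = \mathcal{Z}(D)$ are central, substitution is a ring homomorphism and commutes with reading off the $t_d^{k_d}$-coefficient, so the leading coefficient of $\tilde f$ is $g_{k_d}(a_1,\ldots,a_{d-1}) \neq 0$. Hence $\tilde f$ is a nonzero polynomial of degree exactly $k_d$ with nonzero top coefficient, and the $d=1$ case gives an $a_d \in A_d$ with $\tilde f(a_d) \neq 0$, i.e. $f(a_1,\ldots,a_d) \neq 0$.

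The main obstacle to watch is the role of centrality: over a division ring, substitution of arbitrary elements is not a homomorphism and the degree/leading-coefficient bookkeeping breaks down, as does root-counting. The hypothesis $A_i \subseteq \mathcal{Z}(D)$ is precisely what rescues every step — it makes the points central so that evaluation is multiplicative, leading coefficients specialize correctly, and the commutative root-count bound holds. I would therefore be careful at each specialization to invoke centrality explicitly. One should also note that the statement is cited as a special case of \cite[Theorem 1.2]{Paran2023} together with the combinatorial Nullstellensatz \cite{Michalek2010}, so an alternative (and perhaps intended) route is simply to quote those results; the inductive argument above is really just the proof of the combinatorial Nullstellensatz adapted to the central-coefficient setting, and I expect the authors' proof to follow one of these two equivalent paths.
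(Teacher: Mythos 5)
Your base case is correct, and your closing guess about the paper is also accurate: the paper's entire proof of this lemma is the citation ``See \cite[Theorem 1.2]{Paran2023}'', so simply quoting that result is the intended route. However, your self-contained inductive argument has a genuine gap in the inductive step, and it is exactly the classical pitfall that makes the combinatorial Nullstellensatz a real theorem rather than an easy induction. The hypothesis controls only the \emph{total} degree of $f$ and the coefficient of the single monomial $t_1^{k_1}\cdots t_d^{k_d}$; it does not give $\deg_{t_d} f\le k_d$. So writing $f=\sum_{j=0}^{k_d}g_j\,t_d^{\,j}$ silently truncates $f$, and the later claim that $\tilde f(t_d)=f(a_1,\ldots,a_{d-1},t_d)$ ``is a nonzero polynomial of degree exactly $k_d$ with nonzero top coefficient'' is false in general: $\tilde f$ can have $t_d$-degree strictly larger than $k_d$, its leading coefficient is then not $g_{k_d}(a_1,\ldots,a_{d-1})$, and the univariate case would require $|A_d|$ to exceed the actual degree of $\tilde f$, which you do not have. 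Indeed, $\tilde f$ can vanish at every point of $A_d$.

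Concretely, take $D=F$ a field with at least three elements and $f=t_1t_2+t_2^2$, so $m=2$, $k_1=k_2=1$, and the coefficient of $t_1t_2$ is $1\neq 0$. Here $g_1=t_1$, and your inductive step asks only for some $a_1\in A_1$ with $a_1\neq 0$. Take $A_1=\{a,b\}$ with $a,b$ nonzero and distinct, suppose the induction returns $a_1=a$, and take $A_2=\{0,-a\}$. Then $\tilde f=a\,t_2+t_2^2$ vanishes at both points of $A_2$ even though $|A_2|>k_2$, so no admissible $a_2$ exists; the lemma survives only because the \emph{other} choice $a_1=b$ works (since $f(b,-a)=a(a-b)\neq 0$), i.e.\ the coordinates cannot be chosen one at a time in the order you propose. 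Symmetrizing does not help either: for $f=t_1t_2+t_1^2+t_2^2$ every variable has degree $2>k_i$, so there is no variable with which your induction could start safely. This is precisely why Michalek's proof \cite{Michalek2010}, and Paran's extension of it to division rings \cite{Paran2023}, proceed by induction on the total degree: one divides $f$ by $t_1-a$ for some $a\in A_1$, checks that the remainder vanishes on the smaller grid and that the quotient still carries the monomial $t_1^{k_1-1}t_2^{k_2}\cdots t_d^{k_d}$ of maximal total degree with nonzero coefficient, and iterates --- rather than specializing $d-1$ variables and invoking the univariate case. Your centrality bookkeeping (evaluation at central points is a ring homomorphism, central roots can be counted as in the commutative case) is fine; it is the induction scheme itself that must be replaced.
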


\begin{proof}
    See \cite[Theorem 1.2]{Paran2023}.
\end{proof}

\begin{lemma}\label{lem:affine_f} 
Let $D$ be a division algebra. Let $f \in D[t_1,\ldots,t_d]$ be a non-zero polynomial. 
If $F=\mathcal{Z}(D)$ is infinite, then there exist  $a_1,\ldots,a_d \in F$ such that $f(a_1,\ldots,a_d) \neq 0$. 
\end{lemma}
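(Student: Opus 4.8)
The plan is to deduce this from the combinatorial Lemma~\ref{comb}, which already handles the case of finitely many points but requires the subsets $A_i$ to be large enough relative to the partial degrees of $f$. Since $F = \mathcal{Z}(D)$ is assumed infinite, I can always produce subsets of any prescribed finite size, so the only work is to choose them correctly and invoke the earlier lemma.

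\medskip

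First I would write $f$ in its standard form and identify a nonzero coefficient. Since $f \neq 0$, there is at least one monomial $t_1^{k_1}\cdot\ldots\cdot t_d^{k_d}$ appearing in $f$ with nonzero coefficient; fix such a multi-exponent $(k_1,\ldots,k_d)$. These $k_i$ are non-negative integers. Next, using that $F$ is infinite, for each $i$ I choose a finite subset $A_i \subseteq F$ with $|A_i| > k_i$ (for instance $|A_i| = k_i + 1$). This is possible precisely because $F$ is infinite, so it contains finite subsets of every cardinality.

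\medskip

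With these subsets in hand, Lemma~\ref{comb} applies directly: the hypotheses are exactly that $f$ is a nonzero polynomial with a distinguished nonzero coefficient at $t_1^{k_1}\cdot\ldots\cdot t_d^{k_d}$, and that $|A_i| > k_i$ for each $i$. The conclusion of that lemma gives a point $(a_1,\ldots,a_d) \in A_1 \times \cdots \times A_d \subseteq F^d$ at which $f$ does not vanish, i.e. $f(a_1,\ldots,a_d) \neq 0$. Since the $a_i$ lie in $F = \mathcal{Z}(D)$, they are central, and the substitution map is the natural one, so this $(a_1,\ldots,a_d)$ is the desired tuple.

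\medskip

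I do not anticipate a genuine obstacle here, as the statement is essentially a passage from ``finite but sufficiently large'' to ``infinite,'' which is routine once the combinatorial Nullstellensatz is available. The only point deserving care is the matching of hypotheses: one must note that Lemma~\ref{comb} needs the total degree to equal $\sum k_i$ with the coefficient at the top monomial nonzero, whereas here I merely pick \emph{some} nonzero monomial. In fact Lemma~\ref{comb} as stated requires $t_1^{k_1}\cdot\ldots\cdot t_d^{k_d}$ to have nonzero coefficient and uses $|A_i|>k_i$; it does not actually force this monomial to be of top total degree once the $A_i$ are chosen relative to the individual $k_i$. Thus choosing $(k_1,\ldots,k_d)$ to be the exponent of any nonzero monomial suffices, and the argument closes.
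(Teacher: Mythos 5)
Your overall route is the same as the paper's---reduce to Lemma~\ref{comb} by exploiting that the infinite center $F$ contains finite subsets of any prescribed size---but the key step fails as written, and your closing paragraph makes the failure explicit rather than repairing it. Lemma~\ref{comb} does \emph{not} apply to an arbitrary monomial with nonzero coefficient: its hypothesis is that $f$ has total degree $m=\sum_{i=1}^d k_i$ and that the coefficient of $t_1^{k_1}\cdots t_d^{k_d}$ is nonzero, i.e.\ the distinguished monomial must attain the top total degree of $f$. Your assertion that the lemma ``does not actually force this monomial to be of top total degree'' is a misreading of its statement, and the condition cannot be dropped: it is the essential hypothesis of the combinatorial Nullstellensatz. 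Concretely, take $D=F$ an infinite field and $f=t_1t_2-t_1$. The monomial $t_1$ has nonzero coefficient, with $k_1=1$ and $k_2=0$, so your recipe permits $A_1=\{0,1\}$ and $A_2=\{1\}$; but $f(a_1,1)=a_1-a_1=0$ for every $a_1$, so $f$ vanishes identically on $A_1\times A_2$. Hence the step ``Lemma~\ref{comb} applies directly'' is invalid for a monomial chosen only to have nonzero coefficient.

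The repair is short and is exactly what the paper does: since the total degree of $f$ is by definition the maximum of $k_1+\cdots+k_d$ over monomials appearing in $f$ with nonzero coefficient, you may choose $(k_1,\ldots,k_d)$ so that the coefficient of $t_1^{k_1}\cdots t_d^{k_d}$ is nonzero \emph{and} $k_1+\cdots+k_d=\deg(f)$. With that choice the hypotheses of Lemma~\ref{comb} are met verbatim, and the remainder of your argument---picking $A_i\subseteq F$ with $|A_i|>k_i$, which is possible because $F$ is infinite---goes through unchanged.
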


\begin{proof} 
Let $\lambda t_1^{k_1}\cdot\ldots\cdot t_d^{k_d}$ with be a monomial appearing in $f$ with  $\lambda \neq 0$ and $\deg(f) = k_1+\ldots+k_d$. Choose any subsets $A_1,\ldots,A_d$ of $F$ such that $|A_i| > k_i$ for each $i$, and apply Lemma~\ref{comb}.
\end{proof}

\begin{lemma}\label{lem:proj_f}
Let $D$ be a division algebra. Let $f \in D[t_1,\ldots,t_d]$ be a non-zero homogeneous polynomial. 
If $F=\mathcal{Z}(D)$ is infinite, then there exist $a_1,\ldots,a_{d-1} \in F$ such that $f(a_1,\ldots,a_{d-1},1) \neq 0$.
\end{lemma}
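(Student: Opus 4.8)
The plan is to reduce this projective nonvanishing statement to the affine one in Lemma~\ref{lem:affine_f} by dehomogenizing $f$ at the variable $t_d$. First I would use Lemma~\ref{isom} to build the substitution homomorphism $\phi \colon D[t_1,\ldots,t_d] \to D[t_1,\ldots,t_{d-1}]$ determined by $\phi(t_i) = t_i$ for $1 \le i \le d-1$ and $\phi(t_d) = 1$; this is legitimate because $1 \in F = \mathcal{Z}(D)$ centralizes $D$ and commutes with the central variables $t_1,\ldots,t_{d-1}$. I then set $g := \phi(f) = f(t_1,\ldots,t_{d-1},1)$.

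The key step, and the only one requiring care, is to check that $g \neq 0$; this is exactly where homogeneity enters. Writing $f = \sum_{\mathbf{i}} c_{\mathbf{i}}\, t_1^{i_1} \cdots t_d^{i_d}$ with $c_{\mathbf{i}} \in D$ and each multi-index $\mathbf{i} = (i_1,\ldots,i_d)$ of total degree $m = \deg f$, the map $\phi$ sends this monomial to $c_{\mathbf{i}}\, t_1^{i_1} \cdots t_{d-1}^{i_{d-1}}$. I would argue that the truncation $\mathbf{i} \mapsto (i_1,\ldots,i_{d-1})$ is injective on multi-indices of fixed total degree $m$: if two such indices agree in their first $d-1$ coordinates, then equality of their coordinate sums forces their last coordinates to agree as well. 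Hence distinct monomials of $f$ map to distinct monomials of $g$, no cancellation can occur, and every nonzero coefficient of $f$ survives in $g$. Since $f \neq 0$, this yields $g \neq 0$.

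Finally, $g$ is a nonzero polynomial in the $d-1$ central variables $t_1,\ldots,t_{d-1}$ over $D$, and $F = \mathcal{Z}(D)$ is infinite, so Lemma~\ref{lem:affine_f} produces $a_1,\ldots,a_{d-1} \in F$ with $g(a_1,\ldots,a_{d-1}) \neq 0$. Since $\phi$ is a ring homomorphism fixing $D$ pointwise and the $a_i$ are central, substituting the $a_i$ commutes with applying $\phi$, giving $f(a_1,\ldots,a_{d-1},1) = g(a_1,\ldots,a_{d-1}) \neq 0$, which is the desired conclusion. (The degenerate case $d = 1$ is immediate: a homogeneous $f \in D[t_1]$ equals $c\,t_1^m$ with $c \neq 0$, and $f(1) = c \neq 0$.)

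I expect the dehomogenization nonvanishing, i.e. $g \neq 0$, to be the main obstacle; the rest is a formal transfer through the already-proved affine lemma. Homogeneity is indispensable, since it is precisely what guarantees that dehomogenization matches the monomials of $f$ bijectively with those of $g$ and thereby rules out cancellation — a safeguard that would be absent for an inhomogeneous polynomial.
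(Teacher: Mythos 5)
Your proposal is correct and follows essentially the same route as the paper, whose entire proof is the one-line instruction to apply Lemma~\ref{lem:affine_f} to the dehomogenization $f(t_1,\ldots,t_{d-1},1)$. You simply make explicit the detail the paper leaves implicit, namely that homogeneity forces $f(t_1,\ldots,t_{d-1},1)\neq 0$ because truncating multi-indices of fixed total degree is injective and so no cancellation can occur.
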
\begin{proof}
Apply Lemma~\ref{lem:affine_f} for $f(t_1,\ldots,t_{d-1},1)$.
\end{proof}

\begin{lemma}\label{lem:ring_linear_subsituting}
    Let $D$ be a division algebra. Let $R=D[t_1,\ldots,t_d;\sigma]$ be a skew polynomial ring, and let $a_1,\ldots,a_{d-1} \in F=\mathcal{Z}(D) \cap D_{\sigma}$.
    Then the map
    $\phi:R \to R$ defined by $\phi(f(t_1,\ldots,t_d))=f(t_1+a_1t_d,\ldots,t_{d-1}+a_{d-1}t_d,t_d)$ is a ring homomorphism.
\end{lemma}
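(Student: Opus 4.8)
The plan is to realize $\phi$ as a substitution map and to invoke Lemma~\ref{isom}. Write $R=D[t_1,\ldots,t_d;\sigma,\ldots,\sigma]$, viewed as a ring extension of $D$, and set $f_i=t_i+a_it_d$ for $1\le i\le d-1$ together with $f_d=t_d$, so that $\phi(f)=f(f_1,\ldots,f_d)$ is exactly the substitution of $f$ at the point $(f_1,\ldots,f_d)$ in the sense of Definition~\ref{def:substituion}. By Lemma~\ref{isom} it suffices to check that $f_1,\ldots,f_d$ are commuting elements of $R$ that are automorphic over $D$ with respect to the (trivially commuting) automorphisms $\sigma,\ldots,\sigma$; once this is done, the substitution at $(f_1,\ldots,f_d)$ is automatically a ring homomorphism $R\to R$ restricting to the identity on $D$, and this homomorphism is precisely $\phi$. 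This is exactly the setting described in Example~\ref{ex:substitution_poly}.

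The entire verification rests on the single observation that each $a_i$ is central in $R$. Indeed, since $a_i\in\mathcal{Z}(D)$ it commutes with every element of $D$; and since $a_i\in D_\sigma$ we have $t_ja_i=\sigma(a_i)t_j=a_it_j$ for every $j$, so $a_i$ also commutes with all the variables. Hence $a_i$ lies in the center of $R$, and this is precisely where the hypothesis $a_i\in F=\mathcal{Z}(D)\cap D_\sigma$ is used.

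Granting this, the two required properties follow by short direct computations. For commutativity, expanding $f_if_j$ and $f_jf_i$ and using that the $t_k$ commute pairwise and that the $a_i$ are central yields $f_if_j=f_jf_i$ for all $i,j$ (the case involving $f_d=t_d$ being immediate from $t_da_i=a_it_d$). For the automorphic property, for any $b\in D$ one computes $f_ib=\sigma(b)t_i+a_i\sigma(b)t_d=\sigma(b)(t_i+a_it_d)=\sigma(b)f_i$, where the middle equality uses $a_i\sigma(b)=\sigma(b)a_i$, again by centrality of $a_i$; the case $f_d=t_d$ is just the defining relation $t_db=\sigma(b)t_d$ of the skew polynomial ring. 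Thus each $f_i$ is automorphic over $D$ with respect to $\sigma$, and Lemma~\ref{isom} applies.

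The step I expect to require the most care is not any single computation but the correct placement of the hypothesis $a_i\in F$: both conditions $a_i\in\mathcal{Z}(D)$ and $a_i\in D_\sigma$ are genuinely needed, the former to move $a_i$ past elements of $D$ (for the automorphic relation) and the latter to move $a_i$ past the variables $t_j$ (for $f_if_j=f_jf_i$). If either condition fails, the substitution map ceases to be multiplicative, so this is the crucial point to get right; everything else reduces to the bookkeeping already packaged in Lemma~\ref{isom}.
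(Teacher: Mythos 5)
Your proof is correct and follows essentially the same route as the paper's: show that the elements $t_1+a_1t_d,\ldots,t_{d-1}+a_{d-1}t_d,t_d$ are automorphic over $D$ with respect to $\sigma$, then invoke the substitution machinery of Example~\ref{ex:substitution_poly} (i.e.\ Lemma~\ref{isom}). You are in fact slightly more thorough than the paper, which verifies only the automorphic relation and leaves implicit the pairwise commutativity of the substituted elements --- precisely the step where the hypothesis $a_i\in D_\sigma$ is used, as you correctly point out.
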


\begin{proof}
	For all $i=1,\ldots,n-1$ and $r \in D$, we have
	$$(t_i+a_it_d) r = r^\sigma t_i + a_i r^\sigma t_d = 
		r^\sigma t_i + r^\sigma a_i t_d
		= r^\sigma (t_i+a_it_d).$$
	Therefore, the elements $t_1+a_1t_d,\ldots,t_{d-1}+a_{d-1}t_d,t_d$ are all automorphic over $D$ with respect to $\sigma$.
    The claim then follows by Example~\ref{ex:substitution_poly}.
\end{proof}

\begin{lemma}\label{lem:poly_f}
Let $D$ be a division algebra. Let $f \in D[t_1,\ldots,t_d;\sigma]$ be a nonzero skew polynomial. 
If $F=\mathcal{Z}(D) \cap D_{\sigma}$ is infinite, then there exist $a \in D$ and $a_1,\ldots,a_{d-1}$ in $F$ such that the polynomial
$$g=a \cdot f(t_1+a_1t_d,\ldots,t_{d-1}+a_{d-1}t_d,t_d)$$
is a monic polynomial in $t_d$ with coefficients in $D[t_1,\ldots,t_{d-1};\sigma]$. (Note that we may view $D[t_1,\ldots,t_d;\sigma]$ as $D[t_1,\ldots,t_{d-1};\sigma][t_d;\sigma]$, as discussed in the beginning of \S\ref{subsect:a.f.g.}.)
\end{lemma}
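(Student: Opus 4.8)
The statement is the skew-polynomial analogue of the standard step in Noether normalization over an infinite field: a generic linear change of the variables $t_1,\dots,t_{d-1}$ (fixing $t_d$) turns $f$ into a polynomial whose leading coefficient in $t_d$ is a nonzero scalar, which we then invert. Set $R = D[t_1,\dots,t_d;\sigma]$. The plan is to (i) isolate the top-degree form of $f$, (ii) perform the substitution $t_i \mapsto t_i + a_i t_d$ with scalars $a_i \in F = \mathcal{Z}(D) \cap D_\sigma$ and read off the coefficient of the top power of $t_d$, and (iii) choose the $a_i$ so that this coefficient is nonzero, using that $F$ is infinite.

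Write $m = \deg f$ and decompose $f = f_m + f_{m-1} + \dots + f_0$ into its homogeneous components in the pairwise commuting variables $t_1, \dots, t_d$, so that $f_m \neq 0$. For any $a_1, \dots, a_{d-1} \in F$, Lemma~\ref{lem:ring_linear_subsituting} guarantees that $\phi(t_i) = t_i + a_i t_d$ (for $i < d$) and $\phi(t_d) = t_d$ extends to a ring endomorphism of $R$, so $\phi(f)$ is a well-defined element of $R$, which for $d = 1$ is simply $f$. The crucial computation is the coefficient of $t_d^m$ in $\phi(f)$. Since each $a_i$ lies in $\mathcal{Z}(D) \cap D_\sigma$, it commutes with every element of $D$ and satisfies $t_d a_i = a_i^\sigma t_d = a_i t_d$; hence the factors $t_i + a_i t_d$ commute with one another and with $t_d$, and ordinary binomial expansion applies. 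A monomial $c_e\, t_1^{e_1} \cdots t_d^{e_d}$ of total degree $|e|$ then contributes its top $t_d$-power $c_e\, a_1^{e_1} \cdots a_{d-1}^{e_{d-1}}\, t_d^{|e|}$, with all remaining terms of strictly smaller $t_d$-degree. Consequently $\deg_{t_d}\phi(f) = m$ and the coefficient of $t_d^m$ equals
$$\lambda := \sum_{|e| = m} c_e\, a_1^{e_1} \cdots a_{d-1}^{e_{d-1}} \in D,$$
where the sum runs over the monomials of $f_m$.

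It remains to choose the $a_i$ so that $\lambda \neq 0$. Let $h(t_1, \dots, t_{d-1}) \in D[t_1, \dots, t_{d-1}]$ be the dehomogenization of $f_m$ obtained by setting $t_d = 1$; distinct degree-$m$ monomials of $f_m$ remain distinct after this substitution, so $h \neq 0$, and by construction $\lambda = h(a_1, \dots, a_{d-1})$ is the evaluation of $h$ at $(a_1, \dots, a_{d-1})$, which is legitimate since the $a_i$ are central (Lemma~\ref{isom} with identity automorphisms). Because $F = \mathcal{Z}(D) \cap D_\sigma$ is infinite, we may choose finite subsets $A_i \subseteq F$ of sufficiently large cardinality and apply Lemma~\ref{comb} to $h$, obtaining a point $(a_1, \dots, a_{d-1}) \in A_1 \times \cdots \times A_{d-1} \subseteq F^{d-1}$ at which $h$ does not vanish, i.e. $\lambda \neq 0$. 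Setting $a = \lambda^{-1} \in D$ and multiplying on the left, $g = a\,\phi(f)$ has leading coefficient $a\lambda = 1$ in $t_d$, while its remaining coefficients lie in $D[t_1, \dots, t_{d-1}; \sigma]$; thus $g$ is monic in $t_d$, as required.

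The only genuinely delicate point is step (iii): one must secure the evaluation point inside $F = \mathcal{Z}(D) \cap D_\sigma$ rather than merely in $\mathcal{Z}(D)$, since this $\sigma$-invariance is precisely what makes $\phi$ a homomorphism in Lemma~\ref{lem:ring_linear_subsituting} and what keeps $\lambda$ a scalar (otherwise the factors $t_i + a_i t_d$ would fail to commute with $t_d$). This is why I apply the combinatorial Nullstellensatz directly over the infinite field $F$, rather than invoking Lemma~\ref{lem:proj_f}, whose hypothesis concerns $\mathcal{Z}(D)$ and whose points therefore need not be $\sigma$-fixed.
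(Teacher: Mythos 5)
Your proof is correct and follows essentially the same route as the paper's: substitute $t_i \mapsto t_i + a_i t_d$ via Lemma~\ref{lem:ring_linear_subsituting}, observe that the coefficient of $t_d^m$ in the result is the evaluation of the (centralized) top homogeneous form of $f$ at $(a_1,\ldots,a_{d-1},1)$, choose the $a_i$ so that this evaluation is nonzero, and left-multiply by its inverse. The one place you deviate --- applying Lemma~\ref{comb} directly with finite subsets $A_i \subseteq F = \mathcal{Z}(D)\cap D_\sigma$ rather than citing Lemma~\ref{lem:proj_f} --- is a sound and in fact slightly more careful step, since Lemma~\ref{lem:proj_f} as literally stated produces points only in $\mathcal{Z}(D)$, whereas the argument (in the paper as well as yours) needs them to be $\sigma$-fixed; the paper glosses over this, and your observation that Lemma~\ref{comb} permits the $A_i$ to be chosen inside any infinite subfield of $\mathcal{Z}(D)$ is exactly the right repair.
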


\begin{proof}
Assume that $f=\sum\limits_{\mathbf{i} \in \mathbb{N}} b_{\mathbf{i}} t_1^{i_1} \cdot \ldots \cdot t_d^{i_d}$ is of degree $m$.
Let $\alpha_1,\ldots,\alpha_{d-1}$ be generic elements of $F$.
According to Lemma~\ref{lem:ring_linear_subsituting}, the substitution $f(t_1+\alpha_1t_d,\ldots,t_{d-1}+\alpha_{d-1}t_d,t_d)$ is well-defined.
For each $k=1,\ldots,d-1$, since $\alpha_k$ is in $F=\mathcal{Z}(D) \cap D_{\sigma}$, we have
\begin{align*}
  (t_k+\alpha_kt_d)^{i_k} &= (\alpha_kt_d)^{i_k} + {\text{ terms of lower degree in } t_d} \\
  &= \alpha_k \cdot \sigma(\alpha_k) \cdot \ldots \cdot \sigma^{i_k-1}(\alpha_k) t_d^{i_k} + {\text{ terms of lower degree in } t_d} \\
  &= \alpha_k^{i_k} t_d^{i_k} + {\text{ terms of lower degree in } t_d}.
\end{align*}
Therefore, 
\begin{align*}
  f(t_1+\alpha_1t_d,&\ldots,t_{d-1}+\alpha_{d-1}t_d,t_d) 
  =  \sum\limits_{\mathbf{i}} b_{\mathbf{i}} \left(\prod\limits_{k=1}^{d-1} (t_k+\alpha_kt_d)^{i_k}\right) \cdot t_d^{i_d}\\
  &= \sum\limits_{\mathbf{i}} b_{\mathbf{i}} \left(\prod\limits_{k=1}^{d-1} \left( \alpha_k^{i_k} t_d^{i_k} + {\text{ terms of lower degree in } t_d} \right) \right) \cdot t_d^{i_d}\\
  &= \sum\limits_{\mathbf{i}} b_{\mathbf{i}} \left( \prod\limits_{k=1}^{d-1} \alpha_k^{i_k} \right) \cdot t_d^{i_1+\ldots+i_d} + {\text{ terms of lower degree in } t_d}\\
  &=f_m(\alpha_1,\ldots,\alpha_{d-1},1) t_d^m + \text{ terms of lower degrees in } t_d.
\end{align*}
Here, $f_m=\sum_{\mathbf{i}} b_{\mathbf{i}} \left( \prod_{k=1}^{d-1} u_k^{i_k} \right)$ is a homogeneous polynomial in ring $D[u_1,\ldots,u_d]$ of central polynomials (thus the substitution $f_m(\alpha_1,\ldots,\alpha_{d-1},1)$ is well-defined).
Note that $f_m$ is identical to the homogeneous part of degree $m$ of $f$, except that the variables in $f_m$ centralize $D$. 
By Lemma~\ref{lem:proj_f}, there exist nonzero elements $a_1,\ldots,a_{d-1}$ in $F$ such that $f_m(a_1,\ldots,a_{d-1},1) \neq 0$.
Set $a=f_m(a_1,\ldots,a_{d-1},1)^{-1}$.
Then the polynomial $g=a \cdot f(t_1+a_1t_d,\ldots,t_{d-1}+a_{d-1}t_d,t_d)$ is monic in $t_d$.
\end{proof}

\begin{lemma}\label{lem:finite-over-normalizable}
Let $D$ be a division algebra and let $D \subseteq S \subseteq T$ be ring extensions.
If $S$ is automorphically (centrally) normalizable over $D$ and if $T$ is finite over $S$, then $T$ is also automorphically (centrally) normalizable over $D$.
\end{lemma}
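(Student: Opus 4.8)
The plan is to reuse, for $T$, the very elements that witness the automorphic normalizability of $S$, and then to close the argument by transitivity of finiteness. Since $S$ is automorphically normalizable over $D$, Definition~\ref{def:auto_normalizability} supplies commuting elements $a_1,\ldots,a_n \in S$, automorphic over $D$ with respect to commuting automorphisms $\sigma_1,\ldots,\sigma_n \in \mathbf{Aut}(D)$, that are left algebraically independent over $D$ and such that $S$ is finite as a left module over the subring $R:=D[a_1,\ldots,a_n]$. Because $S \subseteq T$, these same elements lie in $T$, and I would propose them as the normalizing data for $T$, so the task reduces to checking the two conditions of Definition~\ref{def:auto_normalizability} in the larger ring.

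First I would verify that the defining properties of $a_1,\ldots,a_n$ transfer unchanged from $S$ to $T$. Each relevant property is an equality (or non-equality) among elements that already live in $S$: the $a_i$ commute pairwise in $S$, hence in $T$; the automorphic relations $a_i b = b^{\sigma_i} a_i$ for $b \in D$ hold verbatim in $T$; and left algebraic independence over $D$ persists, since a nontrivial $D$-linear dependence among the monomials $a_1^{i_1}\cdots a_n^{i_n}$ in $T$ would be exactly such a dependence in $S$, contradicting their independence there. Moreover, because all the generators $a_1,\ldots,a_n$ lie in $S$, the subring $D[a_1,\ldots,a_n]$ computed inside $T$ coincides with the one computed inside $S$, so $R$ is the same subring in either ambient ring.

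It then remains to establish condition (2) for $T$, namely that $T$ is finite as a left $R$-module, and this is where transitivity of finiteness enters. Writing $S=\sum_{i=1}^p R\,s_i$ as a finitely generated left $R$-module (from the normalizability of $S$) and $T=\sum_{j=1}^q S\,t_j$ as a finitely generated left $S$-module (from the hypothesis that $T$ is finite over $S$), one computes $T=\sum_{j} \big(\sum_i R\,s_i\big)t_j = \sum_{i,j} R\,(s_i t_j)$, using the inclusions $R \subseteq S \subseteq T$ and associativity of multiplication in $T$; hence $\{s_i t_j\}$ is a finite generating set for $T$ as a left $R$-module. Combined with the previous paragraph, this shows that $a_1,\ldots,a_n$ witness the automorphic normalizability of $T$ over $D$. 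For the central case one observes that if $S$ is centrally normalizable then the witnessing automorphisms may be taken as $\sigma_1=\cdots=\sigma_n=\mathbf{id}_D$, a choice preserved verbatim in the argument above, so $T$ is centrally normalizable as well. I anticipate no real obstacle: the only points requiring care are confirming that algebraic independence and the subring $R$ are inherited by the larger ring, and the routine bookkeeping in the transitivity step.
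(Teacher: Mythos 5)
Your proposal is correct and follows essentially the same route as the paper's proof: reuse the witnessing elements $a_1,\ldots,a_n$ of $S$ (whose commutativity, automorphic relations, and algebraic independence automatically persist in $T$) and conclude by transitivity of finiteness, $R \subseteq S \subseteq T$. The paper simply states the transitivity step without the explicit generating-set computation $T=\sum_{i,j} R\,(s_i t_j)$ that you spell out.
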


\begin{proof}
Since $S$ is automorphically (centrally) normalizable over $D$, there exist commuting elements $a_1,\ldots,a_n$ in $S$ that are automorphic over $D$ (which centralize $D$), algebraically independent over $D$, and such that $S$ is finite over $R=D[a_1,\ldots,a_n]$. Since $T$ is finite over $S$ and $S$ is finite over $R$, $T$ is also finite over $R$.
Thus $T$ is automorphically (centrally) normalizable over $D$.
\end{proof}

We can now prove Theorem \ref{constant_tuple} of the introduction:

\begin{theorem}\label{thm:normalization_one_auto}
Let $D$ be a division algebra and let $\sigma$ be an automorphism of $D$ such that the field $F=\mathcal{Z}(D) \cap D_{\sigma}$ is infinite.
Then any constant tuple $(\sigma,\ldots,\sigma)$ is automorphically normalizable over $D$.
\end{theorem}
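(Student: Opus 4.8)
The plan is to prove the equivalent statement from Definition~\ref{def:sigma-normalizability}: for every $n$, each quotient $S = D[t_1,\ldots,t_n;\sigma]/I$ by a proper two-sided ideal $I$ is automorphically normalizable over $D$. I would argue by induction on $n$, adapting the classical proof of Noether normalization to the skew setting. The base case $n=0$ is trivial: the skew polynomial ring is just $D$, whose only proper two-sided ideal is $\{0\}$, so $S = D$ is automorphically normalizable (with no generators, $S$ is finite over $D$ itself). For the inductive step, if $I = \{0\}$ then $S = D[t_1,\ldots,t_n;\sigma]$ is its own normalization (the variables $t_1,\ldots,t_n$ are automorphic over $D$ with respect to $\sigma$ and algebraically independent), so assume $I \neq \{0\}$ and fix a nonzero $f \in I$.

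The first key step is a change of variables turning $f$ into a polynomial that is monic in the last variable. Applying Lemma~\ref{lem:poly_f} to $f$ (this is exactly where the hypothesis that $F = \mathcal{Z}(D)\cap D_\sigma$ is infinite is used), I obtain elements $a_1,\ldots,a_{n-1} \in F$ and $a \in D$, $a \neq 0$, such that $g = a \cdot f(t_1 + a_1 t_n,\ldots,t_{n-1}+a_{n-1}t_n,t_n)$ is monic in $t_n$ of some degree $k$, with coefficients in $R' := D[t_1,\ldots,t_{n-1};\sigma]$. By Lemma~\ref{lem:ring_linear_subsituting} the substitution $\phi \colon t_i \mapsto t_i + a_i t_n$ (for $i<n$), $t_n \mapsto t_n$, is a ring homomorphism of $R := D[t_1,\ldots,t_n;\sigma]$; since it fixes $D$ pointwise and admits the inverse $t_i \mapsto t_i - a_i t_n$, it is an automorphism of $R$ fixing $D$. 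Hence $\phi$ descends to a $D$-algebra isomorphism $R/I \xrightarrow{\sim} R/\phi(I)$, so it suffices to prove normalizability of $R/\phi(I)$. As $a$ is invertible in $D$ and $\phi(f) \in \phi(I)$, the monic polynomial $g = a\,\phi(f)$ lies in $\phi(I)$; after renaming $\phi(I)$ as $I$, I may thus assume that $I$ contains a polynomial $g$ that is monic of degree $k$ in $t_n$ with coefficients in $R'$.

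The second key step is a finiteness argument over the subring in one fewer variable. Viewing $R = R'[t_n;\sigma]$ as an iterated skew polynomial ring, monicity of $g$ in $t_n$ allows a left division algorithm: every $h \in R$ can be written $h = q g + r$ with $\deg_{t_n} r < k$, because subtracting an appropriate left multiple $s\,t_n^{\,d-k} g$ cancels the leading term of $h$ (the twist $\sigma$ only affects lower-order coefficients). Consequently, modulo $I$ every element of $S$ is a left $R'$-combination of $1, t_n,\ldots,t_n^{\,k-1}$, so $S$ is finite as a left module over the image of $R'$ in $S$, namely $\bar R' := R'/(I \cap R')$. Here $I \cap R'$ is a proper two-sided ideal of $R'$ (it cannot contain $1$, since $I$ is proper), so $\bar R'$ is a quotient of $D[t_1,\ldots,t_{n-1};\sigma]$ by a proper two-sided ideal.

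Finally I would invoke the inductive hypothesis and transitivity. By induction the constant tuple of length $n-1$ is automorphically normalizable over $D$, so $\bar R'$ is automorphically normalizable over $D$. Since $S$ is finite over $\bar R'$, Lemma~\ref{lem:finite-over-normalizable} yields that $S$ is automorphically normalizable over $D$, completing the induction. The main obstacle is the monization step: one must ensure both that a suitable linear change of variables renders some element of $I$ monic in one variable \emph{and} that this change of variables respects the skew structure (so that the new coordinates remain automorphic with respect to $\sigma$); both are secured by Lemmas~\ref{lem:ring_linear_subsituting} and \ref{lem:poly_f}, the latter being the only place the infinitude of $F$ is needed. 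Everything else is the standard Noetherian reduction together with the division algorithm in the iterated skew polynomial ring.
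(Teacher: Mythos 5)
Your proof is correct and takes essentially the same route as the paper's: induction on $n$, Lemma~\ref{lem:poly_f} for the monization step (the only place the infinitude of $F$ enters), finiteness over the $(n-1)$-variable subring, and Lemma~\ref{lem:finite-over-normalizable} to conclude. The only difference is presentational: the paper works downstairs with automorphic generators, setting $\tilde{z}_i = z_i - b_i z_n$ and deducing that $\tilde{z}_n$ is left integral over $D[\tilde{z}_1,\ldots,\tilde{z}_{n-1}]$, whereas you work upstairs with the ideal, transport it by the substitution automorphism, and make the finiteness explicit via left division by the monic polynomial.
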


\begin{proof} 
Let $S=D[z_1,\ldots,z_n;\sigma]$ be an arbitrary automorphic extension of $D$. We must prove that $S$ is automorphically normalizable over $D$. We will do so by induction on $n$.
For $n = 0$ we have $S = D$ and there is nothing to prove. 

Assume we have proven the claim up to $n-1$, and let us prove it for $n$.
Since $z_1,\ldots,z_n$ are automorphic over $D$ with respect to $\sigma$ over $D$, the substitution map $f \mapsto f(z_1,\ldots,z_n)$ for skew polynomials in $D[t_1,\ldots,t_n;\sigma]$ is well-defined and is a ring homomorphism, by Lemma \ref{isom}. If the elements $z_1,\ldots,z_n$ are algebraically independent over $D$, then it follows from Lemma~\ref{lem:isom_to_skew} that $S$ is isomorphic to the skew polynomial ring $D[t_1,\ldots,t_n;\sigma]$. 
Thus $S$ is automorphically normalizable over $D$ (by choosing $d=n$ and $a_i=z_i$ in Definition~\ref{def:auto_normalizability}).

Let us consider the case when the elements $z_1,\ldots,z_n$ are algebraically dependent over $D$.
Then there is a nonzero skew polynomial $f\in D[t_1,\ldots,t_n;\sigma]$ such that $f(z_1,\ldots,z_n) = 0$.
By Lemma~\ref{lem:poly_f}, there exist an element $a \in D$, a tuple $(b_1,\ldots,b_{n-1}) \in F^{n-1}$, and a non-negative integer $m$ such that $g = af(t_1+b_1t_n,\ldots,t_{n-1}+b_{n-1}t_n,t_n)$ is a monic polynomial in $t_n$, of the form
\begin{equation}\label{eq:g}
g = t_n^m+\text{ terms in } (t_1,\ldots,t_n) \text{ of lower degree in }t_n.
\end{equation} 
Note that, since the elements $t_1+b_1t_n,\ldots,t_{n-1}+b_{n-1}t_n,t_n$ are commuting and automorphic over $D$ with respect to $\sigma$, the substitution of $f$ at $(t_1+b_1t_n,\ldots,t_{n-1}+b_{n-1}t_n,t_n)$ is well-defined (see Example~\ref{ex:substitution_poly}).

Set $\tilde{z}_1=z_1-b_1z_n,\ldots,\tilde{z}_{n-1}=z_{n-1}-b_{n-1}z_n$ and $\tilde{z}_n=z_n$.
Then the elements $\tilde{z}_i$ are commuting and automorphic over $D$ with respect to $\sigma$.
We can evaluate the skew polynomial $g$ at these elements and obtain
$$g(\tilde{z}_1,\ldots,\tilde{z}_n) = a f(\tilde{z}_1+b_1\tilde{z}_n,\ldots,\tilde{z}_{n-1}+b_{n-1}\tilde{z}_n,\tilde{z}_n) 
= a f(z_1,\ldots,z_n)=0.$$
Therefore, by substituting $(t_1,\ldots,t_n)$ at $(\tilde{z}_1,\ldots,\tilde{z}_n)$ in Eq.~\eqref{eq:g}, we get that
$$0 = \tilde{z}_n^m + \text{ terms in } (\tilde{z}_1,\ldots,\tilde{z}_n) \text{ of lower degree  in } \tilde{z}_n.$$
Thus $\tilde{z}_n$ is left integral over the sub-algebra $R = D[\tilde{z}_1,\ldots,\tilde{z}_{n-1}]$ of $S$. 
As a consequence, $S=D[z_1,\ldots,z_n]=D[\tilde{z}_1,\ldots,\tilde{z}_n]$ is finite over $R$.
By the induction hypothesis, $R$ is automorphically normalizable over $D$.
It follows from Lemma~\ref{lem:finite-over-normalizable} that $S$ is also automorphically normalizable over $D$.
\end{proof}



%
%

We can now prove Corollary \ref{finite_orders} of the introduction:

\begin{corollary}\label{cor:normalization_cyclic_positive}
Let $D$ be a division algebra and $\sigma_1,\ldots,\sigma_n$ be commuting elements in $\mathbf{Aut}(D)$ such that $F=\mathcal{Z}(D) \cap \big( \cap_{i=1}^n D_{\sigma_i} \big)$ is infinite.
If $\sigma_1^{d_1}=\ldots=\sigma_n^{d_n}$ for some positive integers $d_1,\ldots,d_n$, then the tuple $(\sigma_1,\ldots,\sigma_n)$ is automorphically normalizable over $D$.

In particular, if $\sigma_1,\ldots,\sigma_n$ are of finite orders, then the tuple $(\sigma_1,\ldots,\sigma_n)$ is centrally normalizable over $D$.
\end{corollary}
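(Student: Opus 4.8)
The plan is to reduce the general tuple $(\sigma_1,\ldots,\sigma_n)$ to the constant-tuple case already settled in Theorem~\ref{thm:normalization_one_auto}, by passing to the common power $\tau = \sigma_1^{d_1} = \cdots = \sigma_n^{d_n}$. Let $S = D[z_1,\ldots,z_n;\sigma_1,\ldots,\sigma_n]$ be an arbitrary quotient of $D[t_1,\ldots,t_n;\sigma_1,\ldots,\sigma_n]$; I must show that $S$ is automorphically normalizable over $D$. The key observation is that although $z_i$ is automorphic over $D$ only with respect to $\sigma_i$, its $d_i$-th power $w_i := z_i^{d_i}$ satisfies $w_i r = r^{\sigma_i^{d_i}} w_i = r^{\tau} w_i$ for all $r \in D$, so all of the commuting elements $w_1,\ldots,w_n$ are automorphic over $D$ with respect to the \emph{single} automorphism $\tau$.

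First I would show that $S$ is finite as a left module over the subring $T := D[w_1,\ldots,w_n]$. Since $S$ is spanned over $D$ by the monomials $z_1^{i_1}\cdots z_n^{i_n}$, and since the $z_i$ commute, Euclidean division $i_k = d_k q_k + r_k$ with $0 \le r_k < d_k$ yields $z_1^{i_1}\cdots z_n^{i_n} = (w_1^{q_1}\cdots w_n^{q_n})(z_1^{r_1}\cdots z_n^{r_n})$; as $D \subseteq T$, every element of $S$ then lies in $\sum T \cdot z_1^{r_1}\cdots z_n^{r_n}$, a sum over the finitely many residue exponents $0 \le r_k < d_k$. Hence $S$ is finite over $T$.

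Next I would identify $T$ as a quotient of a constant-tuple skew polynomial ring and invoke the constant-tuple theorem. By Lemma~\ref{isom}, the commuting $\tau$-automorphic elements $w_1,\ldots,w_n$ induce a substitution homomorphism $D[s_1,\ldots,s_n;\tau] \to S$ with image $T$, so that $T = D[w_1,\ldots,w_n;\tau]$ is an automorphically finitely generated extension for the constant tuple $(\tau,\ldots,\tau)$. To apply Theorem~\ref{thm:normalization_one_auto} to $\tau$ I must check that $\mathcal{Z}(D) \cap D_{\tau}$ is infinite; this follows because $D_{\sigma_i} \subseteq D_{\sigma_i^{d_i}} = D_{\tau}$ for each $i$, whence $F = \mathcal{Z}(D) \cap \bigcap_{i} D_{\sigma_i} \subseteq \mathcal{Z}(D) \cap D_{\tau}$, and $F$ is infinite by hypothesis. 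Theorem~\ref{thm:normalization_one_auto} then gives that $(\tau,\ldots,\tau)$ is automorphically normalizable, so its quotient $T$ is automorphically normalizable over $D$. Finally, applying Lemma~\ref{lem:finite-over-normalizable} to $D \subseteq T \subseteq S$, with $T$ normalizable and $S$ finite over $T$, yields that $S$ is automorphically normalizable over $D$.

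For the final assertion, when each $\sigma_i$ has finite order I would take $d_i$ to be the order of $\sigma_i$, so that $\tau = \sigma_i^{d_i} = \mathbf{id}_D$. Then each $w_i = z_i^{d_i}$ centralizes $D$, and $T = D[w_1,\ldots,w_n]$ is centrally finitely generated. Running the argument of Theorem~\ref{thm:normalization_one_auto} with $\sigma = \mathbf{id}_D$ (where ``automorphic'' becomes ``centralizing'' at every step, and the linear changes of variable $\tilde z_i = z_i - b_i z_n$ keep the generators central because $b_i \in \mathcal{Z}(D)$) shows that $(\mathbf{id}_D,\ldots,\mathbf{id}_D)$ is in fact centrally normalizable, so $T$ is centrally normalizable; the central version of Lemma~\ref{lem:finite-over-normalizable} then upgrades the conclusion for $S$ to central normalizability. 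The hard part is really only this last refinement: tracking that the constant-tuple argument, specialized to the identity, preserves centrality rather than merely automorphy. The module-finiteness of $S$ over $T$ and the infinitude check $F \subseteq \mathcal{Z}(D) \cap D_{\tau}$ are both routine.
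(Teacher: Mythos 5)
Your proof is correct and follows essentially the same route as the paper's: set $w_i = z_i^{d_i}$, observe that these commuting elements are all automorphic over $D$ with respect to the single automorphism $\tau=\sigma_1^{d_1}$, apply Theorem~\ref{thm:normalization_one_auto} to the constant tuple $(\tau,\ldots,\tau)$ (using $F \subseteq \mathcal{Z}(D)\cap D_{\tau}$ to verify the infinitude hypothesis), and conclude via Lemma~\ref{lem:finite-over-normalizable} since $S$ is finite over $D[w_1,\ldots,w_n]$. The only difference is that you spell out two points the paper leaves implicit: the Euclidean-division argument for module-finiteness of $S$ over $D[w_1,\ldots,w_n]$, and the observation that when each $d_i$ is the order of $\sigma_i$ (so $\tau=\mathbf{id}_D$) the constant-tuple argument preserves centrality throughout, yielding the ``in particular'' claim.
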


\begin{proof}
Let $S=D[z_1,\ldots,z_n;\sigma_1,\ldots,\sigma_n]$ be an arbitrary automorphic extension of $D$.
Set $\sigma=\sigma_1^{d_1}$, and for each $i=1,\ldots,n$, we set $w_i=z_i^{d_i}$.
Then for every $b \in D$, we have
\begin{align*}
w_ib = z_i^{d_i} b = \sigma_i^{d_i}(b) z_i^{d_i} = \sigma(b) w_i.  
\end{align*}
Therefore, $w_i$ is automorphic over $D$ with respect to $\sigma$.
Since, the elements $z_i$ commute, the elements $w_i$ also commute.
Thus, the ring $R=D[w_1,\ldots,w_n;\sigma]$ is automorphically finitely generated over $D$.
Note that $F \subseteq \mathcal{Z}(D) \cap D_{\sigma}$.
Therefore, $\mathcal{Z}(D) \cap D_{\sigma}$ is an infinite set.
It follows from Theorem~\ref{thm:normalization_one_auto} that $R$ is automorphic normalizable over $D$.
Observe that $S$ is finite over $R$, therefore, by Lemma~\ref{lem:finite-over-normalizable}, $S$ is automorphically normalizable over $D$.
Hence, the tuple $(\sigma_1,\ldots,\sigma_n)$ is automorphically normalizable over $D$.
\end{proof}

The above corollary gives a sufficient condition for a tuple of automorphisms in $\mathbf{Aut}(D)$ to be automorphically normalizable over $D$.
The question whether this sufficient condition is ``necessary" is still open.
In Theorem~\ref{thm:notrivial_case} below, we will prove that this condition is indeed ``necessary" in the case where $D=F$ is a field.

\section{Tuples of identity maps and central extensions}\label{sec_center}

In Theorem~\ref{thm:normalization_one_auto}, if the tuple of automorphisms is the identity tuple $(\mathbf{id}_D,\ldots,\mathbf{id}_D)$, (i.e. $\sigma=\mathbf{id}_D$), we will prove that the ``infinite" condition can be dropped.


\begin{lemma}\label{lem:ring_nonlinear_subsituting}
    Let $D$ be a division algebra, let $R=D[t_1,\ldots,t_n]$, and let $d_1,\ldots,d_{n-1}$ be non-negative integers.
    Then the map
    $\phi:R \to R$ defined by $\phi(f(t_1,\ldots,t_n))=f(t_1+t_n^{d_1},\ldots,t_{d-1}+t_n^{d_{n-1}},t_n)$ is a ring homomorphism.
\end{lemma}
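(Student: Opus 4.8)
The plan is to follow the proof of Lemma~\ref{lem:ring_linear_subsituting} almost verbatim, replacing the linear expressions $t_i + a_i t_d$ by the monomial expressions $t_i + t_n^{d_i}$ and appealing to Example~\ref{ex:substitution_poly}. The point of that example is that a tuple of pairwise commuting elements $f_1,\ldots,f_n$ of $R$ that are automorphic over $D$ with respect to the defining automorphisms induces a well-defined substitution homomorphism $f \mapsto f(f_1,\ldots,f_n)$. Here $R = D[t_1,\ldots,t_n]$ is a ring of polynomials in central variables, so all the defining automorphisms are $\mathbf{id}_D$, and the condition ``automorphic over $D$ with respect to $\mathbf{id}_D$'' simply means ``centralizes $D$''.

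First I would set $f_i = t_i + t_n^{d_i}$ for $1 \leq i \leq n-1$ and $f_n = t_n$. To apply Example~\ref{ex:substitution_poly} I need two verifications. The first is that the $f_i$ commute pairwise, which is immediate because the variables $t_1,\ldots,t_n$ commute pairwise in $R$, and hence so do any two polynomials in them. The second is that each $f_i$ centralizes $D$: since each variable $t_j$ centralizes $D$ (i.e.\ $t_j b = b t_j$ for all $b \in D$, as $R$ is a central polynomial ring), every monomial $t_n^{d_i}$ and every $t_i$ centralizes $D$, and therefore so does the sum $f_i = t_i + t_n^{d_i}$.

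With these two facts in hand, Example~\ref{ex:substitution_poly} (equivalently, Lemma~\ref{isom} applied with all defining automorphisms equal to $\mathbf{id}_D$) yields that the map $\phi$ sending $f(t_1,\ldots,t_n)$ to $f(f_1,\ldots,f_n) = f(t_1 + t_n^{d_1},\ldots,t_{n-1}+t_n^{d_{n-1}},t_n)$ is a ring homomorphism from $R$ to $R$, as desired. I do not anticipate any genuine obstacle here. Unlike Lemma~\ref{lem:ring_linear_subsituting}, where the scalars $a_i$ had to be taken in $F = \mathcal{Z}(D)\cap D_{\sigma}$ precisely so that each $a_i t_d$ would remain automorphic with respect to $\sigma$, in the present central case there is no constraint whatsoever on the exponents $d_i$, since the variables already commute with $D$ and no twisting occurs. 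The only mild point to record is that the substitution is performed in the iterated sense of Definition~\ref{def:substituion}, which is legitimate exactly because the commuting and centralizing hypotheses of that definition have just been checked.
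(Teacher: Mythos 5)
Your proof is correct and takes essentially the same route as the paper: the paper's own proof is a one-line appeal to Example~\ref{ex:substitution_poly}, and your argument simply spells out the (routine) verifications that the elements $t_i+t_n^{d_i}$ and $t_n$ pairwise commute and centralize $D$, which is exactly what that example requires when all defining automorphisms are $\mathbf{id}_D$.
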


\begin{proof}
    This follows directly from Example~\ref{ex:substitution_poly}.
\end{proof}

We will also need the following lemma, which is similar to \cite[(41.1),~page~44]{Nagata1975} and \cite[Lemma~13.2]{Eisenbud2013}.

\begin{lemma}\label{lem:F-finite}
Let $D$ be a division algebra and let $f \in D[t_1,\ldots,t_n]$ be a nonzero polynomial. 
Set $d=1+\deg f$ where $\deg f$ is the total degree of $f$.
Then there exists $a \in D$ such that the polynomial
$$g=a \cdot f\left(t_1+t_n^{d^{n-1}},t_2+t_n^{d^{n-2}},\ldots,t_{n-1}+t_n^{d},t_n\right)$$
is a monic polynomial in $t_n$ with coefficients in $D[t_1,\ldots,t_{n-1}]$.
\end{lemma}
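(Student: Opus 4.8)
The plan is to carry out the classical Noether substitution argument (as in \cite[Lemma~13.2]{Eisenbud2013}), checking that nothing in it is spoiled by replacing the base field with a division algebra $D$. I would write $f = \sum_{\mathbf{i}} c_{\mathbf{i}}\, t_1^{i_1}\cdots t_n^{i_n}$ with $c_{\mathbf{i}} \in D$, and invoke Lemma~\ref{lem:ring_nonlinear_subsituting} to know that the substitution $t_k \mapsto t_k + t_n^{d^{n-k}}$ for $k < n$ (with $t_n$ fixed) is a ring homomorphism, so that $f(t_1 + t_n^{d^{n-1}},\ldots,t_{n-1}+t_n^{d},t_n)$ is a well-defined element of $D[t_1,\ldots,t_n]$. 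I would then regard this element as a polynomial in $t_n$ over $D[t_1,\ldots,t_{n-1}]$ and pin down its leading coefficient.

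Since the variables are central, each factor $(t_k + t_n^{d^{n-k}})^{i_k}$ expands by the ordinary binomial formula and contributes a unique top $t_n$-power $t_n^{i_k d^{n-k}}$, with coefficient $1$ and no $t_k$. Hence the monomial $c_{\mathbf{i}}\, t_1^{i_1}\cdots t_n^{i_n}$ produces a leading $t_n$-term $c_{\mathbf{i}}\, t_n^{E(\mathbf{i})}$, where $E(\mathbf{i}) = \sum_{k=1}^{n} i_k d^{n-k}$, while all of its remaining terms have strictly smaller $t_n$-degree.

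The key step, and the one I expect to be the crux, is to show that $E$ is injective on the exponent vectors occurring in $f$, so that no cancellation can occur among the leading terms. This is exactly where the choice $d = 1 + \deg f$ enters: every $i_k$ satisfies $i_k \le \deg f < d$, so the tuple $(i_1,\ldots,i_n)$ is precisely the string of base-$d$ digits of the integer $E(\mathbf{i})$, and distinct exponent vectors therefore yield distinct values of $E$. It follows that a single monomial $\mathbf{i}^*$ attains the maximum value $N := E(\mathbf{i}^*)$; every term of the whole expansion has $t_n$-degree $\le N$, and only the leading term of $\mathbf{i}^*$ reaches degree $N$, so the coefficient of $t_n^{N}$ equals $c_{\mathbf{i}^*}$ exactly.

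Finally, because $D$ is a division algebra the nonzero element $c_{\mathbf{i}^*}$ is invertible, and taking $a = c_{\mathbf{i}^*}^{-1}$ forces the coefficient of $t_n^{N}$ to be $1$. As $N$ is the top $t_n$-degree of $g$ and the leading coefficient $1$ lies in $D \subseteq D[t_1,\ldots,t_{n-1}]$, the polynomial $g$ is monic in $t_n$ with coefficients in $D[t_1,\ldots,t_{n-1}]$, which is what is required. The only place the division-ring hypothesis is used is this last inversion; the combinatorial heart, namely the base-$d$ injectivity of $E$, is identical to the commutative case and is the one point I would verify with care.
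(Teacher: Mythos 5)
Your proof is correct and follows the same route as the paper's own argument: expand the substituted polynomial, observe that each monomial of $f$ contributes a top $t_n$-term of degree $E(\mathbf{i})=\sum_k i_k d^{n-k}$, use the base-$d$ digit uniqueness (valid since every $i_k\le\deg f<d$) to conclude a single multi-index attains the maximal $t_n$-degree, and invert its coefficient using that $D$ is a division ring. The only cosmetic difference is that you cite Lemma~\ref{lem:ring_nonlinear_subsituting} explicitly for well-definedness of the substitution, which the paper leaves implicit.
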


\begin{proof}
We write $f = \sum_{I \in \mathcal{S}} a_{I} t_1^{i_1} t_2^{i_2} \ldots t_{n}^{i_n}$, where $\mathcal{S} \subset \mathbb{N}^n$ is the support of $f$ and each $I=(i_1,\ldots,i_n) \in \mathcal{S}$ is a multi-index.
Since $t_1+t_n^{d^{n-1}},t_2+t_n^{d^{n-2}},\ldots,t_{n-1}+t_n^{d}$ are elements in $F[t_1,\ldots,t_n]$, we can apply the substitution
\begin{align*}
&f\left(t_1+t_n^{d^{n-1}},t_2+t_n^{d^{n-2}},\ldots,t_{n-1}+t_n^{d},t_n\right)\\
&	\hspace{2cm}=\sum\limits_{I \in \mathcal{S}} 
		\left( a_{I} t_n^{i_1d^{n-1}+i_2d^{n-2}+\ldots+i_{n-1}d+i_n} + \text{ terms of lower degrees in } t_n \right).
\end{align*}
We see that the coordinates of $I$ are non-negative integers between $0$ and $d-1$, and the exponent $i_1d^{n-1}+i_2d^{n-2}+\ldots+i_{n-1}d+i_n$ in the right hand side of the above expression is exactly the $d$-adic representation. Therefore, among $I$ in $\mathcal{S}$, there is only one multi-index, say $I_0$, that reaches the maximal exponent. Thus $a_{I_0}^{-1}f\left(t_1+t_n^{d^{n-1}},t_2+t_n^{d^{n-2}},\ldots,t_{n-1}+t_n^{d},t_n\right)$ is a monic polynomial in $t_n$.
\end{proof}

We can now prove Theorem \ref{main1} of the introduction:

\begin{theorem}\label{prop:normalization_central}
Let $D$ be a division algebra. The tuple $(\mathbf{id}_D,\ldots,\mathbf{id}_D)$ of identity maps is centrally normalizable over $D$.
Equivalently, every centrally finitely generated extension over $D$ is centrally normalizable over $D$.
\end{theorem}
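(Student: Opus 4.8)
The plan is to mirror the proof of Theorem~\ref{thm:normalization_one_auto}, specializing to the case $\sigma = \mathbf{id}_D$, but to exploit Lemma~\ref{lem:F-finite} in place of Lemma~\ref{lem:poly_f} so as to avoid any hypothesis that the relevant field be infinite. First I would let $S = D[z_1,\ldots,z_n]$ be an arbitrary centrally finitely generated extension of $D$, where $z_1,\ldots,z_n$ are commuting elements that centralize $D$, and argue by induction on $n$. For $n = 0$ we have $S = D$ and there is nothing to prove.

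For the inductive step, assume the claim for $n-1$. If $z_1,\ldots,z_n$ are algebraically independent over $D$, then by Lemma~\ref{lem:isom_to_skew} the ring $S$ is already isomorphic to $D[t_1,\ldots,t_n]$, so $S$ is centrally normalizable over $D$ by taking the $a_i$ to be the $z_i$ in Definition~\ref{def:auto_normalizability}. Otherwise, there is a nonzero polynomial $f \in D[t_1,\ldots,t_n]$ with $f(z_1,\ldots,z_n) = 0$. Here is the crucial substitution: I would set $d = 1 + \deg f$ and apply Lemma~\ref{lem:F-finite} to obtain $a \in D$ such that
$$g = a \cdot f\!\left(t_1+t_n^{d^{n-1}},\ldots,t_{n-1}+t_n^{d},t_n\right)$$
is monic in $t_n$ with coefficients in $D[t_1,\ldots,t_{n-1}]$. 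The point of using the nonlinear shift $t_i \mapsto t_i + t_n^{d^{n-i}}$, whose validity as a ring homomorphism is guaranteed by Lemma~\ref{lem:ring_nonlinear_subsituting}, is that it produces monicity by a purely combinatorial (base-$d$ digit) argument, with no need to find a nonvanishing point of a homogeneous form over $F$; this is exactly what lets us drop the ``infinite'' hypothesis that was required in Theorem~\ref{thm:normalization_one_auto}.

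Next I would set $\tilde{z}_i = z_i - z_n^{d^{n-i}}$ for $1 \le i \le n-1$ and $\tilde{z}_n = z_n$; these again centralize $D$ and commute. Evaluating the identity defining $g$ at $(\tilde{z}_1,\ldots,\tilde{z}_n)$ gives $g(\tilde{z}_1,\ldots,\tilde{z}_n) = a\,f(z_1,\ldots,z_n) = 0$, and since $g$ is monic of some degree $m$ in $t_n$ this reads
$$0 = \tilde{z}_n^{\,m} + \text{ terms of lower degree in } \tilde{z}_n,$$
so $\tilde{z}_n$ is left integral over $R = D[\tilde{z}_1,\ldots,\tilde{z}_{n-1}]$. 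Because $S = D[z_1,\ldots,z_n] = D[\tilde{z}_1,\ldots,\tilde{z}_n]$ and $\tilde z_n$ is integral over $R$, the extension $S$ is finite over $R$. By the induction hypothesis $R$ is centrally normalizable over $D$, so Lemma~\ref{lem:finite-over-normalizable} finishes the step. I expect the main subtlety to be bookkeeping: verifying that the shifted generators $\tilde{z}_i$ still centralize $D$ and commute, and that $D[\tilde z_1,\ldots,\tilde z_n] = D[z_1,\ldots,z_n]$, and confirming that $\tilde z_n$ being integral over $R$ genuinely yields $S$ finite over $R$ (so that the chain $D \subseteq R \subseteq S$ has $S$ finite over the normalizable subring $R$). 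These are routine once one keeps the nonlinear change of variables straight, so no infiniteness or genericity argument is needed anywhere.
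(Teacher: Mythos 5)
Your proposal is correct and is essentially the paper's own proof: the paper proves this theorem by repeating the argument of Theorem~\ref{thm:normalization_one_auto} with the nonlinear substitution $t_i \mapsto t_i + t_n^{d^{n-i}}$ and Lemma~\ref{lem:F-finite} in place of the linear shift and Lemma~\ref{lem:poly_f}, exactly as you do. Your write-up just spells out the induction and the change of generators $\tilde z_i = z_i - z_n^{d^{n-i}}$ explicitly, which the paper leaves implicit.
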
 

\begin{proof}
The proof is the same as the proof of Theorem~\ref{thm:normalization_one_auto}, except using the transformation $t_i \mapsto t_i+t_n^{r^i}$ and Lemma~\ref{lem:F-finite} instead of the transformation $t_i \mapsto t_i+b_it_n$ for $i=1,\ldots,n-1$ and Lemma~\ref{lem:poly_f} (here $r=1+\deg f$).
\end{proof}

In the case where $D$ is a centrally finite division algebra (i.e. $D$ is finite over its center $F=\mathcal{Z}(D)$), and where $S$ is a subring of a ring of central polynomials over $D$, one can further drop the ``central" condition in the hypothesis of Theorem~\ref{prop:normalization_central} (see Proposition~\ref{main} below).

\begin{lemma}\label{center}
Let $D$ be a centrally finite division algebra with center $F$, and let $R=D[t_1,\ldots,t_n]$. For any polynomial $p \in R$, there exist $p_1,\ldots,p_m \in F[t_1,\ldots,t_n]$ such that $D[p] = D[p_1,\ldots,p_m]$ inside $R$.
\end{lemma}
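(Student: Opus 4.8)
The plan is to produce the central polynomials $p_i$ explicitly as the ``central components'' of $p$ relative to an $F$-basis of $D$, and then to recover each of them inside $D[p]$ using the structure of $D$ as a central simple $F$-algebra. The decomposition gives one inclusion for free; the content is showing that these components lie back in $D[p]$.

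First I would fix an $F$-basis $c_1,\ldots,c_r$ of $D$, where $r=\dim_F D<\infty$ since $D$ is centrally finite. Because $t_1,\ldots,t_n$ are central variables, $R=D[t_1,\ldots,t_n]$ is free as a module over its center $Z:=F[t_1,\ldots,t_n]$ with basis $c_1,\ldots,c_r$; hence $p$ admits a unique expression $p=\sum_{k=1}^r c_k g_k$ with $g_k\in Z$. These $g_k$ are my candidates for $p_1,\ldots,p_m$, with $m=r$. One inclusion is then immediate: since each $g_k$ lies in $F[t_1,\ldots,t_n]$ and is central in $R$, the ring $D[g_1,\ldots,g_r]$ contains both $D$ and $p=\sum_k c_k g_k$, hence contains $D[p]$.

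The substance of the argument is the reverse inclusion, namely that each $g_k$ lies in $D[p]$. Here I would invoke the standard fact that a finite-dimensional division algebra over its center is central simple, so that the natural map $D\otimes_F D^{\mathrm{op}}\to \mathrm{End}_F(D)$ sending $a\otimes b$ to the endomorphism $x\mapsto axb$ is an isomorphism; in particular every $F$-linear endomorphism of $D$ has the form $x\mapsto \sum_l a_l x b_l$ for suitable $a_l,b_l\in D$. Fixing an index $k_0$, let $\lambda_{k_0}\in\mathrm{Hom}_F(D,F)$ be the dual-basis functional determined by $\lambda_{k_0}(c_k)=\delta_{k k_0}$, and realize the rank-one endomorphism $x\mapsto \lambda_{k_0}(x)\cdot 1$ as $x\mapsto\sum_l a_l x b_l$. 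Computing with these multipliers and using that each $g_k$ is central (so it commutes past the $a_l,b_l$), one gets $\sum_l a_l\, p\, b_l=\sum_k\big(\sum_l a_l c_k b_l\big)g_k=\sum_k \lambda_{k_0}(c_k)\,g_k=g_{k_0}$. Since $\sum_l a_l\, p\, b_l$ is a two-sided $D$-combination of $p$, it belongs to $D[p]$, whence $g_{k_0}\in D[p]$. Running over all $k_0$ yields $D[g_1,\ldots,g_r]\subseteq D[p]$, and combined with the previous paragraph this gives $D[p]=D[g_1,\ldots,g_r]$ with $g_i\in F[t_1,\ldots,t_n]$, as required.

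I expect the one genuinely nontrivial input to be this recovery of the central components: the identity $D\otimes_F D^{\mathrm{op}}\cong\mathrm{End}_F(D)$ is exactly what guarantees enough two-sided multipliers inside $D$ to act as an arbitrary $F$-linear functional on the coefficients $c_k$ while leaving the central polynomial factors $g_k$ untouched. Everything else is bookkeeping with the free $Z$-module structure of $R$ and the centrality of the $g_k$.
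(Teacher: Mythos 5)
Your proof is correct and is essentially the paper's own argument: both decompose $p$ over an $F$-basis of $D$ into components lying in $F[t_1,\ldots,t_n]$ (which gives the inclusion $D[p]\subseteq D[p_1,\ldots,p_m]$ for free) and then recover each component inside $D[p]$ by writing the corresponding coordinate functional as a two-sided multiplication $x \mapsto \sum_l a_l x b_l$, using centrality of the components to slide them past the right multipliers. The only divergence is bibliographic rather than mathematical: where you invoke the classical isomorphism $D \otimes_F D^{\mathrm{op}} \cong \mathrm{End}_F(D)$ for a finite-dimensional central simple algebra, the paper cites an equivalent statement of Alon--Paran (originally due to Wilczy\'nski) asserting that the coordinate maps $d \mapsto d_i$ have exactly this two-sided multiplication form.
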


\begin{proof} 
Write $p = \sum_{I} p_I t^I$, where $t = (t_1,\ldots,t_n)$ and $I$ is a multi-index. 
Let $v_1,\ldots,v_m$ be a basis for $D$ over $F$ and write $p_I = \sum_{i=1}^m p_{I,i}v_i$ with $\{p_{I,i}\} \subseteq F$. 
Let $p_i = \sum p_{I,i} x^I \in F[t_1,\ldots,t_n]$, $1 \leq i \leq m$. 
Since the variables $t_1,\ldots,t_n$ are central in $R$, we have 
$$p = \sum_I \big(\sum_i p_{I,i} v_i\big ) t^I 
	=  \sum_{i} \big(\sum_I p_{I,i}t^I\big) v_i 
		= \sum_i p_i v_i.$$
Hence $p \in D[p_1,\ldots,p_m]$.

By \cite[Corollary~4]{Alon2021quaternionic} (a result previously discovered by Wilczy{\' n}ski \cite[Theorem 4.1]{Wil2014}), there exist elements $b_{st}^i \in F$, $1 \leq i,s,t,\leq m$, such that 
$$p_{I,i} = \sum_{s,t = 1}^mb_{st}^i v_s p_I v_t.$$
We thus have 
$$p_i = \sum_I p_{I,i} t^I 
	= \sum_I\sum_{s,t}b_{st}^i v_s p_I t^I v_t 
	= \sum_{s,t}b_{s,t}^i v_s(\sum_I p_I t^I) v_t 
	= \sum_{s,t} b_{s,t}^i v_s p v_t.$$
Hence $p_i \in D[p]$ for $1 \leq i \leq m$. 
\end{proof}



\begin{corollary}\label{cor_center} 
Let $D$ be a centrally finite division algebra, let $R=D[t_1,\ldots,t_n]$, let $A$ be a finite set of polynomials in $R$, and let $S = D[A]$. Then $S$ is centrally finitely generated over $D$. 
\end{corollary}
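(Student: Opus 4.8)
The plan is to reduce to the single-generator situation already handled by Lemma~\ref{center} and then to bundle all the resulting central polynomials into one finite generating set. Write $A = \{q_1,\ldots,q_r\}$ and set $F = \mathcal{Z}(D)$. For each index $j$, applying Lemma~\ref{center} to $q_j$ produces polynomials $p_{j,1},\ldots,p_{j,m_j} \in F[t_1,\ldots,t_n]$ such that $D[q_j] = D[p_{j,1},\ldots,p_{j,m_j}]$ inside $R$. The whole family $\{p_{j,k}\}_{j,k}$ is finite, and every member lies in the central polynomial ring $F[t_1,\ldots,t_n]$.

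The next step is to verify that $S$ equals the subring $S' := D[\{p_{j,k}\}]$ generated over $D$ by this entire family. For the inclusion $S \subseteq S'$, observe that each $q_j$ lies in $D[q_j] = D[p_{j,1},\ldots,p_{j,m_j}] \subseteq S'$, so $A \subseteq S'$ and hence $S = D[A] \subseteq S'$. For the reverse inclusion, note that each generator $p_{j,k}$ belongs to the right-hand side $D[p_{j,1},\ldots,p_{j,m_j}] = D[q_j]$, which is contained in $D[A] = S$; therefore $S' \subseteq S$. Combining the two gives $S = S'$.

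Finally I would read off the conclusion from the structure of the new generators. Each $p_{j,k}$ is a central polynomial, so it centralizes $D$ (its coefficients lie in $\mathcal{Z}(D)$ and the variables $t_1,\ldots,t_n$ are central in $R$), and any two such polynomials commute, since they both lie in the commutative ring $F[t_1,\ldots,t_n]$. Thus $S = D[\{p_{j,k}\}]$ is generated over $D$ by a finite family of pairwise commuting elements that centralize $D$, and Proposition~\ref{equivalent} immediately yields that $S$ is centrally finitely generated over $D$.

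The only substantive input is Lemma~\ref{center}; once it is available, the argument is formal bookkeeping with subring generation. The point to be careful about is that Lemma~\ref{center} supplies an honest equality $D[q_j] = D[p_{j,1},\ldots,p_{j,m_j}]$ rather than a one-sided containment, since it is precisely this equality that simultaneously places each $q_j$ inside $S'$ and recovers each $p_{j,k}$ inside $D[q_j] \subseteq S$, giving both inclusions needed for $S = S'$.
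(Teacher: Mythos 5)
Your proof is correct and follows essentially the same route as the paper's: apply Lemma~\ref{center} to each generator, collect the resulting central polynomials, and observe that the equality $D[q_j] = D[p_{j,1},\ldots,p_{j,m_j}]$ yields $S = D[\{p_{j,k}\}]$. You merely spell out the double inclusion and the appeal to Proposition~\ref{equivalent} that the paper leaves implicit.
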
 
\begin{proof} 
Write $A = \{p_1,\ldots,p_k\}$. By Proposition~\ref{center}, there exist polynomials $p_{ij} \in F[t_1,\ldots,t_n]$, such that $D[p_i] = D[p_{i1},\ldots,p_{im_i}]$. It follows that $S = D[p_{11},\ldots,p_{1m_1},\ldots,p_{k1},\ldots,p_{km_k}]$.
\end{proof} 

From Corollary \ref{cor_center} we can deduce Noether's normalization for finitely generated subrings of rings of polynomials in central variables over a centrally finite division algebra $D$ (Theorem \ref{main_centrally_finite} of the introduction). The key point in the proof is that such rings must be {\bf centrally} finitely generated over $D$ (which follows from Lemma~\ref{center}).

\begin{proposition}\label{main} 
Let $D$ be a centrally finite division algebra and let $R=D[t_1,\ldots,t_n]$.
Then for every finite subset $A$ of $R$, the ring $S=D[A]$ is centrally normalizable over $D$. 
\end{proposition}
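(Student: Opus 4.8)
The plan is to deduce this as an immediate combination of the two results established immediately above, so that essentially no new work is required in this proof itself. First I would observe that, although $A$ consists of polynomials in $R=D[t_1,\ldots,t_n]$ whose coefficients need not lie in the center of $D$, the subring $S=D[A]$ is nevertheless \emph{centrally} finitely generated over $D$. This is precisely the content of Corollary~\ref{cor_center}, and it is the only substantive step: writing $A=\{p_1,\ldots,p_k\}$, Lemma~\ref{center} produces, for each $p_i$, finitely many central polynomials $p_{i1},\ldots,p_{im_i}\in F[t_1,\ldots,t_n]$ (with $F=\mathcal{Z}(D)$) satisfying $D[p_i]=D[p_{i1},\ldots,p_{im_i}]$. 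Concatenating these finite lists exhibits $S$ as generated over $D$ by finitely many polynomials that centralize $D$, i.e. as a centrally finitely generated extension.

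Having established that $S$ is centrally finitely generated over $D$, I would then invoke Theorem~\ref{prop:normalization_central}, which asserts that every centrally finitely generated extension of $D$ is centrally normalizable over $D$. Applying it to $S$ yields commuting elements $a_1,\ldots,a_d\in S$ that centralize $D$, are algebraically independent over $D$, and over whose generated subring $D[a_1,\ldots,a_d]$ the ring $S$ is finite as a left module. By Definition~\ref{def:auto_normalizability} this is exactly the assertion that $S$ is centrally normalizable over $D$, which completes the proof.

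I expect the genuine obstacle to be entirely absorbed into the preceding lemmas rather than to surface in this deduction. Concretely, the nontrivial point---that a subring generated by polynomials with arbitrary (possibly non-central) coefficients can be regenerated over $D$ by \emph{central} polynomials---rests on the theorem of Wilczy{\' n}ski cited in Lemma~\ref{center}, and this is the unique place where the hypothesis that $D$ be centrally finite is genuinely used. Once Corollary~\ref{cor_center} is in hand, the remaining reduction to Theorem~\ref{prop:normalization_central} is a formality, so the only care needed is to confirm that finiteness of $D$ over its center $F$ is indeed available and that the generating set produced stays finite after concatenating the lists $\{p_{i1},\ldots,p_{im_i}\}$ over $1\le i\le k$.
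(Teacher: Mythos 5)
Your proof is correct and is essentially identical to the paper's: the paper also deduces the proposition by citing Corollary~\ref{cor_center} (that $S=D[A]$ is centrally finitely generated, via Lemma~\ref{center} and Wilczy\'nski's theorem) and then applying Theorem~\ref{prop:normalization_central}. Your identification of the centrally finite hypothesis as entering only through Lemma~\ref{center} matches the paper's own remarks.
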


\begin{proof} 
By Corollary~\ref{cor_center}, $S$ is centrally finitely generated over $D$, hence the claim follows from Theorem~\ref{prop:normalization_central}. 
\end{proof}

A simple application of Theorem~\ref{prop:normalization_central} is the following Nullstellensatz for two-sided ideals in $\mathbb{H}[t_1,\ldots,t_n]$, where $\mathbb{H}$ is the real quaternion algebra. 
The proof is similar to the proof via Zariski's lemma of the classical Nullstellensatz for algebraically closed fields.

\begin{proposition} 
Let $R = \mathbb{H}[t_1,\ldots,t_n]$, the ring of polynomials in $n$ central variables over $\mathbb{H}$. Let $M$ be a two-sided ideal in $R$, which is maximal as a left ideal. Then there exists $a_1,\ldots,a_n \in \mathbb{R}$ such that $t_1-a_1,\ldots,t_n-a_n$ generate $M$. 
\end{proposition}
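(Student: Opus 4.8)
The plan is to mimic the classical proof of the weak Nullstellensatz via Zariski's lemma, now available in the quaternionic setting through Proposition~\ref{ZariMain} (Zariski's lemma for automorphically normalizable extensions) together with the central normalizability of $(\mathbf{id}_{\mathbb{H}},\ldots,\mathbf{id}_{\mathbb{H}})$ proved in Theorem~\ref{prop:normalization_central}. First I would pass to the quotient $S = R/M$. Since $M$ is a two-sided ideal that is maximal as a left ideal, $S$ is a simple left $R$-module and, being a quotient ring, it is a division ring: every nonzero element $a+M$ generates a nonzero left submodule, which must be all of $S$, so $a+M$ has a left inverse, and a standard argument upgrades this to a two-sided inverse. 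Thus $S$ is a centrally finitely generated extension of $\mathbb{H}$ which happens to be a division algebra.

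Next I would invoke Theorem~\ref{prop:normalization_central} to conclude that $S$ is centrally normalizable over $\mathbb{H}$, and then Proposition~\ref{ZariMain} (Zariski's lemma) to conclude that $S$ is finite as a left module over $\mathbb{H}$. The point is that $\mathbb{H}$ is centrally finite (four-dimensional over $\mathbb{R}$), so $S$ is therefore finite-dimensional over $\mathbb{R} = \mathcal{Z}(\mathbb{H})\cap\mathbb{R}$. The crucial structural input is that $\mathbb{H}$ admits no proper finite-dimensional division algebra extension other than itself: by the Frobenius theorem the only finite-dimensional real division algebras are $\mathbb{R}$, $\mathbb{C}$, and $\mathbb{H}$, and since $S$ contains $\mathbb{H}$ we must have $S \cong \mathbb{H}$. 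Hence the inclusion $\mathbb{H}\hookrightarrow S$ is an isomorphism of $\mathbb{H}$-algebras.

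Having identified $S = R/M$ with $\mathbb{H}$, I would extract the coordinates of the corresponding point. The composite ring homomorphism $R \to S \cong \mathbb{H}$ restricts to the identity on $\mathbb{H}$, so it is an $\mathbb{H}$-algebra evaluation; let $a_i \in \mathbb{H}$ denote the image of $t_i$. Because each $t_i$ is central in $R$, its image must lie in the center $\mathcal{Z}(\mathbb{H}) = \mathbb{R}$, so in fact $a_i \in \mathbb{R}$ for every $i$. Consequently $t_i - a_i \in M$ for all $i$, and I would finish by showing these elements generate $M$. Set $M_0 = (t_1-a_1,\ldots,t_n-a_n)$, the two-sided ideal they generate; a translation argument (replacing $t_i$ by $t_i + a_i$) reduces to the case $a_i = 0$, where $R/M_0 \cong \mathbb{H}$ via $t_i \mapsto 0$, so $M_0$ is already maximal as a left ideal. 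Since $M_0 \subseteq M$ and $M_0$ is maximal, $M_0 = M$.

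The main obstacle will be the reduction to the Frobenius theorem and the verification that the evaluation map sends central variables into the center of $\mathbb{H}$; the algebra classification does the real work, while Zariski's lemma is precisely what guarantees the finiteness hypothesis needed to apply it. The remaining steps — that $S$ is a division ring and that the $a_i$ are real — are routine once the finiteness is in hand, and the final equality $M_0 = M$ follows purely from maximality.
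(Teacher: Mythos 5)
Your proposal is correct, and its core is the same as the paper's: pass to $S=R/M$, observe it is a centrally finitely generated division-algebra extension of $\mathbb{H}$, apply Theorem~\ref{prop:normalization_central} together with Proposition~\ref{Zari} (Zariski's lemma) to get finiteness over $\mathbb{H}$, and invoke Frobenius to conclude $R/M=\mathbb{H}$. Where you genuinely diverge is the endgame. The paper first extracts arbitrary quaternionic coordinates $a_i\in\mathbb{H}$ with $t_i-a_i\in M$, then cites \cite[Lemma 2.1]{Alon2021central} and \cite[Proposition 2.2]{Alon2021central} to get that the $a_i$ commute pairwise and that $t_1-a_1,\ldots,t_n-a_n$ generate $M$, and only afterwards uses the two-sidedness of $M$ in an explicit computation ($(t_1-a_1)b\in M$ forces $b^{-1}a_1b=a_1$) to conclude $a_i\in\mathbb{R}$. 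You instead exploit two-sidedness at the outset: since the quotient map $R\to R/M=\mathbb{H}$ is a surjective ring homomorphism, it carries the central elements $t_i$ to central elements of $\mathbb{H}$, so $a_i\in\mathcal{Z}(\mathbb{H})=\mathbb{R}$ immediately; with real coordinates in hand, generation becomes elementary, since $R/(t_1-a_1,\ldots,t_n-a_n)\cong\mathbb{H}$ via evaluation (or your translation to the origin), so $(t_1-a_1,\ldots,t_n-a_n)$ is already maximal as a left ideal and must equal $M$. Your route is more self-contained -- it avoids the two external citations entirely -- while the paper's route runs through results of Alon--Paran that hold in the greater generality of one-sided maximal ideals (where the coordinates need not be real nor central), which is why the paper can remark afterwards that \cite[Proposition 2.6]{Alon2021central} characterizes all maximal left ideals. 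Two small points you should make explicit in a final write-up: $\mathbb{R}$ is central in $S$ (so that Frobenius, a statement about $\mathbb{R}$-algebras, applies), which follows since $S$ is generated by $\mathbb{H}$ and the central images $t_i+M$; and $\mathbb{H}\cap M=\{0\}$ (automatic since $M$ is proper and nonzero quaternions are units), so that $S$ is indeed a centrally finitely generated extension in the sense of Definition~\ref{def:f.g.}.
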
 

\begin{proof} 
The assumption that $M$ is maximal as a left ideal means that $R/M$ is a division algebra, which contains $\mathbb{H}$ as a subring. 
Moreover, $R/M$ is centrally finitely generated over $\mathbb{H}$ by $t_1+M,\ldots,t_n+M$. 
It follows from Proposition~\ref{prop:normalization_central} that $R/M$ is centrally normalizable over $\mathbb{H}$.
Thus, by Proposition \ref{Zari}, $R/M$ is a finite extension of $\mathbb{H}$. 
This implies that $R/M = \mathbb{H}$, by a theorem of Frobenius \cite{frobenius1878uber}, and hence there exist $a_1,\ldots,a_n \in \mathbb{H}$ such that $t_1-a_1,\ldots,t_n-a_n \in M$. By \cite[Lemma 2.1]{Alon2021central} and \cite[Proposition 2.2]{Alon2021central} (the proof of both of these claims is short and elementary), this means that $a_ia_j = a_ja_i$ for all $1 \leq i,j \leq n$ and that $M$ is generated by $t_1-a_1,\ldots,t_n-a_n$. Finally, since $M$ is two-sided ideal, we must have $a_1,\ldots,a_n \in \mathbb{R}$. Indeed, if say $a_1 \notin \mathbb{R}$, then there exist $0 \neq b \in \mathbb{H}$ such that $a_1b \neq ba_1$. Then $(t_1-a_1)b = bt_1-a_1b = b(t_1-b^{-1}a_1b) \in M$, hence $(t_1-b^{-1}a_1b) \in M$. But this means that $t_1-b^{-1}a_1b$ must vanish at $a_1$, that is, $a_1 = b^{-1}a_1b$, hence $ba_1 = a_1b$. \end{proof}

We note that \cite[Proposition 2.6]{Alon2021central} goes further and characterizes all maximal left ideals in $\mathbb{H}[t_1,\ldots,t_n]$, not only the two-sided ones.

\section{Normalizability of general tuples}\label{sec:general_tuples}

In the previous sections, we have shown that constant tuples of automorphisms that are automorphically normalizable over a division algebra. In this section, we consider arbitrary tuples (of finite length) and give some negative examples. In the case where our base ring $D$ is a field, we give a necessary and sufficient condition for such a tuple to be automorphically normalizable.

\subsection{Basic properties}

\begin{proposition}\label{prop:sigma_reduce_exponents}
If a tuple $(\sigma_1,\ldots,\sigma_n)$ of commuting automorphisms in $\mathbf{Aut}(D)$ is not automorphically normalizable over $D$, then the tuple $(\sigma_1^{d_1},\ldots,\sigma_n^{d_n})$ is also not automorphically normalizable over $D$, for any sequence of positive integers $d_1,\ldots,d_n$.
\end{proposition}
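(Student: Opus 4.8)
The plan is to prove the contrapositive: assuming that $(\sigma_1^{d_1},\ldots,\sigma_n^{d_n})$ is automorphically normalizable over $D$, I will show that $(\sigma_1,\ldots,\sigma_n)$ is automorphically normalizable over $D$ as well. By Definition~\ref{def:sigma-normalizability}, this amounts to taking an arbitrary automorphically finitely generated extension $S = D[z_1,\ldots,z_n;\sigma_1,\ldots,\sigma_n]$ and proving that $S$ is automorphically normalizable over $D$. The whole argument closely mirrors the proof of Corollary~\ref{cor:normalization_cyclic_positive}, the only difference being that the powers $\sigma_i^{d_i}$ need not coincide, so we land on the general tuple $(\sigma_1^{d_1},\ldots,\sigma_n^{d_n})$ rather than on a constant one.

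The key construction is to set $w_i = z_i^{d_i}$ for each $i$. First I would verify, exactly as in Corollary~\ref{cor:normalization_cyclic_positive}, that each $w_i$ is automorphic over $D$ with respect to $\sigma_i^{d_i}$: for every $b \in D$ one has $w_i b = z_i^{d_i} b = \sigma_i^{d_i}(b) z_i^{d_i} = \sigma_i^{d_i}(b) w_i$. Since the $z_i$ commute, so do the $w_i$, and since the $\sigma_i$ commute, so do the $\sigma_i^{d_i}$. Hence, by Proposition~\ref{equivalent}, the subring $R = D[w_1,\ldots,w_n;\sigma_1^{d_1},\ldots,\sigma_n^{d_n}]$ of $S$ is an automorphically finitely generated extension of $D$ of exactly the form governed by the hypothesis, so $R$ is automorphically normalizable over $D$.

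Next I would show that $S$ is finite as a left module over $R$. Writing each exponent as $k_i = m_i d_i + e_i$ with $0 \le e_i < d_i$, and using that the $z_i$ (hence the $w_j$) commute pairwise, every monomial factors as $z_1^{k_1}\cdots z_n^{k_n} = (w_1^{m_1}\cdots w_n^{m_n})(z_1^{e_1}\cdots z_n^{e_n})$, a left $R$-multiple of one of the finitely many monomials $z_1^{e_1}\cdots z_n^{e_n}$ with $0\le e_i<d_i$. Thus $S$ is spanned over $R$ by these $\prod_{i=1}^n d_i$ elements. Finally, since $D \subseteq R \subseteq S$, since $R$ is automorphically normalizable over $D$, and since $S$ is finite over $R$, Lemma~\ref{lem:finite-over-normalizable} yields that $S$ is automorphically normalizable over $D$, completing the contrapositive.

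I expect no serious obstacle here; the argument is essentially a repackaging of Corollary~\ref{cor:normalization_cyclic_positive} in contrapositive form. The one point requiring a little care is the left-module finiteness in the noncommutative setting: one must invoke the pairwise commutativity of $z_1,\ldots,z_n$ to pull the factor $w_1^{m_1}\cdots w_n^{m_n}$ to the left of the residual monomial, so that it becomes a genuine \emph{left} $R$-coefficient, rather than taking that reduction for granted.
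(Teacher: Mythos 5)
Your proof is correct and takes essentially the same approach as the paper: the paper argues the statement directly (taking a non-normalizable witness $S=D[z_1,\ldots,z_n;\sigma_1,\ldots,\sigma_n]$ and applying Lemma~\ref{lem:finite-over-normalizable} in contrapositive form to conclude the subring $R=D[z_1^{d_1},\ldots,z_n^{d_n};\sigma_1^{d_1},\ldots,\sigma_n^{d_n}]$ is not normalizable), while you phrase the same construction $w_i=z_i^{d_i}$ and the same appeal to that lemma in contrapositive form overall. Your explicit verification of left-module finiteness via the monomials $z_1^{e_1}\cdots z_n^{e_n}$ with $0\le e_i<d_i$ is a detail the paper merely asserts.
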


\begin{proof}
Assume that the tuple $(\sigma_1,\ldots,\sigma_n)$ is not automorphically normalizable over $D$.
Then there exists an automorphic extension $S=D[z_1,\ldots,z_n;\sigma_1,\ldots,\sigma_n]$ of $D$ that is not automorphically normalizable over $D$.
Observe that $R=D[z_1^{d_1},\ldots,z_n^{d_n};\sigma_1^{d_1},\ldots,\sigma_n^{d_n}]$ is an automorphic extension of $D$ and $S$ is finite over $R$ (with the left linear basis being the set of monomials in $z_1,\ldots,z_n$ of degrees at most $d_1,\ldots,d_n$, respectively).
Therefore, according to Lemma~\ref{lem:finite-over-normalizable}, $R$ is not automorphically normalizable over $D$. As a consequence, the tuple $(\sigma_1^{d_1},\ldots,\sigma_n^{d_n})$ is not automorphically normalizable over $D$.
\end{proof}

\begin{proposition}\label{prop:sigma-prolong}
Let $\sigma_1,\ldots,\sigma_n$ be commuting automorphisms in $\mathbf{Aut}(D)$.
If the tuple $(\sigma_1,\ldots,\sigma_r)$ is not automorphically normalizable over $D$ for some integer $r \leq n$, then so is the tuple $\Sigma=(\sigma_1,\ldots,\sigma_n)$.
\end{proposition}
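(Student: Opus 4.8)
The plan is to turn a witness for the non-normalizability of the prefix $(\sigma_1,\ldots,\sigma_r)$ into a witness for the full tuple $\Sigma=(\sigma_1,\ldots,\sigma_n)$, by padding with zeros. By Definition~\ref{def:sigma-normalizability}, the hypothesis that $(\sigma_1,\ldots,\sigma_r)$ is not automorphically normalizable over $D$ supplies an automorphic extension $S = D[z_1,\ldots,z_r;\sigma_1,\ldots,\sigma_r]$ of $D$ that is not automorphically normalizable over $D$. My goal is to exhibit this same ring $S$ as an automorphic extension attached to the longer tuple, i.e. of the form $D[w_1,\ldots,w_n;\sigma_1,\ldots,\sigma_n]$; once this is done, Definition~\ref{def:sigma-normalizability} immediately yields that $\Sigma$ is not automorphically normalizable.

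The construction is to set $w_i = z_i$ for $1 \le i \le r$ and $w_i = 0$ for $r < i \le n$. First I would verify that each $w_i$ is automorphic over $D$ with respect to $\sigma_i$: for $i \le r$ this is built into the extension $S$, while for $i > r$ it holds because $0 \cdot b = 0 = b^{\sigma_i}\cdot 0$ for every $b \in D$, so the zero element is automorphic with respect to \emph{every} automorphism, in particular $\sigma_i$. Next, the elements $w_1,\ldots,w_n$ commute pairwise, since $z_1,\ldots,z_r$ commute and $0$ commutes with everything, and the automorphisms $\sigma_1,\ldots,\sigma_n$ commute by hypothesis. Finally, because $w_i = 0$ for $i > r$, the subring of $S$ generated by $D$ together with the $w_i$ is $D[w_1,\ldots,w_n] = D[z_1,\ldots,z_r] = S$. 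Hence $S = D[w_1,\ldots,w_n;\sigma_1,\ldots,\sigma_n]$ in the sense of Notation~\ref{not:extensions}.

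Thus $S$ is an automorphic extension of $D$ associated to the tuple $\Sigma$: writing $\phi$ for the substitution map of Lemma~\ref{isom} at the point $(w_1,\ldots,w_n)$, the isomorphism $D[t_1,\ldots,t_n;\sigma_1,\ldots,\sigma_n]/\ker(\phi) \cong S$ realizes $S$ as a quotient by a proper two-sided ideal, the ideal being proper because $\phi$ fixes $D$ pointwise. Since $S$ is not automorphically normalizable over $D$, the tuple $\Sigma$ is not automorphically normalizable over $D$, which is the assertion. The only step that requires any care is confirming that the zero element is a legitimate automorphic generator for each of the extra automorphisms $\sigma_{r+1},\ldots,\sigma_n$; this is precisely where one uses that the definition of ``automorphic'' demands only the \emph{existence} of a twisting automorphism, not its uniqueness, since $\mathbf{Ann}_D(0) = D$ and the twisting automorphism attached to $0$ is highly non-unique.
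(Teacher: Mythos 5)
Your proposal is correct and follows essentially the same route as the paper's own proof, which likewise pads the witness extension $S=D[z_1,\ldots,z_r;\sigma_1,\ldots,\sigma_r]$ by setting $z_{r+1}=\ldots=z_n=0$ and observes that $S=D[z_1,\ldots,z_n;\sigma_1,\ldots,\sigma_n]$ remains an automorphic extension that is not automorphically normalizable. Your explicit verification that $0$ is automorphic with respect to every automorphism simply spells out a detail the paper leaves implicit.
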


\begin{proof}
Let $S=D[z_1,\ldots,z_r;\sigma_1,\ldots,\sigma_r]$ be an automorphic extension of $D$ which is not automorphically normalizable over $D$.
Set $z_{r+1}=\ldots=z_n=0$. Then the ring $D[z_1,\ldots,z_n;\sigma_1,\ldots,\sigma_n]=S$ is of course an automorphic extension of $D$ and it is not automorphically normalizable over $D$.
Therefore, $\Sigma$ is not automorphically normalizable over $D$.
\end{proof}

By Propositions~\ref{prop:sigma_reduce_exponents} and~\ref{prop:sigma-prolong}, if we have identified a tuple of commuting automorphisms of $D$ that is not automorphically normalizable over $D$, we can generate infinitely many additional tuples that are not automorphically normalizable over $D$ by prolonging the tuple, or by introducing positive powers to each automorphism in the tuple. However, we have not seen any negative  examples to normalizability thus far. In the following, we provide two such examples, in Propositions~\ref{prop:counter-example-1} and~\ref{prop:counter-example2}.

\subsection{First negative example - the tuple $(\sigma,\sigma^{-1})$}

Let $D$ be a division algebra. Recall that for a nonzero element $c \in D$, the notation $\mathbf{in}_c$ stands for the inner automorphism of $D$ with respect to $c$ (Example~\ref{ex:inner_automorphic}).

\begin{definition}[{see \cite[p.~43]{Lam1994}}]\label{def:quasi-finite-auto}
An automorphism $\sigma \in \mathbf{Aut}(D)$ is called of \textbf{finite inner order} if $\sigma^k$ is an inner automorphism for some positive integer $k$ (i.e. $\sigma^k = \mathbf{in}_c$ for some nonzero element $c \in D$).
Otherwise, we say that $\sigma$ is of \textbf{infinite inner order}.
\end{definition}

If $\sigma \in \mathbf{Aut}(D)$ has finite order $n$, then $\sigma^n = \textbf{id}_D = \textbf{in}_{1}$. Hence, every finite order automorphism has finite inner order but the converse does not hold in general.
In case, $\sigma$ is of infinite inner order, then $\mathbf{in}_{a} \circ \sigma^i \neq \mathbf{in}_{b} \circ  \sigma^j$ for all nonzero elements $a$ and $b$ of $D$.

\begin{example}
Let $D=\mathbb{H}$ be the real quaternion algebra.
Let $\alpha$ be an irrational number and set $u=e^{\mathbf{i} \alpha}$, where $\mathbf{i}$ is the imaginary unit of $\mathbb{C}$. 
Then for any integer $k$, the number $u^k = e^{\mathbf{i} k \alpha}$ is not a real number.
Consider the inner automorphism $\sigma = \mathbf{in}_u$ of $\mathbb{H}$, i.e. $\sigma(a) = uau^{-1}$ for any $a \in \mathbb{H}$.
Of course, $\sigma$ is of finite inner order (taking $n=1$ in the above definition). 
However, $\sigma$ is not of finite order. 
Indeed, if there was an integer $n$ such that $\sigma^n = \mathbf{in}_{u^n}$ is the identity, then $u^n$ would belong to the center $\mathcal{Z}(\mathbb{H})=\mathbb{R}$, a contradiction.
\end{example} 

\begin{lemma}\label{lem:linearly_separable}
Let $S/D$ be a ring extension.
Assume that elements $f_1,\ldots,f_d \in S$ are automorphic and left linearly independent over $D$.
Then the sum $f=f_1+\ldots+f_d$ is automorphic over $D$ if and only if $f,f_1,\ldots,f_d$ are automorphic over $D$ with respect to the same automorphism.
\end{lemma}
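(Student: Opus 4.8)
The plan is to prove the biconditional by treating its two directions separately, with essentially all the content residing in the forward (``only if'') direction. For the backward direction, suppose $f, f_1, \ldots, f_d$ are all automorphic over $D$ with respect to a single common automorphism $\sigma$. Then in particular $f$ is automorphic over $D$, so this direction is immediate and requires no work.

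For the forward direction, I would assume $f$ is automorphic over $D$ with respect to some $\sigma \in \mathbf{Aut}(D)$, so that $fb = b^\sigma f$ for every $b \in D$. By hypothesis each $f_i$ is automorphic over $D$, so for each $i$ I record a choice of automorphism $\tau_i \in \mathbf{Aut}(D)$ with $f_i b = b^{\tau_i} f_i$ for all $b \in D$. The key move is to compute the product $fb$ in two different ways. Expanding $f = f_1 + \cdots + f_d$ and pushing $b$ across each summand gives
$$fb = \sum_{i=1}^d f_i b = \sum_{i=1}^d b^{\tau_i} f_i,$$
whereas the automorphy of $f$ gives $fb = b^\sigma f = \sum_{i=1}^d b^\sigma f_i$. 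Subtracting these two expressions yields the relation $\sum_{i=1}^d (b^\sigma - b^{\tau_i}) f_i = 0$, valid for every $b \in D$.

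At this point I would invoke the hypothesis that $f_1, \ldots, f_d$ are left linearly independent over $D$. Since each coefficient $b^\sigma - b^{\tau_i}$ lies in $D$, linear independence forces $b^\sigma = b^{\tau_i}$ for all $i$ and all $b \in D$; that is, $\tau_i = \sigma$ for every $i$. Consequently each $f_i$, and hence $f$ itself, is automorphic over $D$ with respect to the single automorphism $\sigma$, which is exactly the desired conclusion.

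I do not anticipate a genuine obstacle here; the argument is short and the only point demanding care is that the $\tau_i$ must not be assumed equal a priori (an element can in principle be automorphic with respect to several automorphisms, as noted in the earlier remark). One therefore fixes an arbitrary valid $\tau_i$ for each $f_i$ and lets the linear-independence relation identify all of them with $\sigma$. As a side remark, since $D$ is a division algebra and each $f_i$ is nonzero, we have $\mathbf{Ann}_D(f_i) = \{0\}$, so in fact each $\tau_i$ is already uniquely determined by $f_i$; but the proof does not rely on this.
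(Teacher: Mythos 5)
Your proof is correct and takes essentially the same approach as the paper's: both compute $fb$ in two ways and use left linear independence of the $f_i$ to force each $\tau_i$ to equal $\sigma$. One nitpick on your explicitly non-load-bearing side remark: $\mathbf{Ann}_D(f_i)=\{0\}$ follows from the linear independence hypothesis itself (a relation $rf_i=0$ with $r\neq 0$ is a nontrivial dependence), not from $D$ being a division algebra, since $S$ may have zero divisors.
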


\begin{proof}
Assume that $f_i$ is automorphic over $D$ with respect to $\pi_i \in \mathbf{Aut}(D)$ for each $i=1,\ldots,d$. 
The ``if" part is trivial.

For the ``only if" part, let us suppose that $f$ is automorphic over $D$ with respect to $\pi \in \mathbf{Aut}(D)$.
Then, for every $b \in D$, we have
\begin{align*}
0=fb-\pi(b)f &= (f_1+\ldots+f_d) b-\pi(b)(f_1+\ldots+f_d) \\
	&= (\pi_1(b)-\pi(b)) f_1 + \ldots + (\pi_n(b)-\pi(b)) f_n.
\end{align*}
Since $f_i$s are left linearly independent over $D$, we must have $\pi_1(b)=\pi(b)$ for all $i$.
Thus, $\pi_i=\pi$ for all $i$.
In other words, $f,f_1,\ldots,f_d$ are automorphic over $D$ with respect to the same automorphism $\pi$.
\end{proof}

We will now give the first negative example for the normalizability of an automorphic extension of a division algebra.
This example is chosen to be a subring of a division algebra $D(t;\sigma)$, which is the (left) fraction division algebra of the skew polynomial ring $D[t;\sigma]$.
We note that that this (left) fraction division algebra exists since skew polynomial rings are Ore domains (see \cite[Corollary~6.7]{Goodearl2004}).

\begin{lemma}\label{lem:counter-example_1}
Let $\sigma \in \mathbf{Aut}(D)$ be an automorphism.
Let $S=D[t,t^{-1};\sigma,\sigma^{-1}]$ be the subring generated by $t$ and $t^{-1}$ of the division algebra $D(t;\sigma)$.
Then $S$ is automorphically normalizable over $D$ if and only if $\sigma$ is of finite inner order.
\end{lemma}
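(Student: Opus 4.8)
The first step is to identify $S$ concretely. The elements $t$ and $t^{-1}$ are automorphic over $D$ with respect to $\sigma$ and $\sigma^{-1}$ (since $tb=\sigma(b)t$ and $t^{-1}b=\sigma^{-1}(b)t^{-1}$ for all $b\in D$), they commute, and they generate $S$; thus $S$ is the skew Laurent polynomial ring $\bigoplus_{i\in\mathbb{Z}}Dt^i$, free as a left $D$-module on $\{t^i\}_{i\in\mathbb{Z}}$, and in particular infinite-dimensional over $D$. The plan is to prove the two implications separately: the direction ``finite inner order $\Rightarrow$ normalizable'' by an explicit construction of a central variable, and the converse by a structural analysis of the automorphic elements of $S$ combined with the integrality argument already used in Proposition~\ref{Zari}.

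For the implication ``$\sigma$ of finite inner order $\Rightarrow$ $S$ normalizable'', write $\sigma^k=\mathbf{in}_c$ for some $k\geq 1$ and $c\in D^{\times}$. I would set $u=c^{-1}t^k$ and check directly that it centralizes $D$: indeed $ub=c^{-1}\sigma^k(b)t^k=c^{-1}(cbc^{-1})t^k=bu$. Its inverse $u^{-1}=ct^{-k}$ then also centralizes $D$, so $a=u+u^{-1}=c^{-1}t^k+ct^{-k}$ centralizes $D$. Since the unique top-degree term of $a^j$ is $c^{-j}t^{jk}$, the powers $1,a,a^2,\dots$ have pairwise distinct degrees and are left linearly independent over $D$, so $a$ is algebraically independent over $D$ and $D[a]\cong D[x]$ is a central polynomial ring. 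Finally I would show $S$ is finite over $D[a]$: from $a=u+u^{-1}$ one gets $u^2=au-1$, so $D[a,u]=D[a]\cdot 1+D[a]\cdot u$ is finite over $D[a]$ and contains $u^{-1}=a-u$; since $t^{qk}=c^qu^q\in D[a,u]$ for every $q\in\mathbb{Z}$, the set $\{1,t,\dots,t^{k-1}\}$ generates $S$ as a left $D[a,u]$-module. Hence $S$ is finite over $D[a]$ and therefore centrally, a fortiori automorphically, normalizable over $D$.

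For the converse I would argue by contraposition: assuming $\sigma$ of infinite inner order, suppose toward a contradiction that $S$ is automorphically normalizable, with commuting automorphic, algebraically independent $a_1,\dots,a_m\in S$ over which $S$ is finite; as $S$ is infinite-dimensional over $D$ we have $m\geq 1$, and each $a_i\neq 0$. The crucial structural claim is that every nonzero automorphic element $a=\sum_j\beta_j t^j\in S$ is a single monomial. Indeed, if $a$ is automorphic with respect to $\tau$, comparing the coefficients of each $t^j$ in $ab=\tau(b)a$ gives $\tau=\mathbf{in}_{\beta_j}\circ\sigma^j$ for every index $j$ with $\beta_j\neq 0$; two distinct such indices $j\neq j'$ would force $\sigma^{j-j'}$ to be inner, contradicting the infinite inner order of $\sigma$. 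Consequently each $a_i=\beta_i t^{j_i}$ is a unit of $S$. Now $R=D[a_1,\dots,a_m]$ is isomorphic to a skew polynomial ring in $m$ variables over $D$ (Lemma~\ref{lem:isom_to_skew}), in which the variable corresponding to $a_1$ has positive degree and is therefore not a unit; on the other hand $R$ is left Noetherian and $S$ is finite, hence left integral, over $R$ (\cite[Lemma~2.3]{ParanVo2023}), so $a_1^{-1}\in S$ is integral over $R$. Multiplying its integral equation by a suitable power of $a_1$ exactly as in the proof of Proposition~\ref{Zari} yields $a_1^{-1}\in R$, a contradiction.

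The main obstacle, and the conceptual heart of the argument, is the structural dichotomy in the converse: the inner order of $\sigma$ is exactly what controls whether $S$ possesses automorphic elements other than monomials. Once this is established, the failure of normalizability follows cleanly from the Zariski-type integrality argument, while in the finite-inner-order case the explicit transcendental central element $a=u+u^{-1}$ does the opposite work; the only remaining labor is the routine verification that $\{1,t,\dots,t^{k-1}\}$ is a finite generating set for $S$ over $D[a,u]$.
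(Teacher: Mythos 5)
Your proposal is correct and takes essentially the same route as the paper's proof: in the converse direction you establish the same key structural fact (every automorphic element of $S$ is a monomial $\beta t^j$, your coefficient comparison being exactly the paper's Lemma~\ref{lem:linearly_separable}) and conclude with the same integrality contradiction, while in the forward direction your central element $a=c^{-1}t^k+ct^{-k}$ is just $c$ times the paper's automorphic element $u=t^{-k}+c^{-2}t^k$. The only differences are minor refinements: your construction yields the stronger conclusion of central normalizability, and by applying the Zariski-type argument of Proposition~\ref{Zari} directly to the unit $a_1$ you bypass the paper's intermediate step showing that the number of generators must equal one.
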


\begin{proof}
Let us assume that $\sigma$ is of finite inner order, i.e. there exist a positive integer $k$ such that $\sigma^k=\mathbf{in}_c$, where $\mathbf{in}_c$ is the inner automorphism of $D$ with respect to a nonzero element $c \in D$. 
We will prove that $S$ is automorphically normalizable over $D$.
Indeed, let $u=t^{-k}+c^{-2}t^k$ be an element in $S$.
It is clear that $u$ is algebraically independent over $D$.
In addition, for every $b \in D$, we have
\begin{align*}
	ub&=t^{-k}b+c^{-2}t^kb=\sigma^{-k}(b)t^{-k}+c^{-2}\sigma^k(b)t^k\\
	&=c^{-1}bct^{-k}+c^{-1}bc^{-1}t^k 
	=c^{-1}bc(t^{-k}+c^{-2}t^k)
	=\mathbf{in}_{c^{-1}}(b)u.
\end{align*}
This means that $u$ is automorphic over $D$ with respect to the inner automorphism $\mathbf{in}_{c^{-1}}$.
Furthermore, we see that
$c^{-2}t^{2k}-ut^{k}+1=0$.
Thus, $t$ is left integral over $D[u]$, and $D[t,t^{-k}]=D[t,u]$ is finite over $D[u]$.
Since $S=D[t,t^{-1}]$ is finite over $D[t,t^{-k}]$, $S$ is also finite over $D[u]$.
Hence, $S$ is automorphically normalizable over $D$.

Next, let us assume that $\sigma$ is of infinite inner order.
We will prove that $S$ cannot be automorphically normalizable over $D$.
Assume that there exist a non-negative integer $d$ and commuting elements $a_1,\ldots,a_d$ in $S$ that are algebraically independent over $D$, automorphic over $D$ with respect to a commuting automorphisms $\sigma_1,\ldots,\sigma_d \in \mathbf{Aut}(D)$, and such that $S$ is finite over the subring $R=D[a_1,\ldots,a_d;\sigma_1,\ldots,\sigma_d]$, and $R$ is a skew polynomial ring over $D$.

First, we claim that an element $a$ in $S$ is automorphic over $D$ only if it is of the form $a=bt^k$ for some $b \in D$ and an integer $k$.
Indeed, as an element of $S$, $a$ must be of the form
$a = \sum_{k \in \mathbb{Z}} b_k t^k$
for some $b_k \in D$, where the sum is taken with finitely many non-zero coefficients.
Each nonzero summand $b_kt^k$ in this presentation of  $a$ is automorphic over $D$ with respect to the automorphism $\mathbf{in}_{b_k} \circ \sigma^k$. 
Now let us assume that $a$ is automorphic over $D$.
According to Lemma~\ref{lem:linearly_separable}, all nonzero summands appearing in $a$ must be automorphic over $D$ with respect to the same automorphism.
However, since $\sigma$ is of infinite inner order, we have $\mathbf{in}_{b_k} \circ \sigma^i \neq \mathbf{in}_{b_j} \circ  \sigma^j$ for all integers $i \neq j$ such that $b_i$ and $b_j$ are both nonzero.
Thus there can be only one non-zero summand in the given presentation of  $a$, that is $a=b_kt^k$ for some integer $k$, as claimed.

Thus, since $a_1,\ldots,a_d$ are automorphic over $D$, each $a_i$ is of the form $b_i t^{k_i}$ with $b_i \in D$. Then, on the one hand, $d$ must be nonzero, since $S$ is infinite over $D$. On the other hand, $d$ cannot be greater than $1$, since any two such nonzero terms $b_i t^{k_i},b_j t^{k_j}$ are algebraically dependent over $D$ (recall that $R=D[a_1,\ldots,a_d;\sigma_1,\ldots,\sigma_d]$ is a skew polynomial ring over $D$) . Thus $d=1$.

Suppose that $S$ is finite over $R=D[a_1]$. Write $a_1=bt^k \in S$ for some nonzero element $b \in D$ and a nonzero integer $k$. Without loss of generality, we assume further that $b=1$ and $k>0$.
Thus $R=D[t^k]$. Observe that $R$ is left Noetherian and $S$ is finite over $R$.
It follows from \cite[Lemma~2.2]{ParanVo2023} that $S$ is left integral over $R$.
In particular, $t^{-1}$ is left integral over $R$, i.e. there exist a positive integer $m>0$ and elements $f_0,\ldots,f_{m-1} \in R$ such that
\begin{align*}
	t^{-m}+f_{m-1}t^{-m+1}+\ldots+f_1t^{-1}+f_0=0.
\end{align*}
By multiplying both side by $t^{m-1}$, we obtain
\begin{align*}
	t^{-1} = - \left( f_{m-1}+\ldots+f_1t^{m-2}+f_0t^{m-1} \right)
\end{align*}
which is an element of $D[t]$, a contradiction.
Hence, $S$ cannot be finite over $R$, and so $S$ is not automorphically normalizable over $D$.
\end{proof}

\begin{proposition}\label{prop:counter-example-1}
If $\sigma \in \mathbf{Aut}(D)$ is of infinite inner order, then the tuple $(\sigma,\sigma^{-1})$ is not automorphically normalizable over $D$.
\end{proposition}

\begin{proof}
Straightforward from Lemma~\ref{lem:counter-example_1}.
%
\end{proof}

\begin{question} \label{q1} What can be said concerning normalizability of the tuple $(\sigma,\sigma^{-1})$ when $\sigma$ is of finite inner order?
Corollary~\ref{cor:normalization_cyclic_positive} gives a partial answer when $\sigma$ is of finite-order.
In the case where $\sigma$ is of finite inner order but not of finite order, the authors do not know.
\end{question}

\subsection{Second negative example}

To give our second negative example, we first need the following lemma.

\begin{lemma}\label{lem:counter-example2-pre}
Let $\sigma_1$ and $\sigma_2$ be two commuting automorphisms in $\mathbf{Aut}(D)$.
In the skew polynomial ring $T=D[t_1,t_2;\sigma_1,\sigma_2]$, let $I = (t_1t_2)$ be the two-sided ideal generated by $t_1t_2$.
Let $S=T/I$ be the quotient ring. Then every two commuting elements of $S$ are algebraically dependent over $D$.
\end{lemma}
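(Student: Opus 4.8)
The plan is to exploit the very rigid structure forced by collapsing $t_1t_2$ to $0$: as a left $D$-module $S$ splits into two ``axes'' whose product vanishes, so monomials in two elements can only grow \emph{linearly} along each axis, and a dimension count then forces an algebraic relation. First I would pin down the ideal $I=(t_1t_2)$ explicitly, claiming that it is precisely the left $D$-span of the monomials $t_1^it_2^j$ with $i\ge 1$ and $j\ge 1$. The inclusion $\supseteq$ is immediate from $t_1^it_2^j=t_1^{i-1}(t_1t_2)t_2^{j-1}$, while for $\subseteq$ one checks that this span is already a two-sided ideal --- it is closed under left and right multiplication by elements of $D$, by $t_1$, and by $t_2$, using that the variables commute and the twisting rules $t_i a=a^{\sigma_i}t_i$ --- and it contains $t_1t_2$; since $I$ is the smallest such ideal, equality follows. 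Consequently $S=T/I$ is free as a left $D$-module with basis $\{1\}\cup\{\bar t_1^{\,i}\}_{i\ge1}\cup\{\bar t_2^{\,j}\}_{j\ge1}$ (bars denoting images in $S$), with $\bar t_1\bar t_2=\bar t_2\bar t_1=0$. Writing $S=D\oplus S_1\oplus S_2$, where $S_1=\bigoplus_{i\ge1}D\bar t_1^{\,i}$ and $S_2=\bigoplus_{j\ge1}D\bar t_2^{\,j}$, one has $S_1S_2=S_2S_1=0$, and each of $S_1,S_2$ is a two-sided ideal.

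Next I would record the two quotient maps $\pi_1\colon S\to S/S_2\cong D[t_1;\sigma_1]$ and $\pi_2\colon S\to S/S_1\cong D[t_2;\sigma_2]$, which are ring homomorphisms, and note that $(\pi_1,\pi_2)$ is injective since $S_1\cap S_2=0$. Let $x,y\in S$ be commuting elements, and set $f_i=\pi_i(x)$, $g_i=\pi_i(y)$. Since each $D[t_i;\sigma_i]$ is a domain with an additive degree function (leading coefficients of skew polynomials over a division ring multiply to a nonzero element), we obtain $\deg \pi_i(x^ry^s)=\deg(f_i^rg_i^s)\le (r+s)\,c$, where $c=\max\bigl(\deg f_1,\deg g_1,\deg f_2,\deg g_2\bigr)$ (with $\deg 0:=0$). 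Therefore, for every $r,s$ with $0\le r,s\le N$, the monomial $x^ry^s$ lies in the free left $D$-submodule
$$M_N=D\cdot 1\ \oplus\ \bigoplus_{k=1}^{2Nc}D\bar t_1^{\,k}\ \oplus\ \bigoplus_{l=1}^{2Nc}D\bar t_2^{\,l},$$
whose left rank is $1+4Nc$.

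Finally I would run the dimension count. There are $(N+1)^2$ monomials $x^ry^s$ with $0\le r,s\le N$, all contained in $M_N$, a free left $D$-module of rank $1+4Nc$; since $D$ is a division ring, left dimension is well defined, and for $N$ sufficiently large $(N+1)^2>1+4Nc$. Hence these monomials are left $D$-linearly dependent, and a nontrivial relation $\sum_{r,s} c_{rs}\,x^ry^s=0$ exhibits $x$ and $y$ as algebraically dependent over $D$, which is exactly the claim. The only genuinely delicate point is the explicit description of $I$ (checking that $t_1t_2$ generates precisely the ``interior'' monomials and nothing of lower bidegree); the rest is the decisive structural observation that the relation $\bar t_1\bar t_2=0$ confines the powers $x^ry^s$ to a module that grows only \emph{linearly} in $N$, so that a quadratic number of them cannot remain independent. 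I do not anticipate a serious obstacle beyond carrying out the ideal computation carefully, and notably the argument is self-contained, requiring no separate one-variable algebraic-dependence result.
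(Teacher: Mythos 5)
Your proof is correct and is essentially the paper's argument: both rest on the observation that the left $D$-dimension of the degree-filtered pieces of $S$ grows only linearly (since only the two ``axes'' $\bar t_1^{\,i}$, $\bar t_2^{\,j}$ survive), while the number of monomials $x^ry^s$ grows quadratically, so a dimension count over the division ring $D$ forces a nontrivial left $D$-linear relation among the monomials. The only differences are presentational: you prove explicitly the module description of $I$ and the basis of $S$ (which the paper asserts without proof) and obtain the degree bounds via the projections onto $D[t_1;\sigma_1]$ and $D[t_2;\sigma_2]$, whereas the paper bounds degrees by lifting $x$ and $y$ to polynomials in $T$ and using total degree.
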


\begin{proof}
Let $z_1$ and $z_2$ be the images of $t_1$ and $t_2$ in $S$, respectively.
Then $z_1z_2=0$ and $S=D[z_1,z_2;\sigma_1,\sigma_2]$.
Assume that $x_1$ and $x_2$ are two arbitrary elements of $S$ which are the images of two skew polynomials $f_1$ and $f_2$ in $T$, respectively (i.e. $x_i=f_i+I$, $i=1,2$).
Let $d$ be the maximal total degrees of $f_1$ and $f_2$, and let $N$ be a positive integer.
We denote by $T_{dN}$ the set of polynomials in $T$ of total degree at most $dN$, and let $S_{dN}$ be the image of $T_{dN}$ in $S$.
We consider the following two sets:
$$\mathcal{A} = \{x_1^{k_1}x_2^{k_2} \,|\, 0 \leq k_1+k_2 \leq N\},$$
which is the set of all monomials in $x_1$ and $x_2$ of total degree at most $N$, thus is a subset of $S_{dN}$,
and 
$$\mathcal{B} = \{1,z_1,z_2,z_1^2,z_2^2,\ldots,z_1^{dN},z_2^{dN}\},$$
which is a left linear basis of $S_{dN}$ as a vector space over $D$.
If there are two pairs of non-negative integers $(i_1,i_2)$ and $(j_1,j_2)$ with $i_1+i_2,\,j_1+j_2 \leq dN$ such that $x_1^{i_1}x_2^{i_2}=x_1^{j_1}x_2^{j_2}$, then the two monomials $x_1^{i_1}x_2^{i_2}$ and $x_1^{j_1}x_2^{j_2}$ are linearly dependent over $D$.
Thus $x_1$ and $x_2$ is algebraically dependent over $D$.

Let us consider the case when $x_1^{i_1}x_2^{i_2} \neq x_1^{j_1}x_2^{j_2}$ for all pairs of nonnegative integers $(i_1,i_2) \neq (j_1,j_2)$ with $i_1+i_2,\,j_1+j_2 \leq dN$.
In this case, we have
\begin{align*}
\frac{|\mathcal{A}|}{|\mathcal{B}|} 
	&= \frac{\binom{N+2}{2}}{2dN+1} 
	= \frac{(N+2)(N+1)}{2(2dN+1)}
	\geq \frac{(N+2)(N+1)}{2(2d+1)N}
	\geq \frac{N+2}{2(2d+1)}.
\end{align*} 
Therefore, by selecting $N>4d$, we will have $|\mathcal{A}| > |\mathcal{B}|$, thus $\mathcal{A}$ is left linearly dependent over $D$.
Hence, $x_1$ and $x_2$ are algebraically dependent over $D$.
\end{proof}

We now have:

\begin{lemma}\label{lem:counter-example2}
Let $\sigma_1$ and $\sigma_2$ be two commuting automorphisms in $\mathbf{Aut}(D)$.
In the skew polynomial ring $T=D[t_1,t_2;\sigma_1,\sigma_2]$, let $I = (t_1t_2)$ be the two-sided ideal generated by $t_1t_2$.
Then the quotient ring $S=T/I$ is automorphically normalizable over $D$ if and only if $\sigma_1^{k_1} \circ \sigma_2^{-k_2}$ is an inner automorphism of $D$ for some positive integers $k_1,k_2$.
\end{lemma}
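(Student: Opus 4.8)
The plan is to convert automorphic normalizability of $S$ into the existence of a single, well-chosen automorphic generator, and then to read off the inner-automorphism condition from the shape of that generator. First I would fix notation: let $z_1,z_2$ be the images of $t_1,t_2$ in $S=T/I$, so that $z_1z_2=z_2z_1=0$, $S=D[z_1,z_2;\sigma_1,\sigma_2]$, and $\{1\}\cup\{z_1^k:k\ge1\}\cup\{z_2^k:k\ge1\}$ is a left $D$-basis of $S$. The crucial reduction comes from Lemma~\ref{lem:counter-example2-pre}: any two commuting elements of $S$ are algebraically dependent over $D$. Hence in any normalization in which $S$ is finite over $R=D[a_1,\dots,a_d;\dots]$ with the $a_i$ commuting and algebraically independent, one must have $d\le1$; and since $1,z_1,z_1^2,\dots$ are $D$-independent, $S$ is infinite over $D$, ruling out $d=0$. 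Thus $S$ is automorphically normalizable over $D$ if and only if there is a single $a\in S$, automorphic and algebraically independent over $D$, with $S$ finite over $D[a]$ (using Lemma~\ref{lem:isom_to_skew} to identify $D[a]$ with a one-variable skew polynomial ring).

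For the ``if'' direction, assume $\sigma_1^{k_1}\circ\sigma_2^{-k_2}=\mathbf{in}_c$ with $k_1,k_2>0$ and $c\ne0$, and set $a=z_1^{k_1}+cz_2^{k_2}$. A direct computation, using $\sigma_1^{k_1}=\mathbf{in}_c\circ\sigma_2^{k_2}$, shows $a$ is automorphic over $D$ with respect to $\pi=\sigma_1^{k_1}$. Because $z_1z_2=z_2z_1=0$, every mixed term in a power of $a$ vanishes, so $a^n=z_1^{nk_1}+d_nz_2^{nk_2}$ with $d_n\ne0$; comparing basis vectors shows the $a^n$ are $D$-independent, so $a$ is algebraically independent. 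Finally, writing $w=z_1^{k_1}$, the same vanishing gives $aw=wa=w^2$, that is $w^2-aw=0$, so $w$ is left integral over $D[a]$; then $z_1$ (a root of $X^{k_1}-w$) and $z_2$ (a root of $X^{k_2}-c^{-1}(a-w)$) are integral over $D[a]$, whence $S=D[a][z_1,z_2]$ is finite over $D[a]$ and Definition~\ref{def:auto_normalizability} applies.

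For the ``only if'' direction, take the single automorphic, algebraically independent $a$ and write $a=b_0+\sum_{k\ge1}b_kz_1^k+\sum_{m\ge1}c_mz_2^m$. I would first argue that $a$ must contain a nonzero $z_1$-power and a nonzero $z_2$-power: if, say, no $z_2$-power occurs, then $D[a]\subseteq D[z_1]$, and a monic relation for $z_2$ over $D[z_1]$ is impossible, since $z_1^jz_2^i=0$ for $j,i\ge1$ forces the leading $z_2$-term to survive — this contradicts the fact that $S$ finite over $D[a]$ makes $z_2$ integral over $D[a]$ (via left Noetherianity of $D[a]$ and \cite[Lemma~2.2]{ParanVo2023}). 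With both a nonzero term $b_kz_1^k$ and a nonzero term $c_mz_2^m$ present, these are $D$-linearly independent as distinct basis vectors, so Lemma~\ref{lem:linearly_separable} forces them to be automorphic with respect to the same automorphism $\pi$, giving $\mathbf{in}_{b_k}\circ\sigma_1^k=\mathbf{in}_{c_m}\circ\sigma_2^m$. Composing on the left by $\mathbf{in}_{b_k}^{-1}$ and on the right by $\sigma_2^{-m}$ yields $\sigma_1^k\circ\sigma_2^{-m}=\mathbf{in}_{b_k^{-1}c_m}$, an inner automorphism, which is precisely the claimed condition with $k_1=k$ and $k_2=m$.

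The step I expect to be the main obstacle is the non-integrality argument in the ``only if'' direction that forces $a$ to involve both $z_1$ and $z_2$: the degeneracy $z_1z_2=0$ makes $D[z_1]$ and $D[z_2]$ behave like disjoint coordinate axes meeting only in $D$, and one must exploit this carefully to show that neither variable can be integral over a subring contained in the other. Once this is secured, Lemma~\ref{lem:linearly_separable} together with the uniqueness of the defining automorphism does the remaining work essentially mechanically.
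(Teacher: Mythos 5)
Your proposal is correct and follows essentially the same route as the paper's proof: the same normalizing element $z_1^{k_1}+cz_2^{k_2}$ in the ``if'' direction, and the same combination of Lemma~\ref{lem:counter-example2-pre}, Lemma~\ref{lem:linearly_separable} and integrality via \cite[Lemma~2.2]{ParanVo2023} in the ``only if'' direction. The differences are only cosmetic: you prove the ``only if'' direction directly rather than by contraposition (using the integrality/degeneracy argument first to force both a $z_1$-power and a $z_2$-power into the generator, where the paper instead uses the non-inner hypothesis first to force the generator into $D[z_1]$ or $D[z_2]$), and you spell out the algebraic independence of the normalizing element, which the paper dismisses as clear.
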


\begin{proof}
Let $z_1$ and $z_2$ be the images of $t_1$ and $t_2$ in $S$, respectively.
Then $z_1z_2=0$ and $S=D[z_1,z_2;\sigma_1,\sigma_2]$.
We consider the following two cases.

\noindent
\textbf{Case 1}: $\sigma_1^{k_1} \circ \sigma_2^{-k_2}$ is an inner automorphism of $D$ for some positive integers $k_1,k_2$.
	This means that $\sigma_1^{k_1} \circ \sigma_2^{-k_2}=\mathbf{in}_c$ for some nonzero element $c \in D$.
	We need to prove that $S$ is automorphically normalizable over $D$.
	Indeed, set $u=z_1^{k_1}+cz_2^{k_2}$.
	Then, it is clear that $u$ is algebraically indepedent over $D$.
	In addition, for every $r \in D$, we have
	\begin{align*}
		ur=z_1^{k_1}r+cz_2^{k_2}r
		&=\sigma_1^{k_1}(r) z_1^{k_1} + c \sigma_2^{k_2}(r)z_2^{k_2}
		=\sigma_1^{k_1}(r) z_1^{k_1} + \mathbf{in}_c \circ \sigma_2^{k_2}(r) \cdot c z_2^{k_2}\\
		&=\sigma_1^{k_1}(r)(z_1^{k_1}+cz_2^{k_2})
		=\sigma_1^{k_1}(r)u.
	\end{align*}
	Therefore, $u$ is automorphic over $D$ with respect to $\sigma_1^{k_1}$.
	Furthermore, since $z_1^{k_1+1}=uz_1$, $z_1$ is left integral over $D[u]$.
	As a consequence, $D[z_1,z_2^{k_2}] = D[z_1,cz_2^{k_2}] = D[z_1,u]$ is finite over $D[u]$.
	Since $S=D[z_1,z_2]$ is finite over $D[z_1,z_2^{k_2}]$, $S$ is also finite over $D[u]$.
	Hence, $S$ is automorphically normalizable over $D$.

    \noindent
    \textbf{Case 2}: $\sigma_1^{k_1} \circ \sigma_2^{-k_2}$ is not an inner automorphism of $D$ for arbitrary positive integers $k_1,k_2$.
	We will prove that $S$ is not automorphically normalizable over $D$.
	Assume not, then according to Lemma~\ref{lem:counter-example2-pre}, there exist an automorphic element $x \in S$ such that $x$ is algebraically independent over $D$ and that $S$ is finite over $D[x]$.
	
	As an element of $S$, $x$ can be written in the form
	\begin{align}\label{eq:counter-example2}
		x=a_0 + \sum\limits_{i \geq 1}a_iz_1^i + \sum\limits_{j \geq 1} b_jz_2^j,
	\end{align}
	where the sums are finite and the nonzero terms are left linearly independent over $D$.
	The terms $a_iz_1^i$ and $b_jz_2^j$ are automorphic over $D$ with respect to the automorphisms $\mathbf{in}_{a_i} \circ \sigma_1^i$ and $\mathbf{in}_{b_j} \circ \sigma_2^{j}$, respectively.
	Observe that $\mathbf{in}_{a_i} \circ \sigma_1^i \neq \mathbf{in}_{b_j} \circ \sigma_2^{j}$ for all positive integers $i,j$ and nonzero elements $a_i,b_j$ of $D$.
	Otherwise, $\sigma_1^i \circ \sigma_2^{-j} = \mathbf{in}_{a_i^{-1}b_j}$ is an inner automorphism of $D$, which is a contradiction.
	This means that, in Eq.~\eqref{eq:counter-example2}, no nonzero term in the first sum is automorphic over $D$ with respect to the same automorphism with any nonzero term of the second sum.
	Therefore, as a consequence of Lemma~\ref{lem:linearly_separable}, in order $x$ to be automorphic over $D$, one of the two sums in Eq.~\eqref{eq:counter-example2} is zero.
	Thus, either $x \in D[z_1]$ or $x \in D[z_2]$.
	
	Without loss of generality, let us assume that $x \in D[z_1]$.
	Then $D[x] \subseteq D[z_1]$.
	Since $S$ is finite over $D[x]$, $S$ is also finite over $D[z_1]$.
	According to \cite[Lemma~2.2]{ParanVo2023}, $S$ is left integral over $D[z_1]$.
	In particular, $z_2$ is left integral over $D[z_1]$, i.e. there exist a positive integer $d$ and elements $f_0,\ldots,f_{d-1} \in D[z_1]$ such that
	\begin{align*}
		z_2^{d}+f_{d-1}z_2^{d-1} + \ldots + f_1z_2+f_0=0.
	\end{align*}
	Since $z_1z_2=0$, by replacing $f_1,\ldots,f_{d-1}$ by their free coefficients, we can assume that $f_1,\ldots,f_{d-1} \in D$.
	Then 
	$$f_0=-(z_2^{d}+f_{d-1}z_2^{d-1} + \ldots + f_1z_2)$$
	is an element in $D[z_1] \cap D[z_2]=D$.
	Thus, $z_2$ is left integral over $D$, which is impossible.
	Hence, $S$ is not automorphically normalizable over $D$.
\end{proof}

\begin{proposition}\label{prop:counter-example2}
Let $\sigma_1$ and $\sigma_2$ be two commuting automorphisms in $\mathbf{Aut}(D)$.
If $\sigma_1^{k_1} \circ \sigma_2^{-k_2}$ is not an inner automorphism for every positive integers $k_1,k_2$, then the tuple $(\sigma_1,\sigma_2)$ is not automorphically normalizable over $D$.
\end{proposition}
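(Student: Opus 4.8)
The plan is to apply Lemma~\ref{lem:counter-example2} directly, since this proposition is essentially its ``only if'' direction repackaged as a statement about tuples. First I would recall the definition of automorphic normalizability for a tuple (Definition~\ref{def:sigma-normalizability}): the tuple $(\sigma_1,\sigma_2)$ is automorphically normalizable over $D$ precisely when \emph{every} quotient ring of $D[t_1,t_2;\sigma_1,\sigma_2]$ by a proper two-sided ideal is automorphically normalizable over $D$. Hence, to show that $(\sigma_1,\sigma_2)$ is \emph{not} automorphically normalizable, it suffices to exhibit a single such quotient that fails to be normalizable.

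The natural candidate is the quotient $S = T/I$ with $T = D[t_1,t_2;\sigma_1,\sigma_2]$ and $I = (t_1t_2)$, studied in Lemma~\ref{lem:counter-example2}. I would first check that $I$ is a proper two-sided ideal with $D \cap I = \{0\}$: the ring $T$ is graded by total degree (with $D$ in degree $0$ and each $t_i$ in degree $1$, since the relations $t_ia = \sigma_i(a)t_i$ preserve this grading), and $t_1t_2$ is homogeneous of degree $2$. Thus every element of $I$ has all its monomials of total degree at least $2$, so $I$ contains no nonzero scalar. Consequently $D$ embeds into $S$, and $S = D[z_1,z_2;\sigma_1,\sigma_2]$ is a genuine automorphically finitely generated extension of $D$.

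Finally, the hypothesis of the proposition — that $\sigma_1^{k_1}\circ\sigma_2^{-k_2}$ is not inner for any positive integers $k_1,k_2$ — is exactly the negation of the equivalent condition in Lemma~\ref{lem:counter-example2} under which $S$ is automorphically normalizable. Therefore this particular $S$ is not automorphically normalizable over $D$, and by the definition recalled above the tuple $(\sigma_1,\sigma_2)$ is not automorphically normalizable over $D$. I do not expect any real obstacle here: all the substantive work — producing the hypothetical automorphic, algebraically independent element $x$, using Lemma~\ref{lem:linearly_separable} to force $x \in D[z_1]$ or $x \in D[z_2]$, and deriving the integrality contradiction for $z_2$ (resp. $z_1$) over $D[z_1]$ (resp. $D[z_2]$) — is already carried out in the proof of Lemma~\ref{lem:counter-example2}, so the proposition is a direct corollary.
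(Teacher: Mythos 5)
Your proposal is correct and matches the paper's own proof, which simply cites Lemma~\ref{lem:counter-example2}: the quotient $S = D[t_1,t_2;\sigma_1,\sigma_2]/(t_1t_2)$ fails to be automorphically normalizable under the hypothesis, so the tuple is not automorphically normalizable by Definition~\ref{def:sigma-normalizability}. Your additional check that $(t_1t_2)$ is a proper ideal meeting $D$ trivially (via the grading) is a nice touch the paper leaves implicit, but otherwise the two arguments are identical.
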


\begin{proof}
Straightforward from Lemma~\ref{lem:counter-example2}.
\end{proof}

\begin{question}\label{q2}
What can be said concerning the normalizability of the tuple $(\sigma_1,\sigma_2)$ if $\sigma_1^{k_1} \circ \sigma_2^{-k_2}$ is an inner automorphism for some positive integers $k_1,k_2$?
Corollary~\ref{cor:normalization_cyclic_positive} gives a partial answer for the case where $\sigma_1^{k_1} \circ \sigma_2^{-k_2}=\mathbf{id}_D$.
\end{question}

In the case where $D=F$ is a field, we give a necessary and sufficient condition for a tuple of commuting automorphisms in $\mathbf{Aut}(F)$ to be automorphically normalizable in the following theorem (Theorem \ref{fields_iff} of the introduction).

\begin{theorem}\label{thm:notrivial_case}
Let $F$ be a field and $\sigma_1,\ldots,\sigma_n$ be commuting automorphisms in $\mathbf{Aut}(F)$ such that $\cap_{i=1}^n F_{\sigma_i}$ is infinite.
Then the tuple $(\sigma_1,\ldots,\sigma_n)$ is automorphically normalizable over $F$ if and only if $\sigma_1^{d_1}=\ldots=\sigma_n^{d_n}$ for some positive integers $d_1,\ldots,d_n$.
\end{theorem}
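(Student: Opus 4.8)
The \emph{if} direction is immediate from what has already been established. Suppose $\sigma_1^{d_1}=\ldots=\sigma_n^{d_n}$ for some positive integers $d_1,\ldots,d_n$. Since $F$ is a field we have $\mathcal{Z}(F)=F$, so the set $\mathcal{Z}(F)\cap\big(\cap_{i=1}^n F_{\sigma_i}\big)$ coincides with $\cap_{i=1}^n F_{\sigma_i}$, which is infinite by hypothesis. Hence Corollary~\ref{cor:normalization_cyclic_positive} applies and shows that $(\sigma_1,\ldots,\sigma_n)$ is automorphically normalizable over $F$. Thus the real content of the theorem lies in the \emph{only if} direction, which I would prove by reducing to the two-variable negative example already in hand.

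For the \emph{only if} direction, assume that $(\sigma_1,\ldots,\sigma_n)$ is automorphically normalizable over $F$. The first step is to pass to pairs. By Proposition~\ref{prop:sigma-prolong} together with the order-independence of normalizability recorded in Remark~\ref{rem:normalizability}, the non-normalizability of any subtuple would force the non-normalizability of the whole tuple; hence every subtuple of $(\sigma_1,\ldots,\sigma_n)$ is automorphically normalizable over $F$. In particular, for each pair of indices $i\neq j$ the tuple $(\sigma_i,\sigma_j)$ is automorphically normalizable, so by Definition~\ref{def:sigma-normalizability} the specific quotient ring $F[t_1,t_2;\sigma_i,\sigma_j]/(t_1t_2)$ is automorphically normalizable over $F$. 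Applying Lemma~\ref{lem:counter-example2} to this quotient yields positive integers $k,l$ for which $\sigma_i^{k}\circ\sigma_j^{-l}$ is inner. But $F$ is a field, so its only inner automorphism is the identity; therefore $\sigma_i^{k}=\sigma_j^{l}$.

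It remains to upgrade these pairwise relations to a single common power. Applying the previous step to the pairs $(\sigma_1,\sigma_j)$ for $j=2,\ldots,n$ produces positive integers $a_j,b_j$ with $\sigma_1^{a_j}=\sigma_j^{b_j}$. Set $d_1=\mathrm{lcm}(a_2,\ldots,a_n)$ and, for each $j\geq 2$, set $d_j=b_j\,(d_1/a_j)$, which is a positive integer since $a_j\mid d_1$. Then
$$\sigma_1^{d_1}=\big(\sigma_1^{a_j}\big)^{d_1/a_j}=\big(\sigma_j^{b_j}\big)^{d_1/a_j}=\sigma_j^{d_j}$$
for every $j\geq 2$, so $\sigma_1^{d_1}=\sigma_2^{d_2}=\ldots=\sigma_n^{d_n}$, as required. (For $n=1$ the claimed relation is vacuous, and normalizability of the single tuple $(\sigma_1)$ is already guaranteed by Theorem~\ref{thm:normalization_one_auto}.)

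The genuinely hard part of this argument — producing a non-normalizable quotient ring whenever the requisite power relation fails — is precisely the content of Lemma~\ref{lem:counter-example2}, which I may invoke. Given that lemma, the remaining work is organizational: the reduction to pairs via Proposition~\ref{prop:sigma-prolong} and Remark~\ref{rem:normalizability}, the observation that in a field ``inner'' means ``identity,'' and the elementary $\mathrm{lcm}$ bookkeeping that merges the pairwise identities into one common power. I would note, finally, that the infinitude hypothesis on $\cap_{i=1}^n F_{\sigma_i}$ is used only in the \emph{if} direction (through Corollary~\ref{cor:normalization_cyclic_positive}), whereas the \emph{only if} direction does not require it.
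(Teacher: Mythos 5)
Your proposal is correct and follows essentially the same route as the paper's own proof: both directions reduce to pairs via Proposition~\ref{prop:sigma-prolong} and Remark~\ref{rem:normalizability}, invoke the negative example (you cite Lemma~\ref{lem:counter-example2} directly, the paper its corollary Proposition~\ref{prop:counter-example2}), use that every inner automorphism of a field is the identity, and finish with arithmetic bookkeeping. The only cosmetic difference is that you take $d_1$ to be an $\mathrm{lcm}$ of the exponents where the paper takes their product; both choices work for exactly the same reason.
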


\begin{proof}
The ``if" part is a direct consequence of Corollary~\ref{cor:normalization_cyclic_positive}.
For the ``only if" part, let us assume that the tuple $(\sigma_1,\ldots,\sigma_n)$ is automorphically normalizable over $F$.
According to Proposition~\ref{prop:sigma-prolong} and Remark~\ref{rem:normalizability}, the shorter tuple $(\sigma_1,\sigma_i)$ is also automorphically normalizable over $F$ for every $i=2,\ldots,n$.
Then it follows from Proposition~\ref{prop:counter-example2} that there exist positive integers $k_{i,1}$ and $k_{i,2}$ such that $\sigma_1^{k_{i,1}} \circ \sigma_i^{-k_{i,2}}$ is an inner automorphism.
However, every inner automorphism on $F$ is the identity map.
Therefore, $\sigma_1^{k_{i,1}} \circ \sigma_i^{-k_{i,2}} = \mathbf{id}_F$, or equivalently, $\sigma_1^{k_{i,1}} = \sigma_i^{k_{i,2}}$, for all $i=2,\ldots,n$.
Set $d_1=\prod_{i=1}^{n}k_{i,1}$, then we have $\sigma_1^{d_1} = \sigma_i^{\frac{d_1k_{i,2}}{k_{i,1}}}$.
Thus, for each $i=2,\ldots,n$, if we set $d_i=\frac{d_1k_{i,2}}{k_{i,1}}$, then $d_1,\ldots,d_n$ are positive integers and
$\sigma_1^{d_1}=\ldots=\sigma_{n}^{d_n}$
as expected.
\end{proof}

\begin{question}\label{q3}
Can one find a necessary and sufficient condition for the above theorem to hold, when the field $F$ is replaced by a division algebra? 
\end{question}

\section*{Acknowledgement}

The authors wish to express their sincere thanks to the anonymous referee for valuable suggestions that 
improved the final version of the manuscript.

\end{document}